\newcommand{\comment}[1]{}
\def\m{\medskip}
\def \t{\theta}
\newcommand{\defital}{\textit}
\newcommand{\Aut}{\text{Aut}}
\newcommand{\cT}{\mathcal{T}}
\newcommand{\hM}{\hat{M}}
\newcommand{\tM}{\tilde{M}}
\newcommand{\tp}{\tilde{\phi}}
\newcommand{\tcT}{\tilde{\mathcal{T}}}
\renewcommand{\so}{\mathscr{O}}
\newcommand{\w}{\omega}
\newtheorem{thm}{Theorem}[section]
\newtheorem{lem}[thm]{Lemma}
\newtheorem{prop}[thm]{Proposition}
\theoremstyle{definition}
\newtheorem{remark}[thm]{Remark}
\newtheorem{defn}[thm]{Definition}
\newtheorem{example}[thm]{Example}
\theoremstyle{remark}
\providecommand*{\propertyautorefname}{Property}
\let\oldmarginpar\marginpar
\renewcommand\marginpar[1]{\oldmarginpar[\raggedleft\footnotesize #1]%
{\raggedright\footnotesize #1}}
\renewcommand{\emph}{\textit}
\begin{document}
\begin{frontmatter}
\date{\today}

\title{General Equitable Decompositions for Graphs with Symmetries}


\author[amanda]{Amanda Francis}

\address[amanda]{{Mathematical Reviews, American Mathematical Society, Ann Arbor, MI 48103, USA, aefr@umich.edu}}

\author[dallas]{Dallas Smith}

\address[dallas]{Department of Mathematics, Brigham Young University, Provo, UT 84602, USA, dallas.smith@mathematics.byu.edu }

\author[ben]{Benjamin Webb}

\address[ben]{Department of Mathematics, Brigham Young University, Provo, UT 84602, USA, bwebb@mathematics.byu.edu}


\begin{abstract}
Using the theory of equitable decompositions it is possible to decompose a matrix $M$ appropriately associated with a given graph. The result is a collection of smaller matrices whose collective eigenvalues are the same as the eigenvalues of the original matrix $M$. This is done by decomposing the matrix over a graph symmetry. Previously it was shown that a matrix can be equitably decomposed over any uniform, basic, or separable automorphism. Here we extend this theory to show that it is possible to equitably decompose a matrix over any automorphism of a graph, without restriction. Moreover, we give a step-by-step procedure which can be used to generate such a decomposition.
We also prove under mild conditions that if a matrix $M$ is equitably decomposed the resulting divisor matrix, which is the divisor matrix of the associated equitable partition, will have the same spectral radius as the original matrix $M$.
\end{abstract}

\begin{keyword}

Equitable Partition\sep Automorphism \sep Graph Symmetry \sep Spectral Radius\\

AMS Classification: 05C50

\end{keyword}



\end{frontmatter}

\section{Introduction}

Spectral graph theory considers the relationship between the structure of a graph $G$ and its spectral properties. These {spectral properties} are typically the eigenvalues and eigenvectors of a matrix $M$ associated with the graph. The particular structures we consider here are graph symmetries. A graph {symmetry} is a permutation $\phi$ of the graph's vertices $V(G)$ that preserves the graph's (weighted) adjacencies (an \emph{automorphism} of $G$).

In our previous work \cite{BFW,FSSW} we showed that if a graph $G$ has a particular type of automorphism $\phi$ then it is possible to decompose any matrix $M$ that respects the structure of $G$ into a number of smaller matrices $M_{\phi},B_1,\dots,B_{k-1}$ in a way that preserves the eigenvalues of $M$, i.e.,
\begin{equation*}\label{eq:first}
\sigma(M)=\sigma(M_{\phi})\cup\sigma(B_1)\cup\cdots\cup\sigma(B_{k-1}),
\end{equation*}
so that the collective eigenvalues of the smaller matrices are the eigenvalues of $M$.
This method of decomposing a matrix over a graph symmetry is referred to as an \emph{equitable decomposition} due to its connection with the theory of equitable partitions.

An equitable partition is a particular partition of the graph's vertices that can be used to create, from the graphs adjacency matrix $A$,  a smaller matrix $A_{\phi}$ whose eigenvalues are a subset of the spectrum of $A$ (see Definition \ref{def:EP}). As it turns out, the orbits of any graph symmetry give an equitable partition, although the converse does not hold (see Theorem 9.3.3 of \cite{Godsil} and Theorem 3.9.5 of \cite{cvet}).

In \cite{BFW} both equitable partitions and equitable decompositions are defined for matrices beyond the adjacency matrix of a graph, including the  various Laplacian matrices, distance matrices, etc. (see Proposition 3.4).  This class of matrices, referred to as \emph{automorphism compatible} matrices, are those matrices that respect the symmetries of a graph $G$ ($M_{\phi(i), \phi(j)} = M_{i,j}$ for any $\phi \in \Aut(G)$). Importantly, the matrix $M_{\phi}$ in the resulting equitable decomposition is the same as the matrix that results from an equitable partition of $G$, using the orbits of $\phi$.

The particular types of automorphisms for which equitable decompositions are defined are the so-called uniform, basic, and separable automorphisms. A \emph{uniform} automorphism $\phi$ is an automorphism in which all orbits of the automorphism have the same cardinality (see \cite{BFW}). A \emph{basic} automorphism $\phi$ is an automorphism for which all orbits of size greater than one have the same cardinality (see \cite{BFW}). A \emph{separable} automorphism is an automorphism whose order is the product of distinct primes (see \cite{FSSW}).

Since many graph automorphisms are neither uniform, basic, nor separable, a natural question is whether an automorphism compatible matrix $M$ can be equitably decomposed over other automorphisms. The major contribution of this paper is answering this question in the affirmative (see Theorem \ref{thm:gendecomp}). That is, the theory of equitable decompositions can be extended to \emph{any} automorphism of a graph $G$.

As an intermediate step to proving this result, we show in Theorem \ref{thm:fullprimedecomp} that an automorphism compatible matrix $M$ can be equitably decomposed over any prime-powered automorphism (an automorphism $\phi$ with $|\phi| = p^N$ for some prime $p$ and  $N\geq 1$). We use this result to give an algorithm for decomposing $M$ over a general automorphism $\phi$ of order
\[
|\phi|=p_1^{N_1}p_2^{N_2}\cdots p_{h}^{N_{h}} \ \ \text{for distinct primes} \ \ p_1,p_2,\dots, p_{h};
\]
by sequentially decomposing over $h$ prime-powered automorphisms, corresponding to $p_1, \ldots, p_h$. The result is the equitable decomposition
\[
M_{\phi}\oplus B_1\oplus \cdots \oplus B_{k}
\]
of $M$ over $\phi$  where, as in any equitable decomposition, the collective eigenvalues of the smaller matrices are the eigenvalues of the original matrix $M$.


Last, we demonstrate that for any automorphism compatible matrix $M$ and automorphism $\phi$ the divisor matrix $M_{\phi}$ and the matrix $M$ have the same spectral radius if $M$ is both nonnegative and irreducible (see Proposition \ref{lem:3}). As this holds for any automorphism $\phi$ this extends the result of \cite{FSSW} in which this  was shown to hold for both basic and separable automorphisms.

It is worth emphasizing that an equitable decomposition of $M$ does not require any knowledge of the matrix' eigenvalues or eigenvectors, as opposed to a spectral decomposition (diagonalization) of $M$. Only knowledge of a symmetry is needed. {The surprising result is that  if an automorphism involves only part of the graph (a \emph{local} symmetry) this local information can be used to determine properties of the associated eigenvalues
and  {eigenvectors},
which in general depend on the entire graph structure.

This method of using local symmetries to determine spectral properties of a graph is perhaps most useful in analyzing the spectral properties of real-world networks since many of these have a high degree of symmetry \cite{MacArthur} when compared, for instance, to randomly generated graphs \cite{Aldous2000,Newman10,Strogatz03,Watts99}. From a practical point of view, the large size of these networks limit our ability to quickly compute their associated eigenvalues and eigenvectors, which are used in a number of standard network metrics and algorithms \cite{Newman10}. However, their high degree of symmetry suggests that it may be possible to effectively estimate a network's spectral properties by equitably decomposing the network over local symmetries, which is a potentially much more feasible task (see Examples 5.3 and 5.4 from \cite{FSSW}).

This paper is organized as follows. In Section \ref{sec:ep} we summarize the theory of equitable decompositions found in \cite{BFW}. In Section \ref{sec:pp} we describe how the theory of equitable decompositions can be extended to prime-powered automorphisms. We use this in Section \ref{GED} to extend the theory of equitable decompositions to any automorphism. We also present algorithms describing how an automorphism compatible matrix can be equitable decomposed over any prime-powered automorphism (in Section \ref{sec:pp} ) and general automorphism (in Section \ref{GED}). In Section \ref{sec:extapp} we prove that
the original matrix and its divisor matrix have the same spectral radius. Section {\ref{sec:conc}} contains some closing remarks including a few open questions regarding equitable decompositions.

\section{Graph Symmetries and Equitable Decompositions}\label{sec:ep}

The main objects considered in this paper are matrices and graphs. A \emph{graph} $G$ is made up of a finite set of vertices $V=V(G)=\{1,\dots,n\}$ and a finite set of edges $E=E(G)$. A graph can be \emph{undirected}, meaning that each edge $(i,j)\in E$ can be thought of as an unordered pair so that $(i,j)=(j,i)$. A graph is \emph{directed} when each edge is \emph{directed}, in which case $(i,j)\in E$ is an ordered pair where it is not necessarily true that if $(i,j)\in E$ that $(j,i)\in E$. In both directed and undirected graphs, a \emph{loop} is an edge with only one vertex, i.e. $(i,i)\in E$. A \emph{weighted graph} is a graph, either directed or undirected, in which each edge $(i,j)\in E$ is assigned a numerical weight $w(i,j)$.

As a major goal of this paper is to understand the relationship between the structure of a graph, specifically its symmetries, and its spectral properties we need a way to associate a matrix with a graph. In practice there are a number of matrices that may be associated with a given graph $G$. One of the most common is the adjacency matrix $A=A(G)$ given by 
\[
A_{i,j}=
\begin{cases}
1 &\text{if} \ \ (i,j)\in E(G)\\
0 &\text{otherwise}.
\end{cases}
\]


For an $n\times n$ matrix $M$ associated with a graph $G$ we let $\sigma(M)$ denote the \emph{eigenvalues} of $M$. For us $\sigma(M)$ is a multiset with each eigenvalue in $\sigma(M)$ listed according to its multiplicity. To simplify our discussion we will often refer to $\sigma(M)$ as the eigenvalues of the graph $G$ when the context makes it clear that the matrix $M$ is associated with $G$.

As previously mentioned the specific type of structures we consider here are graph symmetries. 
Such graph symmetries are formally described by the graph's set of automorphisms.

\begin{defn} \textbf{(Graph Automorphism)}
An \emph{automorphism} $\phi$ of an unweighted graph $G$ is a bijection $\phi: V(G) \to V(G)$ such that $(i,j)$ is in $E(G)$ if and only if $(\phi(i), \phi(j))$ is in $E(G)$. For a weighted graph $G$, if $w(i,j) = w(\phi(i), \phi(j))$ for each pair of vertices $i$ and $j$, then $\phi$ is an automorphism of $G$.
\end{defn}

The set of all automorphisms of $G$ is a group, denoted by $\Aut(G)$. The \emph{order} $|\phi|$ of $\phi\in\Aut(G)$ is the smallest positive integer $\ell$ such that $\phi^\ell$ is the identity map on $V(G)$. For a graph $G$ with automorphism $\phi$, we define the relation $\sim$ on $V(G)$ by $u \sim v$ if and only if $v = \phi^j(u)$ for some nonnegative integer $j$. Then $\sim$ is an equivalence relation on $V(G)$, and the equivalence classes are called the \emph{orbits} of $\phi$. We denote the orbit associated with the vertex $i$ by $\so_\phi(i)$ whose {length} $|\so_\phi(i)|$ is the size of the orbit.

Here, as in \cite{BFW} and \cite{FSSW} we consider those matrices $M$ associated with a graph $G$ whose structure mimics the symmetries of the graph.

\begin{defn}\label{def:autocomp}\textbf{(Automorphism Compatible)}
Let $G$ be a graph on $n$ vertices. An $n \times n$ matrix $M$ is \emph{automorphism compatible} on $G$ if, given any automorphism $\phi$ of $G$ and any $i, j \in \{1, 2, \ldots, n\}$,
$M_{\phi(i), \phi(j)} = M_{i, j}$.
\end{defn}

Some of the most well-known matrices that are associated with a graph are automorphism compatible. This includes the adjacency matrix, combinatorial Laplacian matrix, signless Laplacian matrix, normalized Laplacian matrix, and distance matrix of a simple graph. Additionally, the weighted adjacency matrix of a weighted graph is automorphism compatible. (See Proposition 3.4, \cite{BFW}.)

Since the symmetries of a graph can be found in the structure of any automorphism compatible matrix it is also possible to talk about the symmetries or automorphisms of a matrix. That is, $\phi$ is an automorphism of an $n\times n$ matrix $M$, as in Definition \ref{def:autocomp}, if $M_{\phi(i) ,\phi(j)} = M_{i, j}$ for any $i, j \in \{1, 2, \ldots, n\}$. We let $\Aut(M)$ denote the automorphism group of $M$.

Here we state a generalization of the theory of equitable partitions, originally given in \cite{BFW}.

\begin{defn}\label{def:EP}\textbf{(Equitable Partition)}
An \emph{equitable partition} of a graph $G$ and a matrix $M$ associated with $G$, is a partition $\pi$ of $V(G)$ into $V(G) = V_1 \cup \ldots \cup V_k$, which has the property that for all $i$, $j \in \{1, 2, \ldots, k\}$
\begin{equation}\label{eq:1}
\sum_{t \in V_j} M_{s,t} = D_{i,j}
\end{equation}
is a constant $D_{i,j}$ for any  $s \in V_i$. The $k \times k$ matrix $M_\pi = D
$ is called the \defital{divisor matrix} of $M$ associated with the equitable partition $\pi$.
\end{defn}

Note for {simple graphs} a partition $\pi$ is an equitable partition if and only if any vertex $\ell \in V_i$ has the same number of neighbors in $V_j$ for all $i,j \in \{1, \ldots, k\}$ (for example, see p. 195-6 of  \cite{Godsil}).
Further, if $\phi$ is any automorphism of $G$ and $M$ is a matrix associated with $G$ that is compatible with $\phi$, the orbits of $\phi$ form an equitable partition of $V(G)$ (see Proposition 3.2, \cite{BFW}).

In \cite{BFW} the following special case of an equitable partition is considered. Suppose the nontrivial orbits of $\phi$ are all the same length, in which case we refer to $\phi$ as a \emph{basic automorphism}. Then $\phi$ can be used to fully decompose $M$ into a number of smaller matrices, one of which is the divisor matrix $D$ associated with the equitable partition induced by $\phi$ (see Theorem 4.4, \cite{BFW}). This decomposition is called an \emph{equitable decomposition}, which we briefly describe here.
We form a \textit{semi-transversal} $\cT_0$ of the orbits of the basic automorphism $\phi$  by choosing one vertex from each orbit of \emph{size} $k$. Further we define
\begin{equation}\label{eq:tranversal}
\cT_\ell = \{\phi^\ell(v) \ | \ v \in \cT_0\}
\end{equation}
for $\ell = 0,1, \ldots, k-1$ to be the $\ell$th power of $\cT_0$ and we let $M[\mathcal{T}_i,\mathcal{T}_j]$ be the submatrix of $M$ whose rows are indexed by $\mathcal{T}_i$ and whose columns are indexed by $\mathcal{T}_j$. We let $\cT_f$ denote the vertices fixed by $\phi$, which are
\[
\cT_f=\{v\in V(G) \ |\phi(v)=v\}.
\]
These definitions allow us to decompose an automorphism compatible matrix $M$ in the following way.

\begin{thm}\label{thm:2} \textbf{(Basic Equitable Decomposition) \cite{BFW}}
Let $G$ be a graph on $n$ vertices, let $\phi$ be a basic automorphism of $G$ of size $k>1$, let $\cT_0$ be a semi-transversal of the $k$-orbits of $\phi$, let $\cT_f$ be the vertices fixed by $\phi$, 
 and let $M$ be an automorphism compatible matrix on $G$.  Set $F = M[\cT_f,\cT_f]$, $H = M[\cT_f,\cT_0]$, $L=M[\cT_0,\cT_f]$, $M_m = M[\cT_0, \cT_m]$, for $m = 0, 1, \ldots, k-1$, $\omega = e^{2 \pi i /k}$, and
\begin{equation}\label{eq:B}
B_j = \sum_{m=0}^{k-1} \omega^{jm} M_m,  \ \ j = 0, 1, \ldots, k-1.
\end{equation}
Then there exists an invertible matrix $S$ that can be explicitly constructed such that
\begin{equation}\label{eq:spectrum2}
S^{-1}MS=M_{\phi}\oplus B_1\oplus B_2\oplus\cdots B_{k-1}
\end{equation}
where $M_\phi=\left[\begin{array}{rr} F & kH \\ L & B_0 \end{array}\right]$ is the divisor matrix associated with $\phi$. Thus
$${\sigma(M) = \sigma\left(M_\phi \right)
\cup \sigma(B_1) \cup \sigma(B_2) \cup \cdots \cup \sigma(B_{k-1}).}$$
\end{thm}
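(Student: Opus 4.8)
The plan is to write down the conjugating matrix $S$ explicitly and verify the similarity by a direct block computation. First I would fix a convenient ordering of $V(G)$: list the fixed vertices $\cT_f$ first, then the semi-transversal $\cT_0$, then $\cT_1,\dots,\cT_{k-1}$, where $\cT_\ell=\phi^\ell(\cT_0)$ as in \eqref{eq:tranversal}. Since $\phi$ is basic of size $k>1$, every vertex lies in $\cT_f$ or in a $k$-orbit, so these sets do partition $V(G)$, $\phi^k$ is the identity, and $\phi$ acts by fixing $\cT_f$ pointwise and mapping $\cT_\ell$ bijectively onto $\cT_{\ell+1\bmod k}$.

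Next I would record the block structure of $M$ in this ordering using only automorphism compatibility (together with the fact that each $\phi^{-a}$ is again an automorphism). Writing $M[\cT_a,\cT_b]$ for the indicated submatrix, the identity $M_{\phi^{-a}(i),\phi^{-a}(j)}=M_{i,j}$ gives $M[\cT_a,\cT_b]=M[\cT_0,\cT_{b-a}]=M_{(b-a)\bmod k}$, and likewise $M[\cT_f,\cT_a]=M[\cT_f,\cT_0]=H$ and $M[\cT_a,\cT_f]=M[\cT_0,\cT_f]=L$ for every $a$. Thus $M$ is built from an $F$-block on $\cT_f$, constant border blocks $H$ and $L$, and a block-circulant interior $\mathrm{bcirc}(M_0,\dots,M_{k-1})$ on $\cT_0\cup\cdots\cup\cT_{k-1}$.

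Then I would define $S$ to be the $n\times n$ matrix whose columns are the standard basis vectors $e_i$ for $i\in\cT_f$ together with the Fourier-twisted vectors $u_{v,j}=\sum_{b=0}^{k-1}\omega^{jb}\,e_{\phi^b(v)}$ for $v\in\cT_0$ and $j=0,1,\dots,k-1$, ordered so that the $\cT_f$-columns and the $j=0$ columns come first (these will span the block carrying $M_\phi$, of size $|\cT_f|+|\cT_0|$) and the $j$-th batch of $|\cT_0|$ columns comes next for $j=1,\dots,k-1$ (carrying $B_j$). Invertibility of $S$ reduces to nonsingularity of the Vandermonde matrix $[\omega^{jb}]_{j,b=0}^{k-1}$: on the $\cT_f$-coordinates $S$ restricts to an identity block, and on the $\cT_0\cup\cdots\cup\cT_{k-1}$-coordinates it is that Fourier matrix tensored with an identity.

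Finally I would check $MS=S\,(M_\phi\oplus B_1\oplus\cdots\oplus B_{k-1})$ column by column. For a column $e_i$ with $i\in\cT_f$ this is immediate from the block structure. For a column $u_{v,j}$, the interior computation is the block-circulant reindexing $\sum_b\omega^{jb}M[\cT_a,\cT_b]=\sum_m\omega^{j(m+a)}M_m=\omega^{ja}B_j$, so the $\cT_a$-part of $Mu_{v,j}$ equals $\omega^{ja}B_je_v$, while the $\cT_f$-part equals $\big(\sum_b\omega^{jb}\big)He_v$, which is $kHe_v$ when $j=0$ and $0$ when $j\ne 0$ by orthogonality of the characters of $\ZZ/k\ZZ$; matching these against the columns of $S\,(M_\phi\oplus B_1\oplus\cdots\oplus B_{k-1})$ — where the factor $k$ multiplying $H$ in $M_\phi$ is precisely what the $j=0$ case produces — finishes the verification. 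Conjugating gives $S^{-1}MS=M_\phi\oplus B_1\oplus\cdots\oplus B_{k-1}$, and since similar matrices share the same eigenvalue multiset and $\sigma$ of a direct sum is the multiset union of the summands' spectra, the stated decomposition of $\sigma(M)$ follows. I expect the only real obstacle to be bookkeeping: keeping the reindexing $\sum_b\omega^{jb}M_{b-a}=\omega^{ja}B_j$ straight and tracking why the factor $k$ attaches to $H$ but not to $L$ — the asymmetry is that $H$ is summed over all $k$ transversal blocks in forming the $j=0$ vector, whereas each $L$-block meets that vector only once. Everything else is routine linear algebra.
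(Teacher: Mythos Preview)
Your proposal is correct and is essentially the same argument the paper uses: the theorem is quoted here from \cite{BFW} without proof, but the paper's Lemma~\ref{lem:Dallas} (specialized to $N=1$, which the authors note recovers exactly the basic case) proceeds by the identical mechanism---an explicit Fourier/Vandermonde change of basis that block-diagonalizes the block-circulant part and handles the fixed-vertex border via the orthogonality relation $\sum_b\omega^{jb}=k\delta_{j,0}$. The only cosmetic difference is that the paper writes down $T^{-1}$ explicitly and computes $T^{-1}MT$ block by block, whereas you verify $MS=S(M_\phi\oplus B_1\oplus\cdots)$ column by column; these are equivalent bookkeeping choices.
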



In \cite{FSSW} these results were extended beyond basic automorphisms to \emph{separable} automorphisms $\phi$ (where $|\phi |=p_1 p_2\dots p_k$ a product of distinct primes). This can be done by using a series of decompositions, one for each prime.

One might naively believe that we could also use this method to decomposed a matrix with an automorphism whose order is $p^N$, by the process  outlined in \cite{FSSW}.  The following is an example showing this method does  not work in general and demonstrating a  need for a more sophisticated method to decompose matrices over automorphisms of order $p^N$.

\begin{example}\label{ex:1}

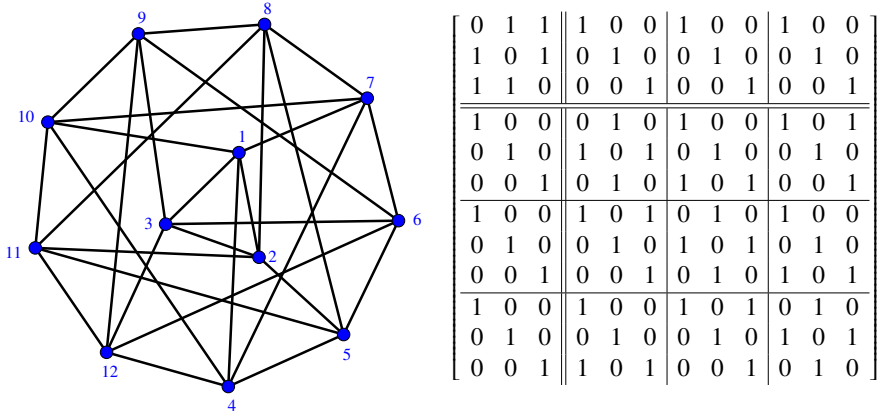
\begin{figure}[h]
\resizebox{.8\textwidth}{!}{
\begin{tabular}{cc}
\begin{minipage}[b]{.47\textwidth}

\begin{tikzpicture}[line cap=round,line join=round,>=triangle 45,x=1.0cm,y=1.0cm,scale = .85]
\draw [line width=1.2pt] (-0.6,3.62)-- (2.123558053023431,9.276851992900792);
\draw [line width=1.2pt] (-4.137198504789,8.807096459050374)-- (2.123558053023431,9.276851992900792);
\draw [line width=1.2pt] (-4.137198504789,8.807096459050374)-- (-0.6,3.62);
\draw [line width=1.2pt] (-2.9869038033291604,4.291334665910222)-- (-2.3623537280050293,10.538548456168009);
\draw [line width=1.2pt] (-2.3623537280050293,10.538548456168009)-- (2.735617079568619,6.874065329872934);
\draw [line width=1.2pt] (2.735617079568619,6.874065329872934)-- (-2.9869038033291604,4.291334665910222);
\draw [line width=1.2pt] (-4.383852592928873,6.339879046497543)-- (0.11021214116330502,10.724069405453623);
\draw [line width=1.2pt] (0.11021214116330502,10.724069405453623)-- (1.66,4.64);
\draw [line width=1.2pt] (1.66,4.64)-- (-4.383852592928873,6.339879046497543);
\draw [line width=1.2pt] (-4.383852592928873,6.339879046497543)-- (-4.137198504789,8.807096459050374);
\draw [line width=1.2pt] (-4.137198504789,8.807096459050374)-- (-2.3623537280050293,10.538548456168009);
\draw [line width=1.2pt] (-2.3623537280050293,10.538548456168009)-- (0.11021214116330502,10.724069405453623);
\draw [line width=1.2pt] (0.11021214116330502,10.724069405453623)-- (2.123558053023431,9.276851992900792);
\draw [line width=1.2pt] (2.123558053023431,9.276851992900792)-- (2.735617079568619,6.874065329872934);
\draw [line width=1.2pt] (2.735617079568619,6.874065329872934)-- (1.66,4.64);
\draw [line width=1.2pt] (1.66,4.64)-- (-0.6,3.62);
\draw [line width=1.2pt] (-0.6,3.62)-- (-2.9869038033291604,4.291334665910222);
\draw [line width=1.2pt] (-2.9869038033291604,4.291334665910222)-- (-4.383852592928873,6.339879046497543);
\draw [line width=1.2pt] (-1.8285211049496375,6.806067771006954)-- (-2.3623537280050293,10.538548456168009);
\draw [line width=1.2pt] (-1.8285211049496375,6.806067771006954)-- (2.735617079568619,6.874065329872934);
\draw [line width=1.2pt] (-1.8285211049496375,6.806067771006954)-- (-2.9869038033291604,4.291334665910222);
\draw [line width=1.2pt] (0.001571740633340254,6.156128255753188)-- (-1.8285211049496375,6.806067771006954);
\draw [line width=1.2pt] (-0.3918127028097297,8.208568830238764)-- (-1.8285211049496375,6.806067771006954);
\draw [line width=1.2pt] (-0.3918127028097297,8.208568830238764)-- (0.001571740633340254,6.156128255753188);
\draw [line width=1.2pt] (0.001571740633340254,6.156128255753188)-- (0.11021214116330502,10.724069405453623);
\draw [line width=1.2pt] (0.001571740633340254,6.156128255753188)-- (1.66,4.64);
\draw [line width=1.2pt] (0.001571740633340254,6.156128255753188)-- (-4.383852592928873,6.339879046497543);
\draw [line width=1.2pt] (-0.3918127028097297,8.208568830238764)-- (-4.137198504789,8.807096459050374);
\draw [line width=1.2pt] (-0.3918127028097297,8.208568830238764)-- (2.123558053023431,9.276851992900792);
\draw [line width=1.2pt] (-0.3918127028097297,8.208568830238764)-- (-0.6,3.62);
\begin{scriptsize}
\draw [fill=blue] (-0.6,3.62) circle (3.5pt);
\draw[color=blue] (-0.5286420744421019,3.24120280245417) node {$4$};
\draw [fill=blue] (1.66,4.64) circle (3.5pt);
\draw[color=blue] (1.7290425574920387,4.250319418242912) node {$5$};
\draw [fill=blue] (2.735617079568619,6.874065329872934) circle (3.5pt);
\draw[color=blue] (3.1065738590969693,6.890900378723) node {$6$};
\draw [fill=blue] (2.123558053023431,9.276851992900792) circle (3.5pt);
\draw[color=blue] (2.1908416867512948,9.585414382289507) node {$7$};
\draw [fill=blue] (0.11021214116330502,10.724069405453623) circle (3.5pt);
\draw[color=blue] (0.17260845517380546,11.039226455883457) node {$8$};
\draw [fill=blue] (-2.3623537280050293,10.538548456168009) circle (3.5pt);
\draw[color=blue] (-2.2903202342088935,10.851086069888945) node {$9$};
\draw [fill=blue] (-4.137198504789,8.807096459050374) circle (3.5pt);
\draw[color=blue] (-4.569102065429732,8.923615253030252) node {$10$};
\draw [fill=blue] (-4.383852592928873,6.339879046497543) circle (3.5pt);
\draw[color=blue] (-4.808553465786383,6.260686563647559) node {$11$};
\draw [fill=blue] (-2.9869038033291604,4.291334665910222) circle (3.5pt);
\draw[color=blue] (-2.9231560780086148,3.908245989161983) node {$12$};
\draw [fill=blue] (-1.8285211049496375,6.806067771006954) circle (3.5pt);
\draw[color=blue] (-2.1601064191334512,6.822485692906814) node {$3$};
\draw [fill=blue] (0.001571740633340254,6.156128255753188) circle (3.5pt);
\draw[color=blue] (0.26998642644952634,6.172546177653048) node {$2$};
\draw [fill=blue] (-0.3918127028097297,8.208568830238764) circle (3.5pt);
\draw[color=blue] (-0.32339801699354365,8.524986752138625) node {$1$};
\end{scriptsize}
\end{tikzpicture}
\end{minipage}&

\begin{minipage}[b]{.47\textwidth}

 $\left[\begin{array}{lll || lll | lll | lll}
0 & 1 & 1 & 1 & 0 & 0 & 1 & 0 &  0 & 1 & 0 & 0\\
1 & 0 & 1 & 0 & 1 & 0 & 0 & 1 & 0 &  0 &  1 &  0\\
1 & 1 & 0 & 0 & 0 & 1 & 0 & 0 & 1 & 0 & 0 & 1\\\hline \hline
1 & 0 & 0 & 0 & 1 & 0 & 1 & 0 & 0 & 1 & 0 & 1\\
0 & 1 & 0 & 1 & 0 & 1 & 0 & 1 & 0 & 0 & 1 &0\\
0 & 0 & 1 & 0 & 1 & 0 & 1 & 0 & 1 & 0 & 0 & 1\\\hline
1 & 0 & 0 & 1 &0 & 1 & 0 & 1 & 0 & 1 & 0 & 0\\
0 & 1 & 0 & 0 & 1 & 0 & 1 & 0 & 1 & 0 & 1 & 0\\
0 & 0 & 1 & 0 & 0 & 1 & 0 & 1 & 0 & 1 & 0 & 1\\\hline
1 & 0 & 0 & 1 & 0 & 0 & 1 & 0 & 1 & 0 & 1 & 0\\
0 & 1 & 0 & 0 & 1 & 0 & 0 & 1 & 0 & 1 & 0 & 1\\
0 & 0 & 1 & 1 & 0 & 1 & 0 & 0 & 1 & 0 & 1 & 0
\end{array}\right]$
\vspace{.5cm}

\end{minipage}
\end{tabular}}
\caption{The graph $G$ on 12 vertices with automorphism $\phi=(1,2,3)(4,5,6,7,8,9,10,11,12)$, and its adjacency matrix $M$. Here vertices are labeled blue and edge weights are labeled black, which will be our convention throughout the paper.}\label{ex:12}
\end{figure}

Consider the following graph and its  adjacency matrix in Figure \ref{ex:12}.
We attempt to follow the recursive method of equitable decompositions for separable automorphisms in \cite{FSSW} by first forming a new automorphism $\psi=\phi^3=(4,7,10)(5,8,11)\allowbreak (6,9,12)$. The first decomposition gives the following direct sum of smaller matrices with the associated digraphs found in Figure \ref{threegraphs}:
\begin{equation}\label{ex25}
\left[\begin{array}{lll | lll }
0 & 1 & 1 & 3 & 0 & 0 \\
1 & 0 & 1 & 0 & 3 & 0\\
1 & 1 & 0 & 0 & 0 & 3 \\\hline
1 & 0 & 0 & 2 & 1 & 1 \\
0 & 1 & 0 & 1 & 2 & 1 \\
0 & 0 & 1 & 1 & 1 & 2
\end{array}\right] \oplus
\left[\begin{array}{lll }
\w +\w^2& 1 & \w^2  \\
1 & \w + \w^2 & 1 \\
\w & 1 & \w + \w^2  \\
\end{array}\right]\oplus
\left[\begin{array}{lll }
\w +\w^2& 1 & \w  \\
1 & \w + \w^2 & 1 \\
\w^2 & 1 & \w + \w^2  \\
\end{array}\right].\end{equation}

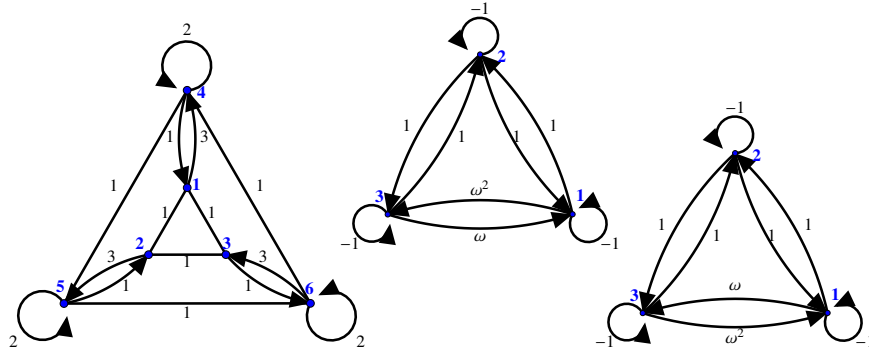
\begin{figure}[h!]
\begin{center}
\resizebox{.8\textwidth}{!}{
\begin{tabular}{c}
\begin{tikzpicture}[line cap=round,line join=round,>=triangle 45,x=1.0cm,y=1.0cm, scale = .4]
\draw [line width=1.2pt] (3.,2.)-- (1.42,-0.74);
\draw [line width=1.2pt] (1.42,-0.74)-- (4.582909606369362,-0.738320137979414);
\draw [line width=1.2pt] (4.582909606369362,-0.738320137979414)-- (3.,2.);
\draw [->, shift={(8.746530801304676,3.9975601362061384)},line width=1.2pt]  plot[domain=2.808109490764957:3.47613804206866,variable=\t]({1.*6.083819749803839*cos(\t r)+0.*6.083819749803839*sin(\t r)},{0.*6.083819749803839*cos(\t r)+1.*6.083819749803839*sin(\t r)});
\draw [->,shift={(-2.7619038286835558,3.9914478583805644)},line width=1.2pt]  plot[domain=-0.3327703910467745:0.33383261670080516,variable=\t]({1.*6.096343191097985*cos(\t r)+0.*6.096343191097985*sin(\t r)},{0.*6.096343191097985*cos(\t r)+1.*6.096343191097985*sin(\t r)});
\draw [->, shift={(2.574873153942068,-6.730834430650846)},line width=1.2pt]  plot[domain=1.7612336156607167:2.4286188147797056,variable=\t]({1.*6.1011334338111105*cos(\t r)+0.*6.1011334338111105*sin(\t r)},{0.*6.1011334338111105*cos(\t r)+1.*6.1011334338111105*sin(\t r)});
\draw [->,shift={(-3.190222966731918,3.2423685501799246)},line width=1.2pt]  plot[domain=4.902310947532458:5.570726790087551,variable=\t]({1.*6.09207805863029*cos(\t r)+0.*6.09207805863029*sin(\t r)},{0.*6.09207805863029*cos(\t r)+1.*6.09207805863029*sin(\t r)});
\draw [->,shift={(3.4409228960059806,-6.713365807470568)},line width=1.2pt]  plot[domain=0.7135095119397563:1.3819478161074714,variable=\t]({1.*6.083198533596579*cos(\t r)+0.*6.083198533596579*sin(\t r)},{0.*6.083198533596579*cos(\t r)+1.*6.083198533596579*sin(\t r)});
\draw [->,shift={(9.177088828763083,3.23416245566517)},line width=1.2pt]  plot[domain=3.854546186837524:4.524096448389289,variable=\t]({1.*6.073475173595705*cos(\t r)+0.*6.073475173595705*sin(\t r)},{0.*6.073475173595705*cos(\t r)+1.*6.073475173595705*sin(\t r)});
\draw [->,shift={(2.9803003568273967,6.988860475994255)},line width=1.2pt]  
  plot[domain=-1.5532143950535158:4.314098663792105,variable=\t]({1.*1.*cos(\t r)+0.*1.*sin(\t r)},{0.*1.*cos(\t r)+1.*1.*sin(\t r)}) ;
\draw [->,shift={(8.89586008462023,-3.249577222650493)},line width=1.2pt]    plot[domain=-3.6856496994623047:2.1335046414728542,variable=\t]({1.*1.*cos(\t r)+0.*1.*sin(\t r)},{0.*1.*cos(\t r)+1.*1.*sin(\t r)})  ;
\draw [->,shift={(-2.911791543957258,-3.2303056150093457)},line width=1.2pt]   plot[domain=0.512248008742023:6.265411724401079,variable=\t]({1.*1.*cos(\t r)+0.*1.*sin(\t r)},{0.*1.*cos(\t r)+1.*1.*sin(\t r)});
\draw [line width=1.2pt] (2.9978813827489854,5.9890150341745985)-- (-2.0401466673309936,-2.7401676660080625);
\draw [line width=1.2pt] (-2.0401466673309936,-2.7401676660080625)-- (8.040244329607463,-2.7319657083137923);
\draw [line width=1.2pt] (8.040244329607463,-2.7319657083137923)-- (2.9978813827489854,5.9890150341745985);
\begin{scriptsize}
\draw [fill=blue] (3.,2.) circle (4.5pt);
\draw[color=blue] (3.3797862649889464,2.1344492655878913) node {${\bf 1}$};
\draw [fill=blue] (1.42,-0.74) circle (4.5pt);
\draw[color=blue] (1.0607506144093768,-0.2652055658528048) node {${\bf 2}$};
\draw [fill=blue] (4.582909606369362,-0.738320137979414) circle (4.5pt);
\draw[color=blue] (4.621528751385895,-0.291008305975823) node {${\bf3}$};
\draw [fill=blue] (2.9978813827489854,5.9890150341745985) circle (4.5pt);
\draw[color=blue] (3.5797862649889464,5.901649323548554) node {${\bf4}$};
\draw [fill=blue] (-2.0401466673309936,-2.7401676660080625) circle (4.5pt);
\draw[color=blue] (-2.1903946410909225,-2.045594634341063) node {${\bf5}$};
\draw [fill=blue] (8.040244329607463,-2.7319657083137923) circle (4.5pt);
\draw[color=blue] (8,-2.0972001145870998) node {${\bf6}$};

\draw[color=black] (5.9797862649889464,2.101649323548554) node {$1$};
\draw[color=black] (0,2.101649323548554) node {$1$};
\draw[color=black] (3,-3.101649323548554) node {$1$};
\draw[color=black] (10.1,-4.301649323548554) node {$2$};
\draw[color=black] (-4.1,-4.301649323548554) node {$2$};
\draw[color=black] (3,8.5) node {$2$};
\draw[color=black] (3.7,4.1) node {$3$};
\draw[color=black] (2.3,4.1) node {$1$};
\draw[color=black] (-.1,-.8) node {$3$};
\draw[color=black] (0.5,-2) node {$1$};
\draw[color=black] (6.1,-.8) node {$3$};
\draw[color=black] (5.5,-2) node {$1$};
\draw[color=black] (2.1,1) node {$1$};
\draw[color=black] (4,1) node {$1$};
\draw[color=black] (3,-1) node {$1$};
\end{scriptsize}
\end{tikzpicture}

\hspace*{-.7cm}\begin{tikzpicture}[line cap=round,line join=round,>=triangle 45,x=1.0cm,y=1.0cm,scale = .3]
\draw [->, shift={(3.0130847985281797,-18.75756462853538)},line width=1.2pt]  plot[domain=1.2668226187643103:1.8763973439024366,variable=\t]({1.*16.79559923619067*cos(\t r)+0.*16.79559923619067*sin(\t r)},{0.*16.79559923619067*cos(\t r)+1.*16.79559923619067*sin(\t r)});
\draw [->, shift={(19.37392598872407,9.63923476036045)},line width=1.2pt]  plot[domain=3.3609073531733613:3.970731049757719,variable=\t]({1.*16.777930175868597*cos(\t r)+0.*16.777930175868597*sin(\t r)},{0.*16.777930175868597*cos(\t r)+1.*16.777930175868597*sin(\t r)});
\draw [->, shift={(-8.333612517864772,-6.380920499656055)},line width=1.2pt]  plot[domain=0.2192694750466929:0.8291836207048014,variable=\t]({1.*16.775519638174345*cos(\t r)+0.*16.775519638174345*sin(\t r)},{0.*16.775519638174345*cos(\t r)+1.*16.775519638174345*sin(\t r)});
\draw [->, shift={(14.35091983175103,-6.3818000228558365)},line width=1.2pt]  plot[domain=2.313319799013006:2.9229717803830764,variable=\t]({1.*16.790728012798304*cos(\t r)+0.*16.790728012798304*sin(\t r)},{0.*16.790728012798304*cos(\t r)+1.*16.790728012798304*sin(\t r)});
\draw [->, shift={(-13.389609820737382,9.628583917256625)},line width=1.2pt]  plot[domain=5.454838648429755:6.064638238145913,variable=\t]({1.*16.786790330466452*cos(\t r)+0.*16.786790330466452*sin(\t r)},{0.*16.786790330466452*cos(\t r)+1.*16.786790330466452*sin(\t r)});
\draw [->, shift={(2.9940079090559673,13.282142090759644)},line width=1.2pt]  plot[domain=4.407959240767685:5.017651628291141,variable=\t]({1.*16.794556327595302*cos(\t r)+0.*16.794556327595302*sin(\t r)},{0.*16.794556327595302*cos(\t r)+1.*16.794556327595302*sin(\t r)});
\draw [shift={(2.9803003568273967,6.988860475994255)},line width=1.2pt,->]  plot[domain=-1.5532143950535158:4.014098663792105,variable=\t]({1.*1.*cos(\t r)+0.*1.*sin(\t r)},{0.*1.*cos(\t r)+1.*1.*sin(\t r)}) ;
\draw [shift={(8.89586008462023,-3.249577222650493)},line width=1.2pt,->]    plot[domain=-3.6856496994623047:2.1335046414728542,variable=\t]({1.*1.*cos(\t r)+0.*1.*sin(\t r)},{0.*1.*cos(\t r)+1.*1.*sin(\t r)})  ;
\draw [shift={(-2.911791543957258,-3.2303056150093457)},line width=1.2pt, ->]    plot[domain=0.512248008742023:6.265411724401079,variable=\t]({1.*1.*cos(\t r)+0.*1.*sin(\t r)},{0.*1.*cos(\t r)+1.*1.*sin(\t r)})  ;
\begin{scriptsize}
\draw [fill=blue] (2.9978813827489854,5.9890150341745985) circle (4pt);
\draw[color=blue] (4.1797862649889464,5.901649323548554) node {${\bf 2}$};
\draw [fill=blue] (-2.0401466673309936,-2.7401676660080625) circle (4pt);
\draw[color=blue] (-2.4903946410909225,-1.745594634341063) node {${\bf 3}$};
\draw [fill=blue] (8.040244329607463,-2.7319657083137923) circle (4pt);
\draw[color=blue] (8.472674006886195,-1.6972001145870998) node {${\bf 1}$};

\draw[color=black] (-1,2.101649323548554) node {$1$};
\draw[color=black] (3,-4) node {$\w$};
\draw[color=black] (10.1,-4.301649323548554) node {$-1$};
\draw[color=black] (-4.1,-4.301649323548554) node {$-1$};
\draw[color=black] (3,8.5) node {$-1$};
\draw[color=black] (3,-1.3) node {$\w^2$};
\draw[color=black] (2,1.5) node {$1$};
\draw[color=black] (5,1.5) node {$1$};
\draw[color=black] (7,2.101649323548554) node {$1$};
\draw[color=black] (3,-10) node {$$};

\end{scriptsize}
\end{tikzpicture}

\hspace*{-.7cm}\begin{tikzpicture}[line cap=round,line join=round,>=triangle 45,x=1.0cm,y=1.0cm,scale = .3]
\draw [->, shift={(3.0130847985281797,-18.75756462853538)},line width=1.2pt]  plot[domain=1.2668226187643103:1.8763973439024366,variable=\t]({1.*16.79559923619067*cos(\t r)+0.*16.79559923619067*sin(\t r)},{0.*16.79559923619067*cos(\t r)+1.*16.79559923619067*sin(\t r)});
\draw [->, shift={(19.37392598872407,9.63923476036045)},line width=1.2pt]  plot[domain=3.3609073531733613:3.970731049757719,variable=\t]({1.*16.777930175868597*cos(\t r)+0.*16.777930175868597*sin(\t r)},{0.*16.777930175868597*cos(\t r)+1.*16.777930175868597*sin(\t r)});
\draw [->, shift={(-8.333612517864772,-6.380920499656055)},line width=1.2pt]  plot[domain=0.2192694750466929:0.8291836207048014,variable=\t]({1.*16.775519638174345*cos(\t r)+0.*16.775519638174345*sin(\t r)},{0.*16.775519638174345*cos(\t r)+1.*16.775519638174345*sin(\t r)});
\draw [->, shift={(14.35091983175103,-6.3818000228558365)},line width=1.2pt]  plot[domain=2.313319799013006:2.9229717803830764,variable=\t]({1.*16.790728012798304*cos(\t r)+0.*16.790728012798304*sin(\t r)},{0.*16.790728012798304*cos(\t r)+1.*16.790728012798304*sin(\t r)});
\draw [->, shift={(-13.389609820737382,9.628583917256625)},line width=1.2pt]  plot[domain=5.454838648429755:6.064638238145913,variable=\t]({1.*16.786790330466452*cos(\t r)+0.*16.786790330466452*sin(\t r)},{0.*16.786790330466452*cos(\t r)+1.*16.786790330466452*sin(\t r)});
\draw [->, shift={(2.9940079090559673,13.282142090759644)},line width=1.2pt]  plot[domain=4.407959240767685:5.017651628291141,variable=\t]({1.*16.794556327595302*cos(\t r)+0.*16.794556327595302*sin(\t r)},{0.*16.794556327595302*cos(\t r)+1.*16.794556327595302*sin(\t r)});
\draw [shift={(2.9803003568273967,6.988860475994255)},line width=1.2pt,->]  plot[domain=-1.5532143950535158:4.014098663792105,variable=\t]({1.*1.*cos(\t r)+0.*1.*sin(\t r)},{0.*1.*cos(\t r)+1.*1.*sin(\t r)}) ;
\draw [shift={(8.89586008462023,-3.249577222650493)},line width=1.2pt,->]    plot[domain=-3.6856496994623047:2.1335046414728542,variable=\t]({1.*1.*cos(\t r)+0.*1.*sin(\t r)},{0.*1.*cos(\t r)+1.*1.*sin(\t r)})  ;
\draw [shift={(-2.911791543957258,-3.2303056150093457)},line width=1.2pt, ->]    plot[domain=0.512248008742023:6.265411724401079,variable=\t]({1.*1.*cos(\t r)+0.*1.*sin(\t r)},{0.*1.*cos(\t r)+1.*1.*sin(\t r)})  ;
\begin{scriptsize}
\draw [fill=blue] (2.9978813827489854,5.9890150341745985) circle (4pt);
\draw[color=blue] (4.1797862649889464,5.901649323548554) node {${\bf 2}$};
\draw [fill=blue] (-2.0401466673309936,-2.7401676660080625) circle (4pt);
\draw[color=blue] (-2.4903946410909225,-1.745594634341063) node {${\bf 3}$};
\draw [fill=blue] (8.040244329607463,-2.7319657083137923) circle (4pt);
\draw[color=blue] (8.472674006886195,-1.6972001145870998) node {${\bf 1}$};

\draw[color=black] (-1,2.101649323548554) node {$1$};
\draw[color=black] (3,-4) node {$\w^2$};
\draw[color=black] (10.1,-4.301649323548554) node {$-1$};
\draw[color=black] (-4.1,-4.301649323548554) node {$-1$};
\draw[color=black] (3,8.5) node {$-1$};
\draw[color=black] (3,-1.3) node {$\w$};
\draw[color=black] (2,1.5) node {$1$};
\draw[color=black] (5,1.5) node {$1$};
\draw[color=black] (7,2.101649323548554) node {$1$};
\end{scriptsize}
\end{tikzpicture} \end{tabular}}
\end{center}
\caption{The decomposition of the graph $G$ on 12 vertices in Figure \ref{ex:12} using the basic  automorphism $\psi=(4,7, 10), (5, 8, 11), (6, 9, 12)$.  Here vertices are labeled blue and edge weights are labeled black, which will be our convention throughout the paper.}\label{threegraphs}
\end{figure}

While there is an automorphism of order three which acts on the first matrix of the decomposed matrix shown in Equation \eqref{ex25}, there is no automorphism permuting all of the vertices of $G$ as $\phi$ does.  Thus, if we continue the recursive process in this example, we fail to account for part of the symmetry found in $\phi$, so that we fail to equitably decompose the graph $G$ in Figure \ref{ex:12}.   This decomposition was done using the transversal $\cT_0=\{4,8,12\}$ (See \cite{BFW, FSSW} for details). One may wonder if a different choice of semi-transversal could give better results. However, simple computations demonstrate that any choice of semi-transversal yields a similarly unsatisfying conclusion. Thus we conclude that the methods contained in \cite{BFW} and \cite{FSSW} cannot be used to \emph{completely} decompose examples like this.
\end{example}

\section{Equitable Decompositions over Prime-Power Automorphisms}\label{sec:pp}
In this section we give a step-by-step method for decomposing a graph over any of its automorphisms $\phi$ of order $p^N$ for some prime $p$ and $N\geq1$, which we refer to as a \emph{prime-power automorphism}. (We note that if $N=1$ then $\phi$ is a basic automorphism.) This result will allow us in the following section to describe the general case of an equitable decomposition of a graph over any of its automorphisms.

To show how a graph can be equitably decomposed over a prime-power automorphism we require the following lemma. 


\begin{lem}\label{lem:Dallas}
For a prime $p$ and $N\geq 2$ let $M$ be the $(f + rp^N) \times (f + rp^N)$ block matrix
\begin{equation}\label{eq:defineM}
M = \left[\begin{array}{llllll}
F & H & H & H & \cdots & H \\
L &  \\
L &  \\
L &  &  & C \\
\vdots &  \\
L &  \\
\end{array}\right],
\end{equation}
where $F$ is an $f \times f$ matrix, $H$ is an $f \times rp^{N-1}$ matrix,  $L$ is an $rp^{N-1} \times f$ matrix, and $C$ is an $rp^N \times rp^N$ matrix. Suppose that the matrix $C$ can be partitioned in two ways:
\begin{equation}\label{blockcirculant}
C =  \left[\begin{array}{lllll}
 C_0 & C_1 & C_2 & \cdots & C_{p^N-1} \\
 C_{p^N-1} & C_0 & C_1 & \cdots & C_{p^N-2} \\
 C_{p^N-2} & C_{p^N-1} & C_0 & \cdots & C_{p^N-3} \\
 \vdots & \vdots & \vdots & & \vdots \\
 C_1 & C_2 & C_3 & \cdots & C_0 \\
\end{array}\right] =
\left[\begin{array}{lllll}
 D_0 & D_1 & D_2 & \cdots & D_{p-1} \\
 D_{p-1} & D_0 & D_1 & \cdots & D_{p-2} \\
 D_{p-2} & D_{p-1} & D_0 & \cdots & D_{p-3} \\
 \vdots & \vdots & \vdots & & \vdots \\
 D_1 & D_2 & D_3 & \cdots & D_0 \\
\end{array}\right]
\end{equation}
where each $D_i$ block is of size $rp^{N-1} \times rp^{N-1}$, and each $C_j$ is of size $r \times r$.
  Then there exists an $(f + rp^{N}) \times (f  +rp^N)$ invertible matrix $T$ such that
\begin{equation}\label{eq:sim_decomp}
T^{-1} M T = \tilde{M} \oplus B_1 \oplus B_2 \oplus \cdots \oplus B_{p^N-p^{N-1}},
\end{equation}
with
\begin{equation}\label{eq:mtilde}
\tilde{M}=\left[\begin{array}{rr} F & pH \\ L & B_0 \end{array}\right], \ B_0 = \sum_{m=0}^{p-1} D_m, \ \text{and} \
B_j = \sum_{m=0}^{p^N-1} \omega^{\gamma_j m}C_m \ \ \text{for} \ \ j = 1,2, \ldots, p^N-p^{N-1},
\end{equation}
where  $\gamma_j$ are the elements of $\{1,2,\dots ,p^N-1\}$ which are not multiples of $p$. Consequently,
\[
\sigma(M) = \sigma\left(\tilde{M}\right)
\cup \sigma(B_1) \cup \sigma(B_2) \cup \cdots \cup \sigma(B_{p^N-p^{N-1}}).
\]
\end{lem}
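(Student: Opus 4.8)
The plan is to diagonalize the block-circulant structure of $M$ by conjugating with a block Fourier matrix, in two nested stages corresponding to the two factorizations $p^N = p \cdot p^{N-1}$ exhibited in \eqref{blockcirculant}. First I would treat $M$ as a $p$-block-circulant matrix in the $D_i$ blocks (with an extra border row/column of $F$'s and $H$'s, $L$'s), exactly as in the Basic Equitable Decomposition, Theorem \ref{thm:2}, applied with $k = p$: conjugating by the appropriate $p$-point DFT (tensored with identities of the right size, and leaving the $F$-block alone) produces $\tilde M = \left[\begin{array}{rr} F & pH \\ L & B_0\end{array}\right]$ together with $p-1$ blocks of size $rp^{N-1}\times rp^{N-1}$ of the form $E_\ell = \sum_{m=0}^{p-1}\omega_p^{\ell m} D_m$ for $\ell = 1,\dots,p-1$, where $\omega_p = e^{2\pi i/p}$. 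This step is essentially a citation of Theorem \ref{thm:2}, since the matrix $M$ with its $D$-block partition is precisely of the form handled there (a basic automorphism of size $p$, with fixed part of size $f$).

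The second stage is to observe that each leftover block $E_\ell$ is itself block-circulant at the finer scale: because $C$ is simultaneously $p^N$-block-circulant in the $r\times r$ blocks $C_j$ and $p$-block-circulant in the $D_i$ blocks, each $D_i$ is a sub-block-circulant assembled from the $C_j$'s, and consequently every $E_\ell$ is a $p^{N-1}$-block-circulant matrix in $r\times r$ blocks. Hence $E_\ell$ is diagonalized (into blocks) by the $p^{N-1}$-point block DFT, and its resulting diagonal blocks are $\sum_{m=0}^{p^N-1}\omega_{p^N}^{\gamma m} C_m$ for exactly those $\gamma$ in $\{1,\dots,p^N-1\}$ that are $\equiv \ell \pmod p$ — i.e., the non-multiples of $p$. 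Running over $\ell = 1,\dots,p-1$ collects precisely the index set $\{\gamma_1,\dots,\gamma_{p^N - p^{N-1}}\}$, giving the blocks $B_j$ of \eqref{eq:mtilde}. Composing the two conjugations gives the single invertible $T$ (a product of the two Fourier-type matrices, each padded by an identity on the $F$-coordinates), and \eqref{eq:sim_decomp} follows; the spectral statement is then immediate since conjugation preserves the spectrum and the spectrum of a direct sum is the union of spectra.

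The step I expect to be the main obstacle — or at least the one requiring the most care — is the bookkeeping in the second stage: verifying cleanly that a matrix simultaneously possessing the two block-circulant structures in \eqref{blockcirculant} has the property that the $p$-level DFT output blocks $E_\ell$ are themselves $p^{N-1}$-level block-circulant, and tracking exactly which root of unity $\omega_{p^N}^{\gamma}$ and which $C_m$ show up in which $B_j$. This is really the statement that the cyclic group $\ZZ/p^N\ZZ$ decomposes its regular representation through the chain $\ZZ/p^N\ZZ \to \ZZ/p\ZZ$, with the ``new'' characters (those nontrivial on the kernel $p\ZZ/p^N\ZZ$) being exactly the ones indexed by $\gamma$ coprime-to-$p$ residues; I would make this precise either by an explicit index computation with $\omega_{p^N}$ and $\omega_p = \omega_{p^N}^{p^{N-1}}$, or by factoring the relevant $p^N\times p^N$ DFT matrix as a Kronecker-type product of the $p$-point and $p^{N-1}$-point DFTs (a Cooley–Tukey-style factorization) and reading off the block structure. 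Once that identification is in hand, the remaining manipulations — assembling $T$, checking invertibility (it is a product of invertible Fourier matrices), and deducing the spectral decomposition — are routine.
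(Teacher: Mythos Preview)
Your two-stage route differs from the paper's. There $T$ is built in one shot as $T = \left[\begin{smallmatrix} I_f & 0 & 0 \\ 0 & R^T & S \end{smallmatrix}\right]$, where $R = [\,I_{rp^{N-1}}\ \cdots\ I_{rp^{N-1}}\,]$ ($p$ blocks) and the block-columns of $S$ are $(\omega^{k\gamma_j} I_r)_{k=0}^{p^N-1}$ for $\omega = e^{2\pi i/p^N}$; the product $T^{-1}MT$ is then computed block by block via root-of-unity identities. The primitive-character columns are hard-wired into $S$, so no second stage is needed. Your approach is more modular (it reuses Theorem~\ref{thm:2} directly) but pays for this with the bookkeeping below; the paper's direct construction sidesteps that entirely.

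Your strategy can be made to work, but the step you flagged does contain a real gap. For $\ell \ne 0$ the block $E_\ell = \sum_{m} \omega_p^{\ell m} D_m$ is \emph{not} $p^{N-1}$-block-circulant: its $(a,b)$-th $r\times r$ block equals $\sum_m \omega_p^{\ell m} C_{mp^{N-1}+b-a \bmod p^N}$, and replacing $b-a$ by $b-a+p^{N-1}$ multiplies this by $\omega_p^{-\ell}\ne 1$. So each $E_\ell$ is only a \emph{twisted} (factor-)block-circulant, and the plain $p^{N-1}$-point block DFT will not block-diagonalize it. The Cooley--Tukey factorization you mention is exactly the repair: the correct diagonalizing vectors for $E_\ell$ are $(\omega^{\gamma a} I_r)_{a=0}^{p^{N-1}-1}$ with $\gamma \equiv \ell \pmod p$ --- powers of the $p^N$-th root $\omega$, carrying the twiddle factors, not powers of a bare $p^{N-1}$-th root --- and with those the blocks $B_{\gamma_j}$ with $\gamma_j \equiv \ell \pmod p$ do drop out. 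Relatedly, your representation-theoretic heuristic is inverted: the $D$-block structure encodes the order-$p$ \emph{subgroup} $p^{N-1}\ZZ/p^N\ZZ$ (shift by $p^{N-1}$), not the quotient with kernel $p\ZZ/p^N\ZZ$; the $\gamma_j$ are the characters nontrivial on that subgroup (there are $p^N - p^{N-1}$ of them), whereas characters nontrivial on $p\ZZ/p^N\ZZ$ number $p^N - p$. Getting this orientation right is precisely what makes the twiddle-factor computation come out.
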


We refer to any matrix which has the form of the first equality in Equation \eqref{blockcirculant} \emph{block-circulant}\index{block-circulant}.  A matrix which has the form of $C$ which is block-circulant for two different sized block partitions is called \emph{double block-circulant}\index{double block-circulant}.

\begin{proof}
Before we begin, let us establish some useful identities  involving roots of unity. Let $\omega = e^{2\pi i /p^N}$, and $\gamma$ and $\gamma'$ be integers that are both relatively prime to $p$ with $0< \gamma < \gamma' < p^N$.  Thus $\omega^{\gamma}$ is a primitive $p^N$-root of unity. Using the fact that $\sum_{k=0}^{p^N-1}(\omega^{\gamma})^{km}=0$ if $p^N$ does not divide $m$ it is clear that for any integer $a$

\begin{equation}\label{eq:rtsofunity}
\sum\limits_{m=0}^{p-1}\omega^{\gamma (m p^{N-1}+a)}=0, \text{ and } \sum\limits_{m=0}^{p^{N}-1}\omega^{m(\gamma-\gamma')}=0.
\end{equation}

  Let $S$ be the $rp^N \times r(p^{N}-p^{N-1})$ block matrix where $I_r$ is the $r \times r$ identity matrix:
\begin{equation}\label{eq:101}
S = \left[\begin{array}{llll}
 I_r & I_r & \ldots & I_r \\
 \omega^{\gamma_1} I_r & \omega^{\gamma_2} I_r & \ldots & \omega^{\gamma_{q}} I_r \\
 \omega^{2\gamma_1} I_r & \omega^{2\gamma_2} I_r & \ldots & \omega^{2\gamma_{q}} I_r \\
\vdots  & \vdots & & \vdots \\
 \omega^{(p^N-1)\gamma_1} I_r & \omega^{(p^N-1)\gamma_2} I_r & \ldots & \omega^{(p^N-1)\gamma_{q}} I_r
 \end{array}\right],
\end{equation}
where $\omega = e^{2\pi i /p^N}$, and $\omega^{\gamma_1}, \omega^{\gamma_2}, \ldots, \omega^{\gamma_{q}}$ are  the generators of the cyclic group of the $p^N$-roots of unity and $q=p^N-p^{N-1}$. We set


\begin{equation}\label{eq:102}
T = \left[\begin{array}{c | c | c}
I_f & 0 & 0 \\\hline
0 & R^T & S \\
\end{array}\right]
\end{equation}
where $R=\left[\begin{array}{l l l l}I_{rp^{N-1}}& I_{rp^{N-1}}&\cdots& I_{rp^{N-1}}\end{array}\right].$ We consider the product
\[
 \left[\begin{array}{cc  }
I_f & 0 \\[1mm]
0 & \frac{1}{p}R \\[1mm]
 0 & \frac{1}{p^N}S^*   \\
\end{array}\right]T
=\left[\begin{array}{ccc}
I_f & 0 & 0 \\[1mm]
 0 & \frac{1}{p}RR^T
 & \frac{1}{p}RS \\[1mm]
0 & \frac{1}{p^N}S^*R^T & \frac{1}{p^N}S^*S \\

\end{array}\right],
\]
where $S^*$ is the conjugate transpose of $S$. The matrix $RS$ is a $p^{N-1}\times p^{N}$ $r$-block matrix where the $(a,b)^{th}$ block is given by
\[
 I_r\sum\limits_{m=0}^{p-1}\omega^{\gamma_{b}(mp^{N-1}+a-1)}=0, 
 \]
using Equation \eqref{eq:rtsofunity}. Therefore $RS=0$ and similarly, $S^*R^T=0$.
Note that $S^*S$ as a block matrix with $r\times r$ blocks where the $ (a,b)^{th}$ block has the form
\[
I_r\sum\limits_{m=0}^{p^{N}-1}\omega^{m(\gamma_b-\gamma_a)} 
=\left\{\begin{array}{cl}
p^NI_r & \text{if} \ a=b\\
0_r & \text{if} \ a \neq b
\end{array}\right. .
\]
{So, $\frac{1}{p^N}S^*S=I_{r(p^N-p^{N-1})}$}, and therefore
\[
T^{-1}= \left[\begin{array}{cc  }
I_f & 0 \\[1mm]
0 & \frac{1}{p}R \\[1mm]
 0 & \frac{1}{p^N}S^*   \\
\end{array}\right].
\]

Next we show that performing a similarity transformation using $T$ gives the equitable decomposition in Equation \eqref{eq:sim_decomp}.
Let $M$ be the matrix given in Equation \eqref{eq:defineM}.  Letting $P = \left[ H \quad H \quad \cdots \quad H\right]$ and
$Q = [ L^T \quad L^T \quad \cdots \quad L^T]^T$, we have
\[
T^{-1}MT = \left[\begin{array}{cc  }
I_f & 0 \\[1mm]
0 & \frac{1}{p}R \\[1mm]
 0 & \frac{1}{p^N}S^*   \\
\end{array}\right]
\left[\begin{array}{rr} F & P \\ Q & C \end{array}\right]
 \left[\begin{array}{ccc}
I_f & 0 & 0 \\
 0& R^T & S \\
\end{array}\right]
=\left[\begin{array}{ccc}
F & PR^T & PS \\[1mm]
\frac{1}{p}RQ  & \frac{1}{p}RCR^T & \frac{1}{p}RCS\\[1mm]
 \frac{1}{p^N}S^*Q & \frac{1}{p^N} S^*CR^T & \frac{1}{p^N}S^*CS \end{array}\right].\label{blocks}
\]
It is straightforward to verify that
\[
PR^T=pH, \quad RQ=pL, \quad \text{and} \quad RCR^T=p\sum_{m=0}^{p-1}{D_m}=pB_0.
\]
To show that $PS = 0$, we break $P$ into $f \times r$ blocks $H_i$ and observe that $H_i = H_{i + p^{N-1}}$ for $0\leq i\leq p^N-p^{N-1}-1$. The $k$th $f \times r$ block in the product $PS$ is given by
\[ \sum_{i=0}^{p^{N-1}-1}H_i\sum_{m=0}^{p-1}\omega^{\gamma_k (mp^{N-1}+i)} =0,
\]
as in Equation \eqref{eq:rtsofunity}. A similar calculation shows that $S^*Q=0$. Next we consider the matrix product $RCS$.
The $ (a,b)^{th} $ block has the form
\[
 {\sum\limits_{m = 0}^{{p^N} - 1} {C_m}{\omega^{(m+a-1)\gamma_b}\sum\limits_{j=0}^{p-1}{\omega ^{\gamma _b(jp^{N-1})}}} }
 \]
which is the zero matrix by Equation \eqref{eq:rtsofunity} (with $a=0$). Thus $RCS=0$ and similarly $S^*CR^T=0$.

Next we consider $S^*CS$.
The $ (a,b)^{th} $ $r\times r$ block of $ S^*CS$ is given by
\[
 {\sum\limits_{n = 0}^{{p^N} - 1} {\sum\limits_{m = 0}^{{p^N} - 1} {{C_m}{\omega ^{(m + n){\gamma _a}}}{\omega ^{ - n{\gamma _b}}}} } } 
 =\left\{ \begin{array}{cl}
 {{p^N}\sum\limits_{m = 0}^{{p^N} - 1} {{C_m}{\omega ^{m{\gamma _a}}}} }=p^NB_a& \text{ if }a=  b\\
 {\sum\limits_{m = 0}^{{p^N} - 1} {{C_m}{\omega ^{m{\gamma _a}}}\sum\limits_{n = 0}^{{p^N} - 1} {{\omega ^{n({\gamma _a} - {\gamma _b})}}} } }=0 & \text{ if } a \neq b,
 \end{array} \right.
 \]
using Equation \eqref{eq:rtsofunity} for entries where $a\neq b$. Thus,
\[
{S^*}CS = B_1 \oplus B_2 \oplus \ldots \oplus B_{p^N-p^{N-1}}.
\]
Finally, we have
\[
T^{-1}MT =
\left[\begin{array}{cc}
F & pH \\L & B_0
\end{array}\right] \oplus B_1\oplus B_2 \oplus  \ldots \oplus B_{p^N-p^{N-1}}.
\]

\end{proof}

Next we show that if a graph has a prime-power automorphism $\phi$ then any automorphism compatible matrix $M$ of the graph has the form given in Equation \eqref{eq:defineM} if we choose the transversal of the automorphism correctly.

\begin{prop}\label{prop:primepower}
Let $G$ be a graph with automorphism $\phi$ of order $p^N$ for some prime $p$ and integer $N>0$. Let $\cT_0$ be a transversal of the orbits of length $p^N$ of $\phi$, and let
\[
\tcT_0= \bigcup_{m = 0}^{p^{N-1}-1} \cT_m,
\]
 which is a transversal of the orbits of $\phi^p$ when restricted to only vertices contained in orbits of maximal size.  Let $M$ be an automorphism compatible matrix on $G$ and set
\[\begin{array}{cc}
\cT_F = \{ v \in V(G) \ | \ |\so_\phi(v)|< p^N\}, \  f = |\cT_F|,\ F = M[\cT_F, \cT_F],\\[2mm]
  \ H = M[\cT_F, \tcT_0], \ L = [\tcT_0,\cT_F],\ C_m = M[\cT_0, \cT_m], \ and \ D_s = M[\tcT_0, \tcT_s].\end{array}
\]
Then there is a permutation similarity transformation of $M$ which satisfies the conditions of Lemma \ref{lem:Dallas}.
\end{prop}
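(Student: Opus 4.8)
The plan is to produce an explicit reordering of $V(G)$ after which $M$ literally has the block shape of Equation~\eqref{eq:defineM}, with its trailing block double block-circulant as in Equation~\eqref{blockcirculant}; the required similarity is then conjugation by the corresponding permutation matrix. Write $r=|\cT_0|$. First I would record that the sets $\cT_0,\cT_1,\dots,\cT_{p^N-1}$ partition the maximal orbits of $\phi$: each $v\in\cT_0$ lies in an orbit of size exactly $p^N$, so $v,\phi(v),\dots,\phi^{p^N-1}(v)$ are distinct; distinct elements of $\cT_0$ lie in distinct orbits, and $\cT_0$ meets each maximal orbit once; hence the $\cT_m=\phi^m(\cT_0)$ are pairwise disjoint of common size $r$ with union $\{v:\lvert\so_\phi(v)\rvert=p^N\}$, while $\cT_F$ is everything else. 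Thus $V(G)=\cT_F\cup\cT_0\cup\cT_1\cup\dots\cup\cT_{p^N-1}$, and I would reorder the rows and columns of $M$ in exactly this order, simultaneously grouping the maximal part as $\tcT_s:=\cT_{sp^{N-1}}\cup\cT_{sp^{N-1}+1}\cup\dots\cup\cT_{sp^{N-1}+p^{N-1}-1}=\phi^{sp^{N-1}}(\tcT_0)$ for $s=0,1,\dots,p-1$, so that $\lvert\tcT_s\rvert=rp^{N-1}$ and these $p$ sets partition the maximal part.

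The single nontrivial ingredient --- and, I would emphasize, precisely the feature that makes prime-power automorphisms work where iterating the separable construction of~\cite{FSSW} fails (cf.\ Example~\ref{ex:1}) --- is that $\phi^{p^{N-1}}$ fixes every vertex of $\cT_F$ pointwise: a non-maximal orbit has size $p^k$ with $k\le N-1$, and since $p^k\mid p^{N-1}$ the map $\phi^{p^{N-1}}$ acts as the identity on it. Granting this, I would read off the block form using only automorphism compatibility of $M$. The top-left block is $M[\cT_F,\cT_F]=F$. For the first block row, the block over $\tcT_s$ is $M[\cT_F,\tcT_s]$; indexing $\tcT_s=\{\phi^{sp^{N-1}}(w):w\in\tcT_0\}$ by the ordering inherited from $\tcT_0$, compatibility of $M$ with the automorphism $\phi^{-sp^{N-1}}$ gives, for $v\in\cT_F$ and $w\in\tcT_0$, that $M_{v,\,\phi^{sp^{N-1}}(w)}=M_{\phi^{-sp^{N-1}}(v),\,w}=M_{v,w}$ by the observation. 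Hence $M[\cT_F,\tcT_s]=M[\cT_F,\tcT_0]=H$ for every $s$, and symmetrically $M[\tcT_s,\cT_F]=L$; this produces the $p$-fold repetition of $H$ and of $L$ demanded by Equation~\eqref{eq:defineM}.

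It then remains to check that the $rp^N\times rp^N$ trailing block $C=M[\bigcup_m\cT_m,\bigcup_m\cT_m]$ is double block-circulant. In the $\cT_m$-block ordering its $(i,j)$ block is $M[\cT_i,\cT_j]$, and for $u,v\in\cT_0$ compatibility with $\phi^{-i}$ gives $M_{\phi^i(u),\,\phi^j(v)}=M_{u,\,\phi^{j-i}(v)}$, which, since $\phi^{p^N}=\mathrm{id}$, is the $(u,v)$ entry of $M[\cT_0,\cT_{(j-i)\bmod p^N}]=C_{(j-i)\bmod p^N}$; this is exactly the fine circulant pattern in Equation~\eqref{blockcirculant}. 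Running the same computation one level coarser, the $(s,t)$ block in the $\tcT$-grouping is $M[\tcT_s,\tcT_t]=M[\tcT_0,\phi^{(t-s)p^{N-1}}(\tcT_0)]=M[\tcT_0,\tcT_{(t-s)\bmod p}]=D_{(t-s)\bmod p}$, the coarse circulant pattern, and the two partitions are nested by construction and hence compatible in the sense of Lemma~\ref{lem:Dallas}. A final dimension count $f+rp^N=\lvert V(G)\rvert$ confirms all hypotheses of Lemma~\ref{lem:Dallas} hold (for $N=1$ one is simply in the basic case of Theorem~\ref{thm:2}). I expect no step to be individually hard; the real care goes into fixing the within-block orderings so that the block identities above are equalities on the nose rather than merely up to permutation, after which the whole argument reduces to the one-line fact about $\phi^{p^{N-1}}$ together with repeated use of automorphism compatibility.
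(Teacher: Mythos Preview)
Your proposal is correct and follows essentially the same approach as the paper: reorder $V(G)$ as $\cT_F,\cT_0,\dots,\cT_{p^N-1}$, use automorphism compatibility with powers of $\phi$ to obtain the fine and coarse block-circulant structures on $C$, and invoke the fact that $\phi^{p^{N-1}}$ fixes $\cT_F$ pointwise to get the repeated $H$ and $L$ blocks. Your write-up is somewhat more explicit about entry-level indexing and the reason $\phi^{p^{N-1}}$ acts trivially on non-maximal orbits, but the argument is the same.
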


\begin{proof}
Let $r$ be the number of orbits of length $p^N$, thus $|\cT_k|=r$, and $|\tcT_k|=rp^{N-1}$.
Permute the rows and columns of $M$ so that they are labeled in the order $\cT_F,\cT_0,\cT_1,...,\cT_{p^N-1}$. Abusing notation, we will call this reordered matrix $M$ and let $C$ be the principal submatrix consisting of the last $rp^N$ rows and columns of M. Also let
\begin{equation}\label{deftC}
\cT_C = \cT_0 \cup \cT_1 \cup \ldots \cT_{p^{N}-1}.
\end{equation}
 Notice that  $\phi|_{\cT_C}: \cT_C \to \cT_C$. Hence, $C$ is compatible with $\phi|_{\cT_C}$, since
\[
M[\cT_s,\cT_t]=M[\phi(\cT_s),\phi(\cT_t)]=
\left\{ \begin{array}{ll}
M[\cT_{s+1},\cT_{t+1}] & \text{ if } s, t \neq p^{N}-1\\
M[\cT_{0},\cT_{t+1}] & \text{ if } s = p^{N-1}, t \neq p^{N}-1\\
M[\cT_{s+1},\cT_{0}] & \text{ if } s\neq p^{N-1},  t = p^{N}-1\\
M[\cT_{0},\cT_{0}] & \text{ if } s, t = p^{N}-1\end{array}\right.
\]  Thus $C$ is a block-circulant matrix made up of $r\times r$ blocks.

Since $C$ is automorphism compatible with $\phi|_{\cT_C}$, $C$ must also be automorphism compatible with $\phi^{p^{N-1}}|_{\cT_C}$. Thus
\[
M[\tcT_s,\tcT_t]=M[\phi^{p^{N-1}}(\tcT_s),\phi^{p^{N-1}}(\tcT_t)]=
\left\{ \begin{array}{ll}
M[\tcT_{s+1},\tcT_{t+1}] & \text{ if } s, t \neq p-1\\
M[\tcT_{0},\tcT_{t+1}] & \text{ if } s = p-1, t \neq p-1\\
M[\tcT_{s+1},\tcT_{0}] & \text{ if } s \neq p-1, t = p-1\\
M[\tcT_{0},\tcT_{0}] & \text{ if } s, t = p-1\\
\end{array}\right.
\]
implying $C$ is also block circulant with $rp^{N-1}\times rp^{N-1}$ blocks.

Notice that $\phi^{p^{N-1}}$ fixes $\cT_f$, so that
\[\begin{array}{l}
H=M[\cT_F,\tcT_0]=M[\phi^{mp^{N-1}}(\cT_F),\phi^{mp^{N-1}}(\tcT_0)]= M[\cT_F,\tcT_m], \ and \\[1mm]
L=M[\tcT_0,\cT_F]=M[\phi^{mp^{N-1}}(\tcT_0),\phi^{mp^{N-1}}(\cT_F)]= M[\tcT_m,\cT_F].
\end{array}
\]
Thus, $M$ satisfies the hypotheses of  Lemma \ref{lem:Dallas}.
\end{proof}

Given a graph $G$ with a prime-powered automorphism $\phi$ our goal is to equitably decompose this graph, or equivalently the associated automorphism compatible matrix $M$, by sequentially decomposing $M$ into smaller and smaller matrices. The way we do this is to first use Lemma \ref{lem:Dallas} and Proposition \ref{prop:primepower} to decompose $M$ into the product
\[
\tilde{M} \oplus B_1 \oplus B_2 \oplus \cdots \oplus B_{p^N-p^{N-1}}.
\]
 By virtue of the way in which this decomposition is carried out the smaller matrix $\tilde{M}$ also has a ``smaller" automorphism $\psi$ that can similarly be used to decompose the matrix $\tilde{M}$, which we demonstrate in the following proposition.

\begin{prop}\label{prop:primepowerautom}
Assume the graph $G$, the matrix $M$, and the automorphism $\phi$ satisfy the hypotheses in Proposition \ref{prop:primepower}. Then there exists an  automorphism $\psi\in\Aut(\tilde{M})$ of order $ p^{N-1}$ where
\[
\tM = \left[\begin{array}{rr} F & pH \\ L & B_0 \end{array}\right]
\]
is the matrix described in Lemma \ref{lem:Dallas}.
\end{prop}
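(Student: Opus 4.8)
The plan is to exhibit an explicit automorphism $\psi$ of $\tM$ by descending one level in the prime-power tower: since $\phi$ permutes the $r$ orbits of length $p^N$ cyclically on each orbit, the matrix $B_0 = \sum_{m=0}^{p-1} D_m$ inherits a residual block-circulant structure coming from $\phi^p$, and $\psi$ should be the permutation that realizes the action of $\phi^p$ on the transversal $\tcT_0$ (which, recall, is itself a transversal of the $\phi^p$-orbits of maximal size, these now having length $p^{N-1}$). Concretely, I would first note that $\tcT_0 = \bigcup_{m=0}^{p^{N-1}-1}\cT_m$ decomposes further as a disjoint union of $\phi^p$-orbit-transversal pieces: writing $\cU_0$ for a transversal of the $\phi^p$-orbits of length $p^{N-1}$ (which can be taken to be $\cT_0$ itself reindexed) and $\cU_\ell = \{\phi^{p\ell}(v) : v\in \cU_0\}$, we have $\tcT_0 = \cU_0 \cup \cU_1 \cup \cdots \cup \cU_{p^{N-1}-1}$. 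Then I would define $\psi$ on $V(\tM) = \cT_F \cup \tcT_0$ to fix every vertex of $\cT_F$ and to send $\cU_\ell \to \cU_{\ell+1}$ (indices mod $p^{N-1}$) by mimicking $\phi^p$, i.e. $\psi(\phi^{p\ell}(v)) = \phi^{p(\ell+1)}(v)$ for $v\in\cU_0$. Since $\phi$ has order $p^N$, the map $\phi^p$ has order $p^{N-1}$, and because $\psi$ is built to track it faithfully on a transversal while fixing $\cT_F$, $\psi$ has order exactly $p^{N-1}$.

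Next I would verify that $\psi \in \Aut(\tM)$, i.e. that $\tM_{\psi(i),\psi(j)} = \tM_{i,j}$ for all $i,j$. This splits into the four block cases according to whether $i,j$ lie in $\cT_F$ or in $\tcT_0$. On the $\cT_F\times\cT_F$ block $\tM$ equals $F = M[\cT_F,\cT_F]$ and $\psi$ fixes $\cT_F$, so there is nothing to check. On the mixed blocks $\tM$ equals $pH$ and $L$; here I would use the identities $H = M[\cT_F,\tcT_m]$ and $L = M[\tcT_m,\cT_F]$ established at the end of the proof of Proposition~\ref{prop:primepower} (coming from the fact that $\phi^{p^{N-1}}$ fixes $\cT_F$) together with the compatibility of $M$ with powers of $\phi$ to move indices within a $\cU_\ell\to\cU_{\ell+1}$ shift. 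The substantive case is the $\tcT_0\times\tcT_0$ block, where $\tM$ equals $B_0 = \sum_{m=0}^{p-1} D_m$ with $D_s = M[\tcT_0,\tcT_s]$: I would show that the $(\cU_a,\cU_b)$ sub-block of $B_0$ depends only on $a-b \bmod p^{N-1}$, which is exactly the statement that $B_0$ is block-circulant over the $\cU$-partition and hence invariant under the cyclic shift $\psi$. This follows because $D_s$, being a principal-type submatrix of the double block-circulant $C$, is itself block-circulant in the finer $\cU$-partition (this is precisely the "second way of partitioning $C$" from Lemma~\ref{lem:Dallas}, applied one level down), and summing block-circulant matrices preserves block-circulance.

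The main obstacle I anticipate is bookkeeping the two nested cyclic structures consistently — making sure that the reindexing which identifies $\tcT_0$ with a $\phi^p$-transversal-union $\cU_0\cup\cdots\cup\cU_{p^{N-1}-1}$ is compatible with the original labeling $\cT_0,\dots,\cT_{p^N-1}$, so that the claim "$D_s$ is block-circulant in the $\cU$-partition" is actually correct and not off by a permutation. The cleanest way to sidestep this is to avoid new notation entirely: define $\psi$ directly by $\psi(v) = \phi^{p}(v)$ for $v$ in a maximal-length orbit and $\psi(v)=v$ otherwise, observe it is a well-defined permutation of $\tcT_0\cup\cT_F$ of order $p^{N-1}$, and then check compatibility using only the block identities already derived in Propositions~\ref{prop:primepower} (namely that $C$ is block-circulant with $rp^{N-1}\times rp^{N-1}$ blocks $D_s$, together with $H=M[\cT_F,\tcT_m]$, $L=M[\tcT_m,\cT_F]$). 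With $\psi$ defined this way the relation $\tM_{\psi(i),\psi(j)}=\tM_{i,j}$ reduces to the already-proved circulance of the $D_s$ and the $\phi^{p^{N-1}}$-invariance of $H,L$, so no genuinely new computation is needed beyond unwinding definitions.
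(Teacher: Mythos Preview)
There is a genuine gap: the permutation you propose, $\psi(v)=\phi^p(v)$ on vertices of maximal $\phi$-orbit length, is not a well-defined permutation of the index set $\cT_F\cup\tcT_0$ of $\tM$. Recall $\tcT_0=\cT_0\cup\cT_1\cup\cdots\cup\cT_{p^{N-1}-1}$ and $\phi^p(\cT_m)=\cT_{m+p}$; for $m\ge p^{N-1}-p$ this lands outside $\tcT_0$, so $\phi^p$ simply does not preserve $\tcT_0$. The related claim that $\tcT_0=\cU_0\cup\cdots\cup\cU_{p^{N-1}-1}$ with $\cU_\ell=\phi^{p\ell}(\cT_0)=\cT_{p\ell}$ is false for the same reason: the sets $\cT_{p\ell}$ are not the sets $\cT_0,\dots,\cT_{p^{N-1}-1}$. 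The residual block-circulant structure of $B_0$ is with respect to the partition $\cT_0,\dots,\cT_{p^{N-1}-1}$ (one checks $(B_0)_{a,b}=\sum_{s=0}^{p-1}C_{sp^{N-1}+b-a}$ depends only on $b-a$), and the shift realizing it is $\cT_m\mapsto\cT_{m+1}$, i.e.\ the action of $\phi$ (with a wrap from $\cT_{p^{N-1}-1}$ back to $\cT_0$), not of $\phi^p$.

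A second error is letting $\psi$ fix $\cT_F$ pointwise. The set $\cT_F$ contains vertices in $\phi$-orbits of lengths $p,p^2,\dots,p^{N-1}$, and the mixed blocks $pH,L$ are built from $M$ via the compatibility $M_{\phi(i),\phi(j)}=M_{i,j}$. If $\psi(i)=i$ for $i\in\cT_F$ but $\psi(j)=\phi(j)$ for $j\in\tcT_0$, then $\tM_{\psi(i),\psi(j)}=pM_{i,\phi(j)}=pM_{\phi^{-1}(i),j}$, which equals $\tM_{i,j}=pM_{i,j}$ only when $\phi$ fixes $i$. The paper instead defines $\psi$ to act as $\phi$ on all of $\cT_F\cup\cT_0\cup\cdots\cup\cT_{p^{N-1}-2}$ and as $\phi^{1-p^{N-1}}$ on $\cT_{p^{N-1}-1}$; this is a genuine permutation of $\cT_F\cup\tcT_0$ of order exactly $p^{N-1}$, and the compatibility check on all four block types then goes through directly from the $\phi$-compatibility of $M$ and the double block-circulant structure already established.
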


\begin{proof}
Let the matrix $M$ be the automorphism compatible matrix with the reordering:
$\cT_F, \cT_0, \cT_1, \ldots, \cT_{p^{N-1}}$.  Recall that under this vertex ordering, each orbit of maximal length of $\phi$ looks like
\[
\so_\phi(i) = (i, i + r, i + 2r, \ldots, i + rp^N).
\]
We define a map $\psi$ on $\cT_F \cup \tcT_0$ (where $\tcT_0$ is defined in Proposition \ref{prop:primepower}) by
\begin{equation}\label{eq:prop33}
\psi(i) = \begin{cases}
\phi(i) & i \notin \cT_{p^{N-1}-1}\\
\phi^{1-p^{N-1}}(i) & otherwise.
\end{cases}
\end{equation}
It is straightforward to verify that $\phi(\cT_F) = \cT_F$, $\phi^k(\cT_m) = \cT_{k+m}$, so that ${\psi: \cT_F \cup \tcT_0 \to \cT_F \cup \tcT_0}$.
%
%
%
%
%
We wish to show for all $i,j\in\cT_F \cup \tcT_0$  $\tM(\psi(i), \psi(j)) = \tM(i,j)$. To do this we first consider the  case where $i$ or $j$ is in $\cT_F$. For $\epsilon_1,\epsilon_2\in\{0,1\}$, ${\tM}(\psi(i), \psi(j))$ is given below.
\[ \begin{array}{lr}
 M(\phi(i),\phi(j)) = M(i,j) = \tM(i,j) &  i \in \cT_F \ j \in \cT_{F} \\
{\tM}(\phi(i), \phi^{1-\epsilon_1 p^{N-1}}(j)) ={\tM}(\phi(i), \phi(j)) = pM(\phi(i),\phi(j)) = pM(i,j) = \tM(i,j) & i \in \cT_F \ j \notin \cT_{F} \\
{\tM}(\phi^{1-\epsilon_2 p^{N-1}}(i), \phi(j)) ={\tM}(\phi(i), \phi(j)) = M(\phi(i),\phi(j)) = M(i,j) = \tM(i,j) & i \notin \cT_F \ j \in \cT_{F} \\
\end{array}
\]
The first equalities in the second and third cases are valid because of the block circulant nature of $M$, as described in Proposition \ref{prop:primepower} and Lemma \ref{lem:Dallas}.

Finally, we consider the case where neither $i$ nor $j$ is in $\cT_F$. Then for $\epsilon_1,\epsilon_2\in\{0,1\}$,
\[\begin{array}{c}
\displaystyle{\tM(\psi(i), \psi(j)) = \tM(\phi^{1 -\epsilon_1 p^{N-1}}(i), \phi^{1 -\epsilon_2 p^{N-1}}(j))= \sum_{m = 0}^{p-1} M(\phi^{1 -\epsilon_1 p^{N-1}}(i), \phi^{1+ (m-\epsilon_2) p^{N-1}}(j)) }\\[1mm]
\displaystyle{=\sum_{m = 0}^{p-1} M(i, \phi^{(m+\epsilon_1-\epsilon_2) p^{N-1}}(j)) = \sum_{m = 0}^{p-1} M\left(i, \left(\phi^{p^{N-1}}\right)^m(j)\right)  = \tM(i, j),
}\\[1mm]
\end{array}
\]
where the second to last equality is true because the sum passes through all $p$ distinct powers of $\phi^{N-1}$ exactly once, and the addition of $\epsilon_1$ and $\epsilon_2$ only changes the order in which this happens.
\end{proof}

Each time we use Propositions \ref{prop:primepower} and \ref{prop:primepowerautom} on a matrix $M$ with automorphism $\phi$ of order $|\phi|=p^N$ we obtain a smaller matrix $\tilde{M}$ with automorphism $\psi$ of order $|\psi|=p^{N-1}$. It is in fact possible to sequentially repeat this process until we ``run out" of powers of $p$. The result is the equitable decomposition of the graph $G$ over $\phi$.


\begin{thm}\label{thm:fullprimedecomp}\textbf{(Equitable Decompositions over Prime-Powered Automorphisms)}
Suppose $G$ is a graph with automorphism $\phi$ where $|\phi|=p^N$ for some prime $p$ and $N\geq 1$. If $M$ is an automorphism compatible matrix of $G$ then by repeated application of Propositions \ref{prop:primepower} and \ref{prop:primepowerautom}, we obtain the equitable decomposition
\[
M_\phi
\oplus\hM_1
 \oplus \hM_2 \oplus \cdots
\oplus\hM_N,
\]
where $M_\phi$ is the divisor matrix associated with $\phi$ and $\hM_i = \left(_iB_1 \oplus\ _iB_2 \oplus \cdots \oplus\ _iB_{p^{N-i+1}-p^{N-i}}\right)$ where $_iB_j$ has size $r_i\times r_i$, where $r_i$ is the number of orbits of $\phi$ with length greater than or equal to $p^{N-i+1}$ and $1\leq j \leq p^{N-i+1}-p^{N-i}$
\end{thm}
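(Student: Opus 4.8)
The plan is to induct on $N$, the exponent of the prime power $|\phi| = p^N$, with the work of a single inductive step already packaged in Propositions \ref{prop:primepower} and \ref{prop:primepowerautom} together with Lemma \ref{lem:Dallas}. The base case $N=1$ is exactly Theorem \ref{thm:2} (a prime-power automorphism with $N=1$ is a basic automorphism), which gives $M \sim M_\phi \oplus B_1 \oplus \cdots \oplus B_{p-1}$; here the $B_j$ are all $r \times r$ where $r = r_1$ is the number of orbits of length $p$, and there are $p-1 = p^{N} - p^{N-1}$ of them, matching the claimed index ranges with $N=1$.

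For the inductive step, assume the result holds for all prime-power automorphisms of exponent $N-1$. Given $G$, $M$, and $\phi$ with $|\phi| = p^N$, first apply Proposition \ref{prop:primepower} to choose the transversal correctly and put $M$ (up to permutation similarity) in the block form of Equation \eqref{eq:defineM} satisfying the hypotheses of Lemma \ref{lem:Dallas}. Lemma \ref{lem:Dallas} then yields an invertible $T$ with
\[
T^{-1} M T = \tilde{M} \oplus B_1 \oplus \cdots \oplus B_{p^N - p^{N-1}},
\]
where $\tilde{M} = \left[\begin{smallmatrix} F & pH \\ L & B_0 \end{smallmatrix}\right]$ and each $B_j$ ($1 \le j \le p^N - p^{N-1}$) has size $r \times r$ with $r = r_1$ the number of orbits of maximal length $p^N$. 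These $B_j$ are precisely the blocks $\hM_1 = \bigl(\,_1B_1 \oplus \cdots \oplus\ _1B_{p^N - p^{N-1}}\bigr)$ claimed in the theorem for $i=1$. Next invoke Proposition \ref{prop:primepowerautom}: $\tilde{M}$ carries an automorphism $\psi \in \Aut(\tilde{M})$ of order $p^{N-1}$, and $\tilde{M}$ is automorphism compatible with $\psi$ (that is exactly what Proposition \ref{prop:primepowerautom} establishes). So the inductive hypothesis applies to $(\tilde M, \psi)$, giving
\[
\tilde{M} \sim (\tilde{M})_\psi \oplus \hM_2' \oplus \cdots \oplus \hM_N',
\]
where $(\tilde M)_\psi$ is the divisor matrix of $\psi$ and $\hM_i'$ ($2 \le i \le N$, reindexed) is a direct sum of $p^{N-i+1} - p^{N-i}$ blocks of size $s_i \times s_i$, with $s_i$ the number of orbits of $\psi$ of length $\ge p^{N-i+1}$.

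Two bookkeeping facts close the argument, and I expect the second to be the main obstacle requiring care. First, the divisor matrix must propagate correctly: $(\tilde{M})_\psi = M_\phi$, the divisor matrix of the original equitable partition of $G$ by the orbits of $\phi$. This follows because the orbits of $\psi$ on the vertex set indexing $\tilde M$ correspond, under the way $\tilde M$ was built (fixed block $\cT_F$ plus the contracted transversal $\tcT_0$ with its $H$ scaled by $p$ and the $D_m$ summed into $B_0$), exactly to the orbits of $\phi$ on $G$, and the divisor-matrix construction is transitive under this kind of iterated contraction — one should verify the row sums match, i.e. that contracting by $\psi$ after contracting by $\phi^{p^{N-1}}$-type data gives the same $D_{i,j}$ as contracting $G$ directly by the $\phi$-orbits. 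Second, and this is the delicate part, one must check that the orbit-length bookkeeping is consistent: an orbit of $\phi$ on $G$ of length $p^{N-i+1}$ (for $i \ge 2$) descends to an orbit of $\psi$ on $\tilde M$ of length $p^{N-i+1}$ as well (since $\psi$ has order $p^{N-1}$ and the contraction only collapses the maximal-length $p^N$-orbits down to length $p^{N-1}$, leaving shorter orbits' lengths unchanged), so that $s_i = r_i$, the number of orbits of $\phi$ of length $\ge p^{N-i+1}$, for every $i \ge 2$. Combining, $M \sim M_\phi \oplus \hM_1 \oplus \hM_2 \oplus \cdots \oplus \hM_N$ with $\hM_i$ a direct sum of $p^{N-i+1} - p^{N-i}$ blocks of size $r_i \times r_i$, as claimed; the spectral statement $\sigma(M) = \sigma(M_\phi) \cup \sigma(\hM_1) \cup \cdots \cup \sigma(\hM_N)$ is then immediate from similarity and the spectrum of a direct sum. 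The fact that the resulting $M_\phi$ is genuinely the divisor matrix of an equitable partition (hence "equitable decomposition") is guaranteed because the orbits of $\phi$ always form an equitable partition, as recalled after Definition \ref{def:EP}.
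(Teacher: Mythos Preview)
Your plan is correct and matches the paper's argument in substance: both iterate the single-step decomposition from Lemma \ref{lem:Dallas} and Propositions \ref{prop:primepower}--\ref{prop:primepowerautom} and then check that the surviving top-left block is $M_\phi$; you simply package the iteration as induction on $N$ while the paper unrolls it explicitly. One calibration worth noting: you flag the orbit-length bookkeeping $s_i = r_i$ as the ``main obstacle,'' but that part is routine (shorter orbits sit in $\cT_F$ unchanged, and each $p^N$-orbit contracts to a single $\psi$-orbit of length $p^{N-1}$); the paper instead spends its effort on the verification you wave at --- that $(\tilde M)_\psi = M_\phi$ --- by tracking an entry $\tilde M_N(i,j)$ through the successive stages and showing it equals $\sum_{r\in\so_\phi(j)} M_{i,r}$, which is exactly the row-sum check you say ``one should verify.''
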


\begin{proof}
 By Proposition \ref{prop:primepower} and Lemma \ref{lem:Dallas} we can decompose $M$ into $M_1=\tM_1\oplus\hM_1$, where $\tM_1=\tM$ and $\hM_1= {_1B_1}\oplus{ _1B_2}\oplus \dots\oplus {_1B_{p^N-p^{N-1}}}$.  Also let $\psi$ be the automorphism of $\tM_1$ as in Proposition \ref{prop:primepowerautom}.

Next we will use $\psi$ to decompose $\tM_1$. We pick a transversal $\cT_0$ of the orbits  \textit{of maximal length} of $\psi$, and $\cT_F$ which will contain all the indices belonging to orbits of length less than $p^{N-1}$. If we perform a permutation similarity transformation on $M_1$ so that our indices now appear in the order $\cT_F, \cT_0, \ldots, \cT_{p^{N-1}}$, we can use Proposition \ref{prop:primepower}, and Lemma \ref{lem:Dallas} to complete another decomposition.  We repeat this process $N$ times.

We need to show that   the block matrix $\tM_N$
appearing in the upper left portion of the final decomposition satisfies $\tM_N = M_\phi$, where $M_\phi$ is the divisor matrix obtained from an equitable partition of the original matrix $M$ using the orbits of $\phi$ as the partition set.
To do so, recall that
\[
M_\phi(i,j) = \sum_{r \in \so_\phi(j)} M_{i,r}.
\]
It is easy to verify that each of the indices of the rows and columns of $\tM_N$ correspond to a distinct orbit of length $p^k$.

Suppose that $j$ is an index appearing in $\tM_N$ and in an orbit of length $p^k$ for some $k>0$.  Then in the first $N-k$ decompositions, $j$ will be placed in the set $\cT_F$. Thus, if $\tM_{\kappa-1}$ is the matrix created at the end of the $(\kappa-1)^{th}$ decomposition and $\tilde{M}$ is the matrix created at the end of the $\kappa^{th}$ decomposition, and $1 \leq \kappa \leq N-k$, then  $\tM_{\kappa}(i,j) = \tM_{\kappa-1}(i,j)=M(i,j)$.


We consider the $(N-k+1)^{th}$ decomposition, which begins with an automorphism of order $p^{k}$. Examining the formulas in Equation \eqref{eq:sim_decomp} and \eqref{eq:mtilde} gives
\[
\tM_{N-k+1}(i,j) =  \sum_{m=0}^{p-1} M(i,\phi^{mp^{k-1}}(j)).
 \]
 The next decomposition will yield
 \[
\tM_{N-k+2}(i,j) =  \sum_{m_2=0}^{p-1} \tM_{N-k+1}(i,\phi^{m_2p^{k-2}}(j))= \sum_{m_2=0}^{p-1}\sum_{m_1=0}^{p-1} M(i,\phi^{m_1p^{k-1} + m_2p^{k-2}}(j)).
 \]
Thus, the entries in $\tM_N$ are determined by
\[
\tM_N= \sum_{m_1, m_2, \ldots, m_k  =  0}^{p-1} M(i,\phi^{m_1p^{k-1} + m_2p^{k-2} + \ldots + m_k}(j)) = \sum_{r \in \so_\phi(j)} M_{i,r}.
\]
\end{proof}

We note here that in the case that $N=1$, this process is exactly the same  process outlined in  \cite{BFW} for equitably decomposing over a basic automorphism.

According to Theorem \ref{thm:fullprimedecomp} it is possible to equitably decompose a graph over any of its prime-power automorphisms. Although this is true it is probably not obvious at this point how this type of decomposition can be carried out. What follows is an algorithm detailing the steps involved in this process.

\begin{center}
\emph{Performing Equitable Decompositions Involving Prime-Power Automorphisms}
\end{center}
\begingroup\raggedright\leftskip=20pt\rightskip=20pt

For a graph $G$ with automorphism compatible matrix $M$ and automorphism $\phi$ with $|\phi|=p^N$, set $M_0  = M$, and $\phi_1 = \phi$. To begin we start with $i = 1$ in \emph{Step a.}\m

\vspace{0.1in}

\noindent\emph{\textbf{Step a:}} Choose $\mathcal{T}_F$ to be all elements of the graph G which are contained in orbits of $\phi_i$ with length less than $p^{N-i+1}$.  Choose $\mathcal{T}_0$ to be a transversal of all orbits of $\phi_i$ with length equal to $p^{N-i+1}$, and let
\[
\tilde{\mathcal{T}}_0=\{\mathcal{T}_0,\phi_i({\mathcal{T}_0}),\phi_i^2(\mathcal{T}_0),\dots,\phi_i^{p^{n-i}}({\mathcal{T}_0})\}
\]
and use this $\mathcal{T}_0$ and $\tilde{\mathcal{T}}_0$ to order the matrix $M_{i-1}$ as in Proposition \ref{prop:primepower}. 

\noindent\emph{\textbf{Step b:}} Form the matrix $T$ as described in Lemma \ref{lem:Dallas}. Perform the equitable decomposition of $M_{i-1}$ via a similarity transformation and define
\[
M_{i}=TM_{i-1}T^{-1}.
\]

\noindent\emph{\textbf{Step c:}} Extending the definition of $\psi$ {found in Equation \eqref{eq:prop33} which is described in the proof of Proposition 3.3}, we define $\phi_{i+1}$ by
\begin{equation}\label{eq:6}
\phi_{i+1}(k) = \begin{cases}
\phi_i(k) & k \in \cT_f\cup\cT_0\cup\cT_1\cup\dots\cup\cT_{p^{N-i}-2}\\
\phi_i^{1-p^{N-i}}(k) &  k \in \cT_{p^{N-i}-1}\\
k & otherwise.
\end{cases}
\end{equation}
If $i<N$, then set $i=i+1$ and return to {\it Step a}, otherwise the decomposition is complete.
\par\endgroup
 \vspace{.5cm}


To demonstrate how this algorithm is applied we return to our previous example,  the graph $G$ shown in Figure \ref{ex:12}, which we are now able to fully decompose.

\begin{example}
Consider the matrix $M$ and graph $G$ shown in Figure \ref{ex:12} previously considered in Example \ref{ex:1}. Here, $\phi=\phi_1$ is the automorphism $\phi_1=(1,2,3)(4,5,6,7,8,9,10,11,12)$, which has order $9=3^2$. Thus we will run through {\it Steps a-c} in our algorithm twice, {i.e. we go through two rounds of this algorithm.}
%
 Note that the matrix $M$ has the form guaranteed by Proposition \ref{prop:primepower}. We set $i=1$ and proceed to {\it Step a}.

\begin{center}
\emph{Round 1}
\end{center}
\emph{Step a}:Here,  $\mathcal{T}_F=\{1,2,3\}$ and choose $\mathcal{T}_0=\{4\}$, which gives us $\tilde{\mathcal{T}}_0=\{4,5,6\}$. Thus the adjacency matrix $M_0 = M$ for $G$ given in Figure \ref{ex:12} is already ordered appropriately. \\
\emph{Step b}:
Following Proposition \ref{prop:primepower}, we can decompose {$M_0$ using a similarity transformation to obtain $M_0 ~M_1= \tM_1 + \hM_1$}, where

\[
\tM_1 = \left[\begin{array}{rrr|rrr}
2&1&1&1&0&0\\
1&2&1&0&1&0\\
1&1&2&0&0&1\\\hline
3&0&0&0&1&1\\
0&3&0&1&0&1\\
0&0&3&1&1&0
\end{array}\right], \text{ and }\hM_1 = [\lambda_1]\oplus[\lambda_2]\oplus[\lambda_3]\oplus[\lambda_4]\oplus[\lambda_5]\oplus[\lambda_6]
\]
where (with $\omega = e^{\frac{2 \pi i }{9}}$) Equation \eqref{eq:mtilde} gives
\[
\begin{array}{l}
\lambda_1 = \omega + \omega^3 + \omega^6 +\omega^8 \approx-0.652, \\
\lambda_2= \omega^2 + \omega^6 + \omega^{12} +\omega^{16} \approx-2.879, \\
\lambda_3= \omega^4 + \omega^{12} + \omega^{24} +\omega^{32}\approx0.532, \\
\lambda_4= \omega^5 + \omega^{15} + \omega^{30} +\omega^{40}\approx0.532, \\
\lambda_5= \omega^7 + \omega^{21} + \omega^{42} +\omega^{56}\approx-2.879,\\
\lambda_6= \omega^8 + \omega^{24} + \omega^{48} +\omega^{64}\approx-0.652,
\end{array}
\]
 which are six eigenvalues of the adjacency matrix $M$. The associated {adjacency graph},  the graph whose adjacency matrix is $\tilde{M}_1\oplus\hat{M}_1$, is shown in Figure \ref{ex:21}.\\

\begin{figure}[h!]
\begin{center}
\resizebox{.6\textwidth}{!}{
\begin{tikzpicture}[line cap=round,line join=round,>=triangle 45,x=1.0cm,y=1.0cm, scale = .4]
\draw [line width=1.2pt] (3.,2.)-- (1.42,-0.74);
\draw [line width=1.2pt] (1.42,-0.74)-- (4.582909606369362,-0.738320137979414);
\draw [line width=1.2pt] (4.582909606369362,-0.738320137979414)-- (3.,2.);
\draw [->, shift={(8.746530801304676,3.9975601362061384)},line width=1.2pt]  plot[domain=2.808109490764957:3.47613804206866,variable=\t]({1.*6.083819749803839*cos(\t r)+0.*6.083819749803839*sin(\t r)},{0.*6.083819749803839*cos(\t r)+1.*6.083819749803839*sin(\t r)});
\draw [->,shift={(-2.7619038286835558,3.9914478583805644)},line width=1.2pt]  plot[domain=-0.3327703910467745:0.33383261670080516,variable=\t]({1.*6.096343191097985*cos(\t r)+0.*6.096343191097985*sin(\t r)},{0.*6.096343191097985*cos(\t r)+1.*6.096343191097985*sin(\t r)});
\draw [->, shift={(2.574873153942068,-6.730834430650846)},line width=1.2pt]  plot[domain=1.7612336156607167:2.4286188147797056,variable=\t]({1.*6.1011334338111105*cos(\t r)+0.*6.1011334338111105*sin(\t r)},{0.*6.1011334338111105*cos(\t r)+1.*6.1011334338111105*sin(\t r)});
\draw [->,shift={(-3.190222966731918,3.2423685501799246)},line width=1.2pt]  plot[domain=4.902310947532458:5.570726790087551,variable=\t]({1.*6.09207805863029*cos(\t r)+0.*6.09207805863029*sin(\t r)},{0.*6.09207805863029*cos(\t r)+1.*6.09207805863029*sin(\t r)});
\draw [->,shift={(3.4409228960059806,-6.713365807470568)},line width=1.2pt]  plot[domain=0.7135095119397563:1.3819478161074714,variable=\t]({1.*6.083198533596579*cos(\t r)+0.*6.083198533596579*sin(\t r)},{0.*6.083198533596579*cos(\t r)+1.*6.083198533596579*sin(\t r)});
\draw [->,shift={(9.177088828763083,3.23416245566517)},line width=1.2pt]  plot[domain=3.854546186837524:4.524096448389289,variable=\t]({1.*6.073475173595705*cos(\t r)+0.*6.073475173595705*sin(\t r)},{0.*6.073475173595705*cos(\t r)+1.*6.073475173595705*sin(\t r)});
\draw [->,shift={(2.980
3003568273967,6.988860475994255)},line width=1.2pt]  
  plot[domain=-1.5532143950535158:4.314098663792105,variable=\t]({1.*1.*cos(\t r)+0.*1.*sin(\t r)},{0.*1.*cos(\t r)+1.*1.*sin(\t r)}) ;
\draw [->,shift={(8.89586008462023,-3.249577222650493)},line width=1.2pt]    plot[domain=-3.6856496994623047:2.1335046414728542,variable=\t]({1.*1.*cos(\t r)+0.*1.*sin(\t r)},{0.*1.*cos(\t r)+1.*1.*sin(\t r)})  ;
\draw [->,shift={(-2.911791543957258,-3.2303056150093457)},line width=1.2pt]   plot[domain=0.512248008742023:6.265411724401079,variable=\t]({1.*1.*cos(\t r)+0.*1.*sin(\t r)},{0.*1.*cos(\t r)+1.*1.*sin(\t r)});
\draw [line width=1.2pt] (2.9978813827489854,5.9890150341745985)-- (-2.0401466673309936,-2.7401676660080625);
\draw [line width=1.2pt] (-2.0401466673309936,-2.7401676660080625)-- (8.040244329607463,-2.7319657083137923);
\draw [line width=1.2pt] (8.040244329607463,-2.7319657083137923)-- (2.9978813827489854,5.9890150341745985);
\begin{scriptsize}
\draw [fill=blue] (3.,2.) circle (4.5pt);
\draw[color=blue] (3.3797862649889464,2.1344492655878913) node {${\bf 1}$};
\draw [fill=blue] (1.42,-0.74) circle (4.5pt);
\draw[color=blue] (1.0607506144093768,-0.2652055658528048) node {${\bf 2}$};
\draw [fill=blue] (4.582909606369362,-0.738320137979414) circle (4.5pt);
\draw[color=blue] (4.621528751385895,-0.291008305975823) node {${\bf3}$};
\draw [fill=blue] (2.9978813827489854,5.9890150341745985) circle (4.5pt);
\draw[color=blue] (3.5797862649889464,5.901649323548554) node {${\bf4}$};
\draw [fill=blue] (-2.0401466673309936,-2.7401676660080625) circle (4.5pt);
\draw[color=blue] (-2.1903946410909225,-2.045594634341063) node {${\bf5}$};
\draw [fill=blue] (8.040244329607463,-2.7319657083137923) circle (4.5pt);
\draw[color=blue] (8,-2.0972001145870998) node {${\bf6}$};

\draw[color=black] (5.9797862649889464,2.101649323548554) node {$1$};
\draw[color=black] (0,2.101649323548554) node {$1$};
\draw[color=black] (3,-3.101649323548554) node {$1$};
\draw[color=black] (10.1,-4.301649323548554) node {$2$};
\draw[color=black] (-4.1,-4.301649323548554) node {$2$};
\draw[color=black] (3,8.5) node {$2$};
\draw[color=black] (3.7,4.1) node {$3$};
\draw[color=black] (2.3,4.1) node {$1$};
\draw[color=black] (-.1,-.8) node {$3$};
\draw[color=black] (0.5,-2) node {$1$};
\draw[color=black] (6.1,-.8) node {$3$};
\draw[color=black] (5.5,-2) node {$1$};
\draw[color=black] (2.1,1) node {$1$};
\draw[color=black] (4,1) node {$1$};
\draw[color=black] (3,-1) node {$1$};

\draw [fill=blue] (11,3.) circle (4.5pt);
\draw[color=blue] (11.0797862649889464,2.5344492655878913) node {${\bf 7}$};
\draw (9,4.1) node {$\lambda_1$};
\draw [->, shift={(10,3)},line width=1.2pt]  plot[domain=0:5.47613804206866,variable=\t]({1.*cos(\t r)},{1*sin(\t r)});
\draw [fill=blue] (14,3.) circle (4.5pt);
\draw[color=blue] (14.0797862649889464,2.5344492655878913) node {${\bf 8}$};
\draw (12,4.1) node {$\lambda_2$};
\draw [->, shift={(13,3)},line width=1.2pt]  plot[domain=0:5.47613804206866,variable=\t]({1.*cos(\t r)},{1*sin(\t r)});
\draw [fill=blue] (17,3.) circle (4.5pt);
\draw[color=blue] (17.0797862649889464,2.5344492655878913) node {${\bf 9}$};
\draw (15,4.1) node {$\lambda_3$};
\draw [->, shift={(16,3)},line width=1.2pt]  plot[domain=0:5.47613804206866,variable=\t]({1.*cos(\t r)},{1*sin(\t r)});
\draw [fill=blue] (11,0.) circle (4.5pt);
\draw[color=blue] (11.0797862649889464,-.5344492655878913) node {${\bf 10}$};
\draw (9,1.1) node {$\lambda_4$};
\draw [->, shift={(10,0)},line width=1.2pt]  plot[domain=0:5.47613804206866,variable=\t]({1.*cos(\t r)},{1*sin(\t r)});
\draw [fill=blue] (14,0.) circle (4.5pt);
\draw[color=blue] (14.0797862649889464,-.5344492655878913) node {${\bf 11}$};
\draw (12,1.1) node {$\lambda_5$};
\draw [->, shift={(13,0)},line width=1.2pt]  plot[domain=0:5.47613804206866,variable=\t]({1.*cos(\t r)},{1*sin(\t r)});
\draw [fill=blue] (17,0.) circle (4.5pt);
\draw[color=blue] (17.0797862649889464,-.5344492655878913) node {${\bf 12}$};
\draw (15,1.1) node {$\lambda_6$};
\draw [->, shift={(16,0)},line width=1.2pt]  plot[domain=0:5.47613804206866,variable=\t]({1.*cos(\t r)},{1*sin(\t r)});

\end{scriptsize}
\end{tikzpicture}
}
\end{center}
\caption{The decomposition of the graph $G$ on 12 vertices using  the method outlined in Proposition \ref{prop:primepower} after Round 1.}\label{ex:21}
\end{figure}

\emph{Step c}: Now we form our new automorphism $\phi_2$ on $\tilde{M_2}$. According to Equation \eqref{eq:6} $\phi_2$ acts like $\phi$ on $\cT_F \cup \cT_0 \cup \cT_1 = \{1, 2, 3, 4, 5\}$, while $\phi_2(6) = \phi^{1-3^1}(6) = 4 $. Thus, $\phi_2=(1,2,3)(4,5,6)$. Now, we set $i=2$ and return to {\it Step a}.

\begin{center}
\emph{Round 2}\\
\end{center}

\noindent\emph{Step a}: Here,  
$\cT_f=\{ \}$ and we choose $\cT_0=\tilde \cT = \{1,4\}$.
\\
\emph{Step b}: We reorder the rows and columns of $\tilde{M_1}$ to correspond to our transversals.  The relevant block matrices are

\[
\begin{array}{c}
C_0 = \left[\begin{array}{rr}0 & 3 \\ 1 & 2  \end{array}\right],
C_1 =C_2 =  \left[\begin{array}{rr}1 & 0  \\ 0 & 1  \end{array}\right].
\end{array}
\]
Performing the equitable decomposition as described in Proposition \ref{prop:primepower} gives $M_2 = \tM_2 \oplus \hM_2\oplus \hM_1$, where
%
%
%
\begin{equation}\label{eq:103}
\tM_2=
\left[\begin{matrix}
2&3\\
1&4
\end{matrix}\right], \text{ and }
\hM_2 =
\left[\begin{array}{rr}
-1&3\\
1&1
\end{array}\right]
\oplus
\left[\begin{array}{rr}
-1&3\\
1&1
\end{array}\right] 
\end{equation}
with associated adjacency graph in Figure \ref{ex:22}.

 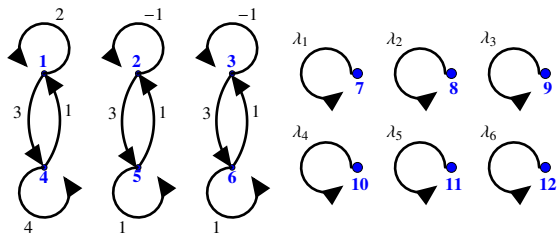
\begin{figure}[h!]
\begin{center}
\resizebox{.5\textwidth}{!}{
\begin{tikzpicture}[line cap=round,line join=round,>=triangle 45,x=1.0cm,y=1.0cm,scale = .5]
\draw [->,shift={(4.000882228420549,1.5022156216448062)},line width=1.2pt]  plot[domain=2.499012508004936:3.7660030032164338,variable=\t]({1.*2.499377469702013*cos(\t r)+0.*2.499377469702013*sin(\t r)},{0.*2.499377469702013*cos(\t r)+1.*2.499377469702013*sin(\t r)});
\draw [->,shift={(0.,1.5026143426529952)},line width=1.2pt]  plot[domain=-0.6443371736649031:0.642663994098626,variable=\t]({1.*2.50156947989587*cos(\t r)+0.*2.50156947989587*sin(\t r)},{0.*2.50156947989587*cos(\t r)+1.*2.50156947989587*sin(\t r)});
\draw [->,shift={(3.0032847116898718,1.5026143426529952)},line width=1.2pt]  plot[domain=-0.6451267165303944:0.6434527688088529,variable=\t]({1.*2.4989441380947057*cos(\t r)+0.*2.4989441380947057*sin(\t r)},{0.*2.4989441380947057*cos(\t r)+1.*2.4989441380947057*sin(\t r)});
\draw [->,shift={(7.035206606035682,1.4574110295747007)},line width=1.2pt]  plot[domain=2.4930216434192602:3.7630423236006214,variable=\t]({1.*2.5537514876998237*cos(\t r)+0.*2.5537514876998237*sin(\t r)},{0.*2.5537514876998237*cos(\t r)+1.*2.5537514876998237*sin(\t r)});
\draw [->,shift={(6.0132585663861855,1.4574110295747007)},line width=1.2pt]  plot[domain=-0.6329017756440738:0.6602101737627059,variable=\t]({1.*2.463978172217413*cos(\t r)+0.*2.463978172217413*sin(\t r)},{0.*2.463978172217413*cos(\t r)+1.*2.463978172217413*sin(\t r)});
\draw [->,shift={(10.014935069490875,1.486678773657992)},line width=1.2pt]  plot[domain=2.497418537268388:3.777259109258474,variable=\t]({1.*2.519941362087911*cos(\t r)+0.*2.519941362087911*sin(\t r)},{0.*2.519941362087911*cos(\t r)+1.*2.519941362087911*sin(\t r)});
\begin{scriptsize}
\draw [fill=blue] (8.,3.) circle (2.5pt);
\draw[color=blue] (7.975102560285598,3.3921983395888806) node {$\bf 3$};
\draw (8.5,4.9) node {$-1$};
\draw [->, shift={(8,3.8)},line width=1.2pt]  plot[domain=-1.6:4,variable=\t]({.8*cos(\t r)},{.8*sin(\t r)});

\draw [fill=blue] (8.,0.) circle (2.5pt);
\draw[color=blue] (7.996312802617946,-0.31959406857202155) node {$\bf 6$};
\draw (7.5,-1.9) node {$1$};
\draw [->, shift={(8,-.8)},line width=1.2pt]  plot[domain=1.6:7,variable=\t]({.8*cos(\t r)},{.8*sin(\t r)});

\draw [fill=blue] (5.,3.) circle (2.5pt);
\draw[color=blue] (4.9420379067598335,3.4134085819212285) node {$\bf 2$};
\draw (5.5,4.9) node {$-1$};
\draw [->, shift={(5,3.8)},line width=1.2pt]  plot[domain=-1.6:4,variable=\t]({.8*cos(\t r)},{.8*sin(\t r)});

\draw [fill=blue] (5.,0.) circle (2.5pt);
\draw[color=blue] (4.963248149092181,-0.2771735839073255) node {$\bf 5$};
\draw (4.5,-1.9) node {$1$};
\draw [->, shift={(5,-.8)},line width=1.2pt]  plot[domain=1.6:7,variable=\t]({.8*cos(\t r)},{.8*sin(\t r)});

\draw [fill=blue] (2.,3.) circle (2.5pt);
\draw[color=blue] (1.9726039802311117,3.4134085819212285) node {$\bf 1$};
\draw (2.5,4.9) node {$2$};
\draw [->, shift={(2,3.8)},line width=1.2pt]  plot[domain=-1.6:4,variable=\t]({.8*cos(\t r)},{.8*sin(\t r)});

\draw [fill=blue] (2.,0.) circle (2.5pt);
\draw[color=blue] (1.9938142225634599,-0.31959406857202155) node {$\bf 4$};
\draw (1.5,-1.9) node {$4$};
\draw [->, shift={(2,-.8)},line width=1.2pt]  plot[domain=1.6:7,variable=\t]({.8*cos(\t r)},{.8*sin(\t r)});

\draw[color=black] (1.1514557409312794,1.7590096799980837) node {$3$};
\draw[color=black] (2.7725419771985965,1.8226404069951276) node {$1$};
\draw[color=black] (5.784396388392013,1.7590096799980837) node {$1$};
\draw[color=black] (4.1845203944570445,1.7377994376657355) node {$3$};
\draw[color=black] (8.753830314920734,1.8014301646627797) node {$1$};
\draw[color=black] (7.196374805650462,1.7377994376657355) node {$3$};

\draw [fill=blue] (12,3.) circle (4.5pt);
\draw[color=blue] (12.0797862649889464,2.5344492655878913) node {${\bf 7}$};
\draw (10.2,4.1) node {$\lambda_1$};
\draw [->, shift={(11,3)},line width=1.2pt]  plot[domain=0:5.47613804206866,variable=\t]({.8*cos(\t r)},{.8*sin(\t r)});
\draw [fill=blue] (15,3.) circle (4.5pt);
\draw[color=blue] (15.0797862649889464,2.5344492655878913) node {${\bf 8}$};
\draw (13.2,4.1) node {$\lambda_2$};
\draw [->, shift={(14,3)},line width=1.2pt]  plot[domain=0:5.47613804206866,variable=\t]({.8*cos(\t r)},{.8*sin(\t r)});
\draw [fill=blue] (18,3.) circle (4.5pt);
\draw[color=blue] (18.0797862649889464,2.5344492655878913) node {${\bf 9}$};
\draw (16.2,4.1) node {$\lambda_3$};
\draw [->, shift={(17,3)},line width=1.2pt]  plot[domain=0:5.47613804206866,variable=\t]({.8*cos(\t r)},{.8*sin(\t r)});
\draw [fill=blue] (12,0.) circle (4.5pt);
\draw[color=blue] (12.0797862649889464,-.5344492655878913) node {${\bf 10}$};
\draw (10.2,1.1) node {$\lambda_4$};
\draw [->, shift={(11,0)},line width=1.2pt]  plot[domain=0:5.47613804206866,variable=\t]({.8*cos(\t r)},{.8*sin(\t r)});
\draw [fill=blue] (15,0.) circle (4.5pt);
\draw[color=blue] (15.0797862649889464,-.5344492655878913) node {${\bf 11}$};
\draw (13.2,1.1) node {$\lambda_5$};
\draw [->, shift={(14,0)},line width=1.2pt]  plot[domain=0:5.47613804206866,variable=\t]({.8*cos(\t r)},{.8*sin(\t r)});
\draw [fill=blue] (18,0.) circle (4.5pt);
\draw[color=blue] (18.0797862649889464,-.5344492655878913) node {${\bf 12}$};
\draw (16.2,1.1) node {$\lambda_6$};
\draw [->, shift={(17,0)},line width=1.2pt]  plot[domain=0:5.47613804206866,variable=\t]({.8*cos(\t r)},{.8*sin(\t r)});

\end{scriptsize}
\end{tikzpicture}
}
\end{center}
\caption{The decomposition of the graph $G$ on 12 vertices using  the method outlined in Proposition \ref{prop:primepower} after Round 2.}\label{ex:22}

 \end{figure}

\noindent\emph{Step c}: Because $i=2$, the decomposition is complete and there is no need to find $\phi_2$.\\[2mm]

\noindent It is worth noting that in this equitable decomposition we have recovered the divisor matrix of the equitable partition associated with the prime-power automorphism $\phi$. This is the matrix $\tilde{M}_2=\left[\begin{matrix}2 & 3 \\1&4\end{matrix}\right]$ seen in Equation \eqref{eq:103}.
\end{example}

\section{General Equitable Decompositions}\label{GED}
In general,  the order of an automorphism $\phi$ of a graph $G$ will not be prime-powered but will have order $\phi=p^N \ell$ where {$p$ is a prime which is relatively prime to $\ell$}. In this case neither Theorem \ref{thm:fullprimedecomp} nor any previous result guarantees that it is possible to create an equitable decomposition of $G$ with respect to $\phi$. In this section we show that this can, in fact, be done.

\begin{remark}\label{rem:careful_transversals}
When performing an equitable decomposition of a graph using an automorphism $\phi$ whose order is not prime-powered (say, $|\phi| = p^N \ell$), our strategy will be to create an automorphism $\psi = \phi^\ell$ of order  $p^N$ and follow the procedure set out in Theorem \ref{thm:fullprimedecomp}.  In order to guarantee that the resulting decomposed matrix still has an automorphism of order $\ell$, we must restrict our method of choosing transversals.  This is done using the following rules:
\begin{itemize}
\item If $a \in V(G)$ was chosen to be in $\cT_0$ in a previous round, $a$ must appear in $\cT_0$ in the next round as well.
\item If $a \in V(G)$ is chosen to be in $\cT_0$ in a certain round, and $|\so_\phi(a)| = p^km$, then $\phi^{p^k}(a), \phi^{2p^k}(a), \ldots, \phi^{(m-1)p^k}(a)$ must also be in $\cT_0$.
\end{itemize}
\end{remark}


\begin{prop}\label{prop:dd}
Let $\phi$ be an automorphism of a graph $G$ with automorphism compatible \index{automorphism compatible}matrix $M$.  Suppose that $\phi$ has order $p^N\ell$, with $p$ a prime which does not divide $\ell$. Then $\psi = \phi^{\ell}$ is an automorphism of $G$ of order $p^N$. Moreover, it is possible to construct an automorphism $\tilde{\phi}$ associated with the equitable decomposition of $M$ over $\psi$ of order $\ell$ such that the divisor matrix \index{divisor matrix} $M_\phi=
(M_\psi)_{\tilde{\phi}}$.
\end{prop}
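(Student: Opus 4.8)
The plan is to build $\tp$ explicitly on the index set of the divisor matrix $M_\psi$ and then verify both that it has order $\ell$ and that it induces the right divisor matrix. Recall from Theorem \ref{thm:fullprimedecomp} that the rows and columns of $M_\psi$ are indexed by a transversal of the orbits of $\psi = \phi^\ell$; more precisely, after carrying out the $N$ rounds of the prime-power algorithm, the surviving indices are in bijection with the orbits $\so_\psi(v)$ that appear, one representative per orbit. The key structural point is that $\phi$ permutes the orbits of $\psi$ among themselves: since $\psi = \phi^\ell$ is a power of $\phi$, for any vertex $v$ we have $\phi(\so_\psi(v)) = \so_\psi(\phi(v))$. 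So $\phi$ descends to a well-defined permutation on the set of $\psi$-orbits, and conjugating by this permutation has order dividing $\ell$ (because $\phi^\ell = \psi$ fixes every $\psi$-orbit setwise). This descended permutation, transported through the bijection between $\psi$-orbits and indices of $M_\psi$, is our candidate $\tp$. The reason Remark \ref{rem:careful_transversals} is invoked is exactly to guarantee that the transversal chosen in the prime-power algorithm is $\phi$-stable enough that this descent is literally a permutation of the retained indices rather than merely an abstract map on orbit classes; I would state this as the first lemma-step of the proof.

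Next I would verify that $\tp \in \Aut(M_\psi)$, i.e. that $(M_\psi)_{\tp(i),\tp(j)} = (M_\psi)_{i,j}$. By Theorem \ref{thm:fullprimedecomp} the entry $(M_\psi)_{i,j}$ equals $\sum_{r \in \so_\psi(j)} M_{i,r}$ where $i$ is the chosen representative of $\so_\psi(i)$. Applying $\tp$ replaces $i$ by (the representative of) $\phi(\so_\psi(i))$ and $j$ by (the representative of) $\phi(\so_\psi(j))$; using that $M$ is automorphism compatible with $\phi$, and that $\phi$ maps $\so_\psi(j)$ bijectively onto $\so_\psi(\phi(j))$, the sum $\sum_{r \in \so_\psi(\phi(j))} M_{\phi(i),r}$ reindexes via $r = \phi(r')$ to $\sum_{r' \in \so_\psi(j)} M_{\phi(i),\phi(r')} = \sum_{r' \in \so_\psi(j)} M_{i,r'} = (M_\psi)_{i,j}$. (A small technical point is that the formula uses a fixed representative per orbit, so one must check the sum is independent of which representative is used — but that is immediate since $\psi$-compatibility of $M$ makes $\sum_{r\in\so_\psi(j)}M_{i,r}$ depend only on the orbit.) Then $|\tp| \mid \ell$ follows from $\phi^\ell = \psi$ acting trivially on $\psi$-orbits; that $|\tp|$ equals $\ell$ rather than a proper divisor should be argued from $|\phi| = p^N\ell$ with $\gcd(p,\ell)=1$ — if $\phi^d$ fixed every $\psi$-orbit for some $d \mid \ell$, then $\phi^d$ would be a power of $\psi = \phi^\ell$, forcing $\ell \mid d$ after a gcd argument, hence $d = \ell$.

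Finally, the equality $M_\phi = (M_\psi)_{\tp}$ is a computation chaining two applications of the divisor-matrix formula. On one side, $M_\phi(a,b) = \sum_{r \in \so_\phi(b)} M_{a,r}$. On the other, $(M_\psi)_{\tp}$ is the divisor matrix of $M_\psi$ under the equitable partition given by the orbits of $\tp$; each $\tp$-orbit of indices corresponds to a union of $\psi$-orbits that is exactly a single $\phi$-orbit (since the $\tp$-orbit of $\so_\psi(v)$ is $\{\so_\psi(\phi^t(v))\}_t$, whose union is $\so_\phi(v)$), and summing the $M_\psi$-entries over such a $\tp$-orbit telescopes the inner $\psi$-orbit sums into a single sum over $\so_\phi(b)$. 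So $(M_\psi)_{\tp}(a,b) = \sum_{\so_\psi(c) \subseteq \so_\phi(b)} \left(\sum_{r \in \so_\psi(c)} M_{a,r}\right) = \sum_{r \in \so_\phi(b)} M_{a,r} = M_\phi(a,b)$, with the fixed-vertex block handled identically (there $\so_\psi = \so_\phi$ and $\tp$ acts as $\phi$). I expect the main obstacle to be the bookkeeping in the first paragraph: making the descent of $\phi$ to $\psi$-orbits into an honest permutation of the \emph{retained transversal indices} of $M_\psi$, which is precisely what the two bullet rules of Remark \ref{rem:careful_transversals} are engineered to ensure, so the argument must carefully invoke those rules rather than treat the transversal as arbitrary.
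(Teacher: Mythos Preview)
Your argument is correct and proves the proposition as stated, but it takes a different route from the paper's. You work directly at the level of the final divisor matrix $M_\psi$: you let $\tp$ be the descent of $\phi$ to the set of $\psi$-orbits (transported to the index set via the chosen representatives), verify $\tp\in\Aut(M_\psi)$ using only the divisor formula $(M_\psi)_{i,j}=\sum_{r\in\so_\psi(j)}M_{i,r}$ and $\phi$-compatibility of $M$, and then chain the two divisor formulas to get $(M_\psi)_{\tp}=M_\phi$. The paper instead writes down an explicit power of $\phi$, namely $\tp=\phi^{1-\ell\alpha}=\phi^{p^N\beta}$ with $1=\ell\alpha+p^N\beta$ coming from B\'ezout, and then inducts through the $N$ rounds of the prime-power algorithm, checking at each stage that this vertex-level map is an automorphism of the intermediate matrix $\tM_k$ \emph{and} of every block $_kB_j$ in $\hM_k$. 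What the paper's extra work buys is exactly what Theorem~\ref{thm:gendecomp} and the algorithm in Section~\ref{GED} need: an automorphism of the entire decomposed matrix, so that the next prime-power decomposition can be applied to every summand, not just to $M_\psi$. Your approach is cleaner for the bare statement, but would need to be supplemented to recover that stronger conclusion. Two small remarks: first, your descent map is well-defined and your $\Aut(M_\psi)$ verification goes through for \emph{any} choice of transversal (the divisor entries depend only on orbits), so Remark~\ref{rem:careful_transversals} is not actually required for your argument---it is needed in the paper's approach precisely because their $\tp$ is a vertex-level map that must literally preserve the chosen transversal sets; second, in your order computation the intermediate assertion ``$\phi^d$ would be a power of $\psi$'' is not quite the right claim (the power of $\psi$ could a priori vary across orbits), though the gcd argument you allude to does give $\ell\mid d$ directly once one notes that $\phi^d$ fixing $\so_\psi(v)$ for $v$ in a $\phi$-orbit of size $p^k m$ forces $m\mid d$, and the lcm of the $m$'s is $\ell$.
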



\begin{proof}
Let $M$ be an automorphism compatible matrix of the graph $G$ and $\phi\in Aut(G)$ of order $|\phi|=p^N \ell$ where $p$ is a prime that does not divide $\ell$. Then $\psi=\phi^{\ell}$ has order $p^N$, which allows us to use Theorem \ref{thm:fullprimedecomp} to decompose $M$ with respect to this automorphism.

To carry out this decomposition we follow the procedure outlined in the previous propositions and theorems, choosing our transversals according to the guidelines in Remark \ref{rem:careful_transversals}.

Note that since $p^k$ and $\ell$ are relatively prime, there exist integers $\alpha$ and $\beta$ such that
 $1=\ell \alpha+p^N \beta$.
%
We define our automorphism $\tilde{\phi}: V(G) \to V(G)$ by
\[
\tilde{\phi}(a) =
 \phi^{(1 - \ell \alpha)}(a)  = \phi^{p^k(p^{N-k})\beta}(a).
\]
The second equality above demonstrates that $\tp$ is closed on each $\cT_m$.

We now show that $\tp$ is an automorphism of $\tilde{M}_k$ for each $k = 0, \ldots, N$.
By hypothesis, $M = \tilde{M}_0$ is compatible with $\phi^{1-\ell \alpha}$. We then assume that $\tM_{k-1}$ is as well. Let $\cT_F$, $\cT_0, \ldots, \cT_{p^k-1}$ be defined in Round $k$ of the decomposition. Note that since $\tp$ is closed on each transversal,  $a$ and $\phi^{1-\ell \alpha}(a)$ must both be in $\cT_F$ or neither, and  if $a \in \cT_m$, then
 $
\phi^{1-\ell \alpha}(a) $
 must also be an element of $\cT_m$. Thus, 
 $\phi^{1 - \ell \alpha}$ is a closed map on the indices of $\tM_k$.

We wish to show that for any  $a, b \in \cT_F \cup \tilde \cT_0$,
 \begin{equation}\label{eq:compatible}
 \tM_k(\phi^{1-\ell \alpha}(a),\phi^{1-\ell \alpha}(b)) = \tM_k(a,b).\end{equation}
Lemma \ref{lem:Dallas} and  Proposition \ref{prop:primepower} give
\begin{center}
$
 \tM_k(\phi^{1-\ell \alpha}(a), \phi^{1-\ell \alpha}(b)) = \begin{cases}
 \tM_{k-1}(\phi^{1-\ell \alpha}(a), \phi^{1-\ell \alpha}(b)) & \text{ if } b \in \cT_F \\
 p\tM_{k-1}(\phi^{1-\ell \alpha}(a), \phi^{1-\ell \alpha}(b)) & \text{ if } b \notin \cT_F, \ a \in \cT_F \\
 \sum_{m=0}^{p-1}\tM_{k-1}(\phi^{1-\ell \alpha}(a), \phi^{m\ell p^{k-1}}\phi^{1-\ell \alpha}(b)) & \text{ if } b \notin \cT_F, \ a \notin \cT_F  \\
 \end{cases}
 $\\[2mm]

$
\hspace*{4cm} =\begin{cases}
 \tM_{k-1}(a,b)= \tM_k(a,b) & \text{ if } b \in \cT_F \\
 p\tM_{k-1}(a, b) = \tM_k(a,b) & \text{ if } b \notin \cT_F, \ a \in \cT_F \\
 \sum_{m=0}^{p-1}\tM_{k-1}(a, \phi^{m\ell p^{k-1}}(b)) = \tM_k(a,b)& \text{ if } b \notin \cT_F, \ a \notin \cT_F  \\
 \end{cases}.
$
\end{center}
All three cases give the desired result in Equation \eqref{eq:compatible} because, by assumption, $\tM_{k-1}$ is compatible with $\tp$. 


We  now consider the block matrices $_kB_j$ in $\hM_k$ (cf. Theorem \ref{thm:fullprimedecomp}), 
setting $i = N-k+1$ in the {$k$th decomposition} and $\omega$ to be a primitive $p^i$th root of unity. Note that
\[\begin{array}{c}
\displaystyle{
_kB_j(\tilde{\phi}(a),\tilde{\phi}(b))
= \sum_{m=0}^{p^i-1}\omega^{m\gamma_j}\tM_{k-1}(\tilde{\phi}(a), \phi^{\ell m}(\tilde{\phi}(b))
=\sum_{m=0}^{p^i-1}\omega^{m\gamma_j}\tM_{k-1}(\tilde{\phi}(a), \tp \circ \phi^{\ell m}(b)}\\[2mm]
\displaystyle{
=\sum_{m=0}^{p^i-1}\omega^{m\gamma_j}\tM_{k-1}(a,  \phi^{\ell m}(b)
={_kB_j(a,b)}
}

\end{array}
\]
since $\tM_{k-1}$ is compatible with $\tp$. Thus, $\tp$ is an automorphism on the decomposed matrix $M_k = \tM_k + \hat{M}_k$ for each $k = 0, 1, \ldots, N$.  It is straightforward to verify that $\tp$ has order $\ell$ on $M_N$, since the row and column indices in this matrix contain representatives from each orbit of $\phi$.

Finally, we need to show that by first decomposing $M$ using $\psi$ then using $\tp$ the result is the divisor matrix $M_{\phi}$. To see this we assume that $|\so_\phi (b)| = p^k m$ and note that
\[
\left(M_{\phi^\ell}\right)_{\tp} (a,b)= \sum_{r \in \so_{\tp}(b)}   M_{\phi^\ell}(a,r) =  \sum_{r \in \so_{\tp}(b)}  \sum_{s \in \so_{\phi^\ell}(r)} M(a,s)
\]
Since $m$ is relatively prime to both $\beta$ and $p$, it is clear from the construction of $\alpha$ and $\beta$ that
\[
 \so_{\tp}(b) = \{\phi^{(1-\ell \alpha) t}(b) = \phi^{\beta p^{N-k}p^k  t}(b)\ | \ t  = 0, \ldots, m-1 \}= \so_{\phi^{p^k}}(b).
 \] 
 Thus, 
\[
\bigcup_{r \in \so_{\tp}(b)} \so_{\phi^\ell}(r) = \{ \phi^{\ell x+p^k t}(b) \ | \ t  = 0, \ldots, m-1, \ x = 0, \ldots, p^k \} = \so_{\phi}(b),
\]
and
\[
\left(M_{\phi^\ell}\right)_{\tp} (a,b) = \sum_{r \in \so_{\tp}(b)}  \sum_{s \in \so_{\phi^\ell}(r)} M(a,s)
=\sum_{r \in \so_{\phi}(b)}   M(a,r)  = M_{\phi}(a,b).
\]

\end{proof}

By repeated application of Theorem \ref{thm:fullprimedecomp} and Proposition \ref{prop:dd} we can at this point state the general theorem of equitable decompositions.



\begin{thm}\label{thm:gendecomp}\textbf{(Equitable Decompositions over Arbitrary Automorphisms)}
Let $G$ be a graph, $\phi$ be any automorphism of $G$, and $M$ be an automorphism compatible matrix of $G$. Then there exists an invertible matrix $T$ that can be explicitly constructed such that
\begin{equation}
T^{-1}MT=M_{\phi}\oplus B_1\oplus B_2\oplus\cdots B_{{K}}
\end{equation}
where $M_\phi$ is the divisor matrix associated with $\phi$.
Thus
\[\sigma(M) = \sigma\left(M_\phi \right)
\cup \sigma(B_1) \cup \sigma(B_2) \cup \cdots \cup \sigma(B_{{K}}).
\]
\end{thm}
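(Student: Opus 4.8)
The plan is to argue by induction on the number $h$ of distinct primes dividing $|\phi|$, carried out at the level of pairs $(M,\phi)$ with $\phi\in\Aut(M)$ rather than only for adjacency matrices of simple graphs. This reformulation is essential: the divisor matrices produced along the way are no longer adjacency matrices of simple graphs, but they remain automorphism compatible with the smaller automorphisms supplied by Propositions \ref{prop:primepowerautom} and \ref{prop:dd}, so the inductive hypothesis can be reapplied to them. Write $|\phi|=p_1^{N_1}p_2^{N_2}\cdots p_h^{N_h}$ with the $p_i$ distinct primes.

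For the base case $h=1$ the order $|\phi|=p_1^{N_1}$ is prime-powered, and I would simply invoke Theorem \ref{thm:fullprimedecomp}: it supplies an explicitly constructed invertible $T$ with $T^{-1}MT=M_\phi\oplus\hM_1\oplus\cdots\oplus\hM_{N_1}$, and flattening the blocks ${}_iB_j$ that make up the $\hM_i$ into a single list $B_1,\dots,B_K$ (so $K=\sum_i(p^{N_1-i+1}-p^{N_1-i})$) gives the statement. For the inductive step, assume the theorem for automorphisms whose order has at most $h-1$ distinct prime factors, and write $|\phi|=p^N\ell$ with $p=p_1$, $N=N_1$, and $\ell=p_2^{N_2}\cdots p_h^{N_h}$ coprime to $p$. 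First I would apply Theorem \ref{thm:fullprimedecomp} to the prime-powered automorphism $\psi=\phi^{\ell}$, which has order $p^N$, choosing the transversals at every round in accordance with Remark \ref{rem:careful_transversals}; this produces an invertible $T_1$ with
\[
T_1^{-1}MT_1=M_\psi\oplus\hM_1\oplus\cdots\oplus\hM_N .
\]
Proposition \ref{prop:dd} then yields an automorphism $\tp\in\Aut(M_\psi)$ of order $\ell$ for which $(M_\psi)_{\tp}=M_\phi$. Since $\ell$ has only $h-1$ distinct prime factors, the inductive hypothesis applies to $(M_\psi,\tp)$ and gives an invertible $T_2$ with $T_2^{-1}M_\psi T_2=M_\phi\oplus B'_1\oplus\cdots\oplus B'_{K'}$. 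Setting $T=T_1(T_2\oplus I)$, where $I$ is the identity on the complementary summand $\hM_1\oplus\cdots\oplus\hM_N$, gives an invertible matrix with
\[
T^{-1}MT=M_\phi\oplus B'_1\oplus\cdots\oplus B'_{K'}\oplus\hM_1\oplus\cdots\oplus\hM_N ,
\]
and relabeling the $B'_i$ together with the ${}_iB_j$ blocks of the $\hM_i$ as $B_1,\dots,B_K$ finishes the decomposition. The spectral identity is then immediate, since similar matrices share a spectrum and the spectrum of a block-diagonal matrix is the multiset union of the blocks' spectra.

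The substantive work has already been isolated in the earlier results, so I do not expect a genuine obstacle here; the delicate points are purely organizational. The one thing that needs care is the claim that iterating the procedure lands on $M_\phi$ itself rather than on some intermediate divisor matrix: this is exactly the identity $(M_\psi)_{\tp}=M_\phi$ from Proposition \ref{prop:dd}, used together with the inductive hypothesis that decomposing $M_\psi$ over $\tp$ places $(M_\psi)_{\tp}$ in the upper-left block. One should also confirm that the transversal restrictions of Remark \ref{rem:careful_transversals} imposed while decomposing over $\psi=\phi^{\ell}$ are consistent with those needed in the subsequent rounds, but this is automatic because $\tp$ is constructed in Proposition \ref{prop:dd} precisely under those restrictions; and that $T$ is invertible, which holds because it is a product of invertible matrices, with $T_2\oplus I$ invertible since $T_2$ is.
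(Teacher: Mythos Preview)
Your proposal is correct and follows essentially the same route as the paper. The paper does not give a separate formal proof of this theorem; it simply states that the result follows ``by repeated application of Theorem \ref{thm:fullprimedecomp} and Proposition \ref{prop:dd}'' and then records the corresponding algorithm, which is exactly the induction on the number of distinct prime factors that you spell out.

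One small difference worth noting: in the inductive step you apply the hypothesis only to the divisor block $M_\psi$, setting $T=T_1(T_2\oplus I)$ and leaving the $\hM_i$ summands untouched. The paper's algorithm instead applies $\tp$ to the \emph{entire} decomposed matrix $M_\psi\oplus\hM_1\oplus\cdots\oplus\hM_N$ at the next stage; this is legitimate because Proposition \ref{prop:dd} in fact shows $\tp$ is an automorphism of each ${}_kB_j$ as well as of $M_\psi$. Both versions prove the theorem as stated, since it only asserts the existence of \emph{some} block decomposition with $M_\phi$ in the leading position; the paper's version simply yields finer $B_j$ blocks. Your observation that the induction must be phrased for pairs $(M,\phi)$ with $\phi\in\Aut(M)$, rather than for graph automorphisms alone, is a point the paper leaves implicit but which is indeed needed to make the recursion well-posed.
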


We now give an algorithm for equitably decomposing a graph with respect to any of its automorphisms.

\begin{center}
\emph{Performing Equitable Decompositions of General Type}
\end{center}

Let $G$ be a graph with automorphism compatible matrix $M$ and $\phi$ of order $\ell$ with prime factorization ${\ell=p_0^{N_0}p_1^{N_1} \cdots p_{h-1}^{N_{h-1}}}$. Initially set $M_0  = M$, $\ell_0 = \ell$, and $\phi_0 = \phi$.  We perform $h$ sequential decompositions of $M$, one for each prime in the factorization of $\ell$. To begin we start with $i = 0$, and move to \emph{Step A.}\m

\vspace{0.1in}

\begingroup\raggedright\leftskip=20pt\rightskip=20pt


\noindent\emph{\textbf{Step A:}} \emph{Let $\ell_{i+1} = \ell_i/p_i^{N_i}$.  Form the prime-power automorphism $\psi_{i} = \phi_i^{\ell_{i+1}}$}, which has order $p_i^{N_i}$.\\
\noindent\emph{\textbf{Step B:}} \emph{Perform the $N_i$ equitable decompositions of $M_i$ using the algorithm described in Section \ref{sec:pp}. Throughout this process we choose a semi-transversal as prescribed in Remark  \ref{rem:careful_transversals}.  Finally we define $M_{i+1}$ to be the resulting matrix of the above algorithm.}


\noindent\emph{\textbf{Step C:}} \emph{Define $\phi_{i+1}=\tilde{\phi}_i=\phi_i^{1-\ell_i \alpha}$, where $\alpha$ is the integer chosen so that $1=\ell_{i+1} \alpha+p_i^{N_i}\beta$ as described in the proof of Proposition \ref{prop:dd}.
If $i < h$, then set $i = i+1$ and return to \emph{Step A}.  Otherwise, the decomposition is complete.}


\par\endgroup

\vspace{0.1in}

\noindent The procedure described in Steps $A$--$C$ allows one to sequentially decompose a matrix $M$ over any of its automorphisms. By extension we refer to the resulting matrix as an \emph{equitable decomposition} of $M$ over $\phi$. The following example illustrates an equitable decomposition over an automorphism that is neither basic, separable, nor prime-powered, i.e. an equitable partition that cannot be done by any previously given algorithm.

\begin{example}\label{ex:18}
Consider the graph $G$ shown in Figure \ref{fig:ex31}. Here we consider its adjacency matrix $M$.

\begin{figure}[h!]
\begin{center}
\includegraphics[scale=.45]{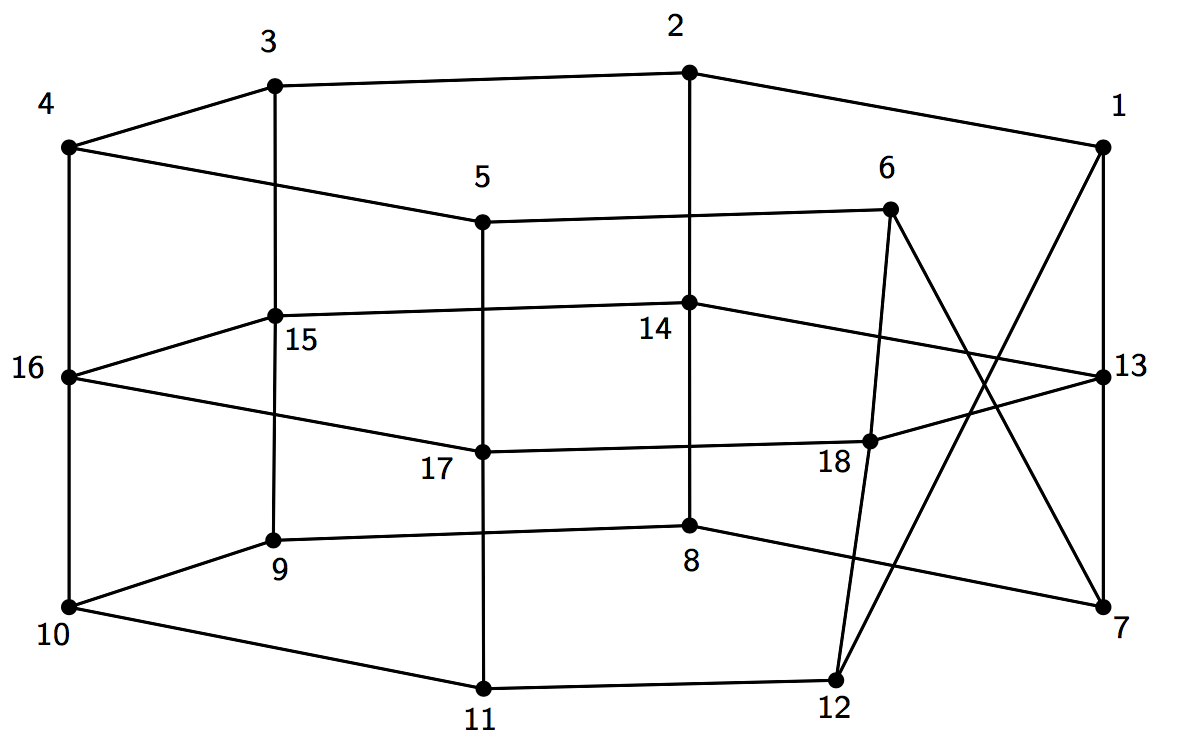}
\end{center}
\caption{The graph $G$ with automorphism of order 12. }\label{fig:ex31}
\end{figure}

This graph has the automorphism
$$\phi=(1,2,3,4,5,6,7,8,9,10,11,12)(13,14,15,16,17,18),$$
which has order $12= 2^2 \cdot 3$. Thus in the prime decomposition of $|\phi|$, $p_0=2,N_0=2$ and $p_1=3,N_1=1$.  Because there are two distinct prime factors in this prime decomposition we will go through the Steps A-C in our algorithm twice in two rounds in order to fully decompose the graph $G$. We start with $i=0$, $M_0=M$, $\ell_0=12$, and $\phi_0=\phi$.

\begin{center}
\emph{Round I}
\end{center}

\noindent \emph{Step A}:  We start with $\ell_1=\ell_0/p_0^{N_0}=12/2^2=3$ so that
\[
\psi_0=\phi_0^3=(1,4,7,10)(2,5,8,11)(3,6,9,12)(13,16)(14,17)(15,18).
\]
\emph{Step B}: Now we run though the algorithm given in Section \ref{sec:pp} for decomposing a graph over a prime-power automorphism. In this case we will require two rounds for the automorphism $\psi_0$.

In Round 1 of this sub-decomposition, we have three orbits of maximal length.  When choosing our semi-transversal $\cT_0$, we are free to choose any element from the first orbit, but based on that choice the other two elements of $\cT_0$ are determined by the rules in Remark \ref{rem:careful_transversals}. We choose vertex 1 to be in $\cT_0$.  Therefore $\cT_0=\{1,\phi_0^4(1),\phi_0^{2\cdot 4}(1)\}=\{1,5,9\}$. Thus,
\[
\cT_0=\{1,5,9\}, \ \cT_1=\{4,8,12\},\ \cT_2=\{7,11,3\}, \text{ and }\cT_3=\{10,2,6\}.
\]
 The remaining vertices are put into $\cT_F=\{13,14,15,16,17,18\}$ since they are contained in orbits of $\psi_{0}$ whose order is not maximal.  The relevant block matrices for this stage of the decomposition are
 \[
 \begin{array}{c}
 F = \left[\begin{array}{rrrrrr}
 0 & 1 & 0 & 0 & 0 & 1\\
 1 & 0 & 1 & 0 & 0 & 0 \\
 0 & 1 & 0 & 1 & 0  & 0 \\
 0 & 0 & 1 & 0 & 1 & 0 \\
 0 & 0 & 0 & 1 & 0 & 1 \\
 1 & 0 & 0 & 0 & 1 & 0 \\
 \end{array}\right], \
 H = L^T =  \left[\begin{array}{rrrrrr}
 1 & 0 & 0 & 0 & 0 & 0\\
 0 & 0 & 0 & 0 & 1 & 0 \\
 0 & 0 & 1 & 0 & 0 & 0 \\
 0 & 0 & 0 & 1 & 0 & 0\\
 0 & 1 & 0 & 0 & 0 & 0 \\
 0 & 0 & 0 & 0 & 0  & 1 \\
 \end{array}\right],\\[4mm]
 C_0 =C_2 =  \left[\begin{array}{rrr}
 0 & 0 & 0 \\
 0 & 0 & 0 \\
 0 & 0 & 0 \\
 \end{array}\right], \
 C_1 = \left[\begin{array}{rrr}
 0 & 0 & 1 \\
 1 & 0 & 0 \\
 0 & 1 & 0 \\
 \end{array}\right], \
 C_3 = \left[\begin{array}{rrr}
 0 & 1 & 0 \\
 0 & 0 & 1 \\
 1 & 0 & 0 \\
 \end{array}\right],\\[6mm]
  D_0 =  \left[\begin{array}{c|c}
 C_0 & C_1\\\hline
 C_3 & C_0
 \end{array}\right], \
 D_1 =  \left[\begin{array}{c|c}
C_2 & C_3 \\\hline
C_1 & C_2
 \end{array}\right]
 \end{array}
 \].
The first round of this algorithm results in a decomposed matrix $M_1 = \tM_1 \oplus \hM_1$ where
\[
\tM_1 = \left[
 \begin{array}{c|c}
 F  & 2H \\\hline
 L &  \begin{array}{c|c}
0 & C_1 + C_3\\\hline
 C_1 + C_3 & 0 \\
 \end{array}
 \end{array}\right],
 \text{ and }
 \hM_1 =  \left[\begin{array}{rrr}
  & -i & i\\
 i & 0 & -i \\
 -i & i & 0 \\
 \end{array}\right]\oplus \left[\begin{array}{rrr}
  & i & -i\\
 -i & 0 & i \\
 i & -i & 0 \\
 \end{array}\right].
\]
\begin{figure}[h!]
\begin{center}
\resizebox{.8\textwidth}{!}{
\begin{tikzpicture}[line cap=round,line join=round,>=triangle 45,x=1.0cm,y=1.0cm,scale = 1]
\draw [ line width=.8pt] (1.,4.)-- (0.,5.732050807568879);
\draw [line width=.8pt] (0.,5.732050807568879)-- (4,5.73);
\draw [line width=.8pt] (1.,7.464101615137759)-- (3.,7.464101615137758); 
\draw [line width=.8pt] (3.,7.464101615137758)-- (1, 4);
\draw [line width=.8pt] (4.,5.732050807568878)-- (3.,4.);
\draw [line width=.8pt] (3.,4.)-- (1.,7.464);
\draw [<-,shift={(-0.506242399292126,7.755413437039516)},line width=.8pt]  plot[domain=-0.19104433320074232:1.2289418560986745,variable=\t]({1.*1.5341540805945852*cos(\t r)+0.*1.5341540805945852*sin(\t r)},{0.*1.5341540805945852*cos(\t r)+1.*1.5341540805945852*sin(\t r)});
\draw [<-,shift={(1.90855230376666,9.134664015740306)},line width=.8pt]  plot[domain=3.106810939765217:4.214271890312301,variable=\t]({1.*1.901642979895714*cos(\t r)+0.*1.901642979895714*sin(\t r)},{0.*1.901642979895714*cos(\t r)+1.*1.901642979895714*sin(\t r)});
\draw [<-,shift={(-0.988899471914504,4.041427734805917)},line width=.8pt]  plot[domain=1.0415299486135894:2.113232867491565,variable=\t]({1.*1.9586037219691117*cos(\t r)+0.*1.9586037219691117*sin(\t r)},{0.*1.9586037219691117*cos(\t r)+1.*1.9586037219691117*sin(\t r)});
\draw [<-,shift={(-1.0097368927336448,7.205719686540119)},line width=.8pt]  plot[domain=4.124844986010433:5.313103137274307,variable=\t]({1.*1.786412202543338*cos(\t r)+0.*1.786412202543338*sin(\t r)},{0.*1.786412202543338*cos(\t r)+1.*1.786412202543338*sin(\t r)});
\draw [<-,shift={(2.581407759435215,2.0453070123650905)},line width=.8pt]  plot[domain=2.2510199347745874:3.0694809985285096,variable=\t]({1.*2.5142941708382085*cos(\t r)+0.*2.5142941708382085*sin(\t r)},{0.*2.5142941708382085*cos(\t r)+1.*2.5142941708382085*sin(\t r)});
\draw [<-,shift={(-0.8372203359588033,3.830918020890392)},line width=.8pt]  plot[domain=-1.0544572683429871:0.09177289446649778,variable=\t]({1.*1.8449843572562339*cos(\t r)+0.*1.8449843572562339*sin(\t r)},{0.*1.8449843572562339*cos(\t r)+1.*1.8449843572562339*sin(\t r)});
\draw [<-,shift={(5.287994450762985,4.218264940237373)},line width=.8pt]  plot[domain=3.236700592479762:4.137088091502714,variable=\t]({1.*2.298381646041195*cos(\t r)+0.*2.298381646041195*sin(\t r)},{0.*2.298381646041195*cos(\t r)+1.*2.298381646041195*sin(\t r)});
\draw [<-,shift={(2.0079006915267903,2.22836543808719)},line width=.8pt]  plot[domain=0.030288476888756067:1.0603148999141336,variable=\t]({1.*2.030504877816603*cos(\t r)+0.*2.030504877816603*sin(\t r)},{0.*2.030504877816603*cos(\t r)+1.*2.030504877816603*sin(\t r)});
\draw [<-,shift={(5.,7.433917393402623)},line width=.8pt]  plot[domain=4.181144364102449:5.24363359666693,variable=\t]({1.*1.9739173934026226*cos(\t r)+0.*1.9739173934026226*sin(\t r)},{0.*1.9739173934026226*cos(\t r)+1.*1.9739173934026226*sin(\t r)});
\draw [<-,shift={(5.,4.)},line width=.8pt]  plot[domain=1.0471975511965994:2.0943951023931957,variable=\t]({1.*2.*cos(\t r)+0.*2.*sin(\t r)},{0.*2.*cos(\t r)+1.*2.*sin(\t r)});
\draw [<-,shift={(1.8500027892592914,9.28873668048161)},line width=.8pt]  plot[domain=5.274762893612817:6.238985096185884,variable=\t]({1.*2.156800108121701*cos(\t r)+0.*2.156800108121701*sin(\t r)},{0.*2.156800108121701*cos(\t r)+1.*2.156800108121701*sin(\t r)});
\draw [<-,shift={(5.562635015790512,7.131798835625496)},line width=.8pt]  plot[domain=2.217922766812652:3.012639915806461,variable=\t]({1.*2.5840904321302696*cos(\t r)+0.*2.5840904321302696*sin(\t r)},{0.*2.5840904321302696*cos(\t r)+1.*2.5840904321302696*sin(\t r)});
\draw [line width=.8pt] (-1.9999783181369541,5.7188805974618955)-- (0.008059480959714316,9.200793082314224);
\draw [line width=.8pt] (0.008059480959714316,9.200793082314224)-- (4.00469641476435,9.193436698311338);
\draw [line width=.8pt] (4.00469641476435,9.193436698311338)-- (6.,5.7320508075688785);
\draw [line width=.8pt] (6.,5.7320508075688785)-- (4.037474256249794,2.2898569351788662);
\draw [line width=.8pt] (4.037474256249794,2.2898569351788662)-- (0.07364803520534302,2.2264598291842406);
\draw [line width=.8pt] (0.07364803520534302,2.2264598291842406)-- (-1.9999783181369541,5.7188805974618955);
\begin{scriptsize}
\draw [fill=blue] (1.,4.) circle (2.5pt);
\draw[color=blue] (0.66,4.12) node {$1$};
\draw [fill=blue] (3.,4.) circle (2.5pt);
\draw[color=blue] (2.78,3.79) node {$4$};
\draw [fill=blue] (4.,5.732050807568878) circle (2.5pt);
\draw[color=blue] (4.02,5.25) node {$9$};
\draw [fill=blue] (3.,7.464101615137758) circle (2.5pt);
\draw[color=blue] (3.3,7.35) node {$12$};
\draw [fill=blue] (1.,7.464101615137759) circle (2.5pt);
\draw[color=blue] (1.32,7.75) node {$5$};
\draw [fill=blue] (0.,5.732050807568879) circle (2.5pt);
\draw[color=blue] (-0.1,6.17) node {$8$};
\draw [fill=blue] (4.00469641476435,9.193436698311338) circle (2.5pt);
\draw[color=blue] (4.14,9.57) node {$16$};
\draw [fill=blue] (0.008059480959714316,9.200793082314224) circle (2.5pt);
\draw[color=blue] (-0.26,9.53) node {$17$};
\draw [fill=blue] (-1.9999783181369541,5.7188805974618955) circle (2.5pt);
\draw[color=blue] (-2.32,6.05) node {$18$};
\draw [fill=blue] (0.07364803520534302,2.2264598291842406) circle (2.5pt);
\draw[color=blue] (-0.22,2.09) node {$13$};
\draw [fill=blue] (4.037474256249794,2.2898569351788662) circle (2.5pt);
\draw[color=blue] (4.18,2.07) node {$14$};
\draw [fill=blue] (6.,5.7320508075688785) circle (2.5pt);
\draw[color=blue] (6.28,5.79) node {$15$};
\draw[color=black] (1.05,8.56) node {$2$};
\draw[color=black] (-1.01,6.24) node {$2$};
\draw[color=black] (0.19,3.5) node {$2$};
\draw[color=black] (3.09,3.14) node {$2$};
\draw[color=black] (4.81,5.14) node {$2$};
\draw[color=black] (3.85,8.08) node {$2$};
\end{scriptsize}
\end{tikzpicture}
\begin{minipage}[b]{.35\textwidth}
\begin{tikzpicture}[line cap=round,line join=round,>=triangle 45,x=1.0cm,y=1.0cm,scale = .8]
\draw [shift={(-0.155,-5.927684547533456)},line width=.8pt,->]  plot[domain=1.0471975511965976:2.0243951023931953,variable=\t]({1.*5.69*cos(\t r)+0.*5.69*sin(\t r)},{0.*5.69*cos(\t r)+1.*5.69*sin(\t r)});
\draw [shift={(-0.155,3.927684547533456)},line width=.8pt,-> ]  plot[domain=4.1887902047863905:5.235987755982988,variable=\t]({1.*5.69*cos(\t r)+0.*5.69*sin(\t r)},{0.*5.69*cos(\t r)+1.*5.69*sin(\t r)});
\draw [shift={(5.535,3.9276845475334534)},line width=.8pt,->]  plot[domain=3.1415926535897927:4.12879020478639,variable=\t]({1.*5.69*cos(\t r)+0.*5.69*sin(\t r)},{0.*5.69*cos(\t r)+1.*5.69*sin(\t r)});
\draw [shift={(-3.,-1.)},line width=.8pt,->]  plot[domain=0.:1.0471975511965974,variable=\t]({1.*5.69*cos(\t r)+0.*5.69*sin(\t r)},{0.*5.69*cos(\t r)+1.*5.69*sin(\t r)});
\draw [shift={(-5.845,3.9276845475334587)},line width=.8pt,->]  plot[domain=-1.0471975511965983:-0.06,variable=\t]({1.*5.69*cos(\t r)+0.*5.69*sin(\t r)},{0.*5.69*cos(\t r)+1.*5.69*sin(\t r)});
\draw [shift={(2.69,-1.)},line width=.8pt,->]  plot[domain=2.0943951023931953:3.141592653589793,variable=\t]({1.*5.69*cos(\t r)+0.*5.69*sin(\t r)},{0.*5.69*cos(\t r)+1.*5.69*sin(\t r)});
\begin{scriptsize}
\draw [fill=blue] (-3.,-1.) circle (2.5pt);
\draw[color=blue] (-3.38,-1.15) node {$7$};
\draw [fill=blue] (2.69,-1.) circle (2.5pt);
\draw[color=blue] (2.96,-1.07) node {$11$};
\draw [fill=blue] (-0.155,3.927684547533456) circle (2.5pt);
\draw[color=blue] (-0.22,4.37) node {$3$};
\draw[color=black] (-0.22,-0.37) node {$-i$};
\draw[color=black] (-0.16,-1.93) node {$i$};
\draw[color=black] (0.78,1.39) node {$-i$};
\draw[color=black] (2.1,2.15) node {$i$};
\draw[color=black] (-1.02,1.47) node {$-i$};
\draw[color=black] (-2.47,2.36) node {$i$};
\end{scriptsize}
\end{tikzpicture}

\begin{tikzpicture}[line cap=round,line join=round,>=triangle 45,x=1.0cm,y=1.0cm,scale = .8]
\draw [shift={(-0.155,-5.927684547533456)},line width=.8pt,->]  plot[domain=1.0471975511965976:2.0243951023931953,variable=\t]({1.*5.69*cos(\t r)+0.*5.69*sin(\t r)},{0.*5.69*cos(\t r)+1.*5.69*sin(\t r)});
\draw [shift={(-0.155,3.927684547533456)},line width=.8pt,-> ]  plot[domain=4.1887902047863905:5.235987755982988,variable=\t]({1.*5.69*cos(\t r)+0.*5.69*sin(\t r)},{0.*5.69*cos(\t r)+1.*5.69*sin(\t r)});
\draw [shift={(5.535,3.9276845475334534)},line width=.8pt,->]  plot[domain=3.1415926535897927:4.12879020478639,variable=\t]({1.*5.69*cos(\t r)+0.*5.69*sin(\t r)},{0.*5.69*cos(\t r)+1.*5.69*sin(\t r)});
\draw [shift={(-3.,-1.)},line width=.8pt,->]  plot[domain=0.:1.0471975511965974,variable=\t]({1.*5.69*cos(\t r)+0.*5.69*sin(\t r)},{0.*5.69*cos(\t r)+1.*5.69*sin(\t r)});
\draw [shift={(-5.845,3.9276845475334587)},line width=.8pt,->]  plot[domain=-1.0471975511965983:-0.06,variable=\t]({1.*5.69*cos(\t r)+0.*5.69*sin(\t r)},{0.*5.69*cos(\t r)+1.*5.69*sin(\t r)});
\draw [shift={(2.69,-1.)},line width=.8pt,->]  plot[domain=2.0943951023931953:3.141592653589793,variable=\t]({1.*5.69*cos(\t r)+0.*5.69*sin(\t r)},{0.*5.69*cos(\t r)+1.*5.69*sin(\t r)});
\begin{scriptsize}
\draw [fill=blue] (-3.,-1.) circle (2.5pt);
\draw[color=blue] (-3.38,-1.15) node {$10$};
\draw [fill=blue] (2.69,-1.) circle (2.5pt);
\draw[color=blue] (2.96,-1.07) node {$2$};
\draw [fill=blue] (-0.155,3.927684547533456) circle (2.5pt);
\draw[color=blue] (-0.22,4.37) node {$6$};
\draw[color=black] (-0.22,-0.37) node {$-i$};
\draw[color=black] (-0.16,-1.93) node {$i$};
\draw[color=black] (0.78,1.39) node {$-i$};
\draw[color=black] (2.1,2.15) node {$i$};
\draw[color=black] (-1.02,1.47) node {$-i$};
\draw[color=black] (-2.47,2.36) node {$i$};
\end{scriptsize}
\end{tikzpicture}

\end{minipage}}
\end{center}
\caption{The decomposed graph $G$ after one round of decomposing using automorphism $\psi_0 = (1,4,7,10)$ $(2,5,8,11)(3,6,9,12)(13,16)(14,17)(15,18)$. The weights of unidirectional and bidirectional edges are equal to one unless otherwise stated. }\label{fig:ex32}
\end{figure}
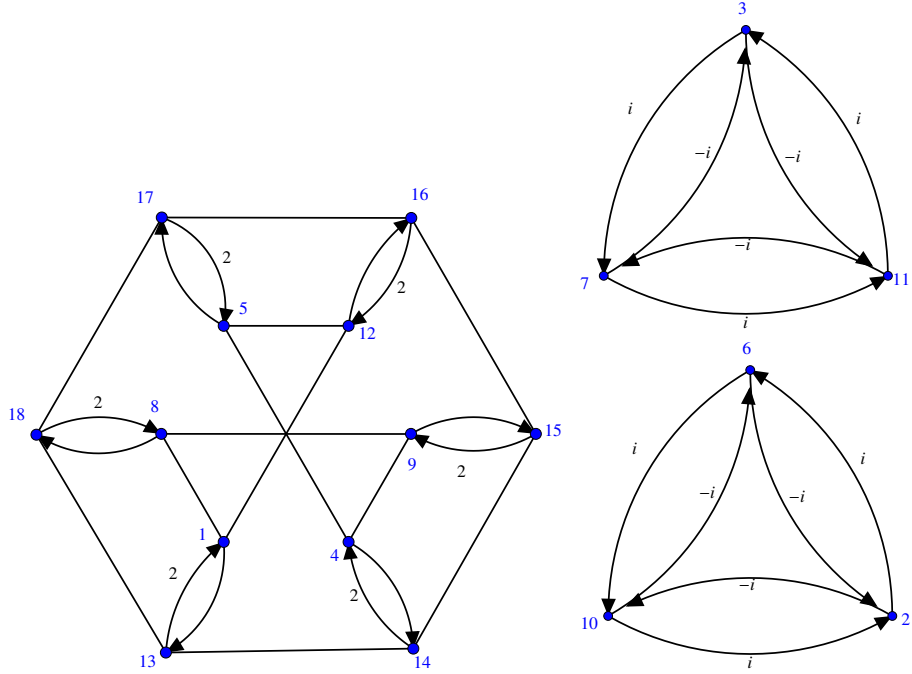
 The associated decomposed graph is shown in Figure \ref{fig:ex32}. In the second iteration of the sub-algorithm, we use Equation \eqref{eq:6} to find the new automorphism
$$\psi_{1}= \psi_0^2=(1,4)(5,8)(9,12)(13,16)(14,17)(15,18),$$
which has six cycles of length two. We now must choose the semi-transversals for the next step. We begin with $\cT_0 = \{1,5,9\}$ from the previous round. Now we are free to choose any element from the orbits of length $2^1$ of $\phi_{0}$, so we choose $13$, and then add $\{13, \phi ^{2}_0(13),\phi ^{2\cdot 2}_0(13)\}=\{13, 15,17\}$ to $\cT_0$.  Thus we have
\[
\cT_0=\{1,5,9,13,15,17\}\text{ and }\cT_1=\{4,8,12,16,18,14\}.
\]
 Using these transversals, the relevant block matrices are
 \[
 F = H = L  = \emptyset,\ C_0 = \left[\begin{array}{rrrrrr}
 0 & 0 & 0 & 1 & 0 & 0 \\
 0 & 0 & 0 & 0 & 0 & 1 \\
 0 & 0 & 0 & 0 & 1 & 0 \\
 2 & 0 & 0 & 0 & 0 & 0 \\
 0 & 0 & 2 & 0 & 0 & 0 \\
 0 & 2 & 0 & 0 & 0 & 0 \\
 \end{array}\right], \
 C_1 = \left[\begin{array}{rrrrrr}
 0 & 1 & 1 & 0 & 0 & 0 \\
 1 & 0 & 1 & 0 & 0 & 0 \\
 1 & 1 & 0 & 0 & 0 & 0 \\
 0 & 0 & 0 & 0 & 1 & 1 \\
 0 & 0 & 0 & 1 & 0 & 1 \\
 0 & 0 & 0 & 1 & 1 & 0 \\
 \end{array}\right].
 \]
 So, the second round results in an adjacency matrix $M_2 = \tM_2 \oplus \hM_2 \oplus \hM_1$ where
 \begin{equation}\label{eq:afterround1}
\tM_2 =  \left[\begin{array}{rrrrrr}
 0 & 1 & 1 & 1 & 0 & 0 \\
 1 & 0 & 1 & 0 & 0 & 1 \\
 1 & 1 & 0 & 0 & 1 & 0 \\
 2&  0 & 0 & 0 & 1 & 1 \\
 0 & 0 & 2 & 1 & 0 & 1 \\
 0 & 2 & 0 & 1 & 1 & 0 \\
 \end{array}\right] , \text{ and }
 \hM_2 =
  \left[\begin{array}{rrrrrr}
 0 & -1 & -1 & 1 & 0 & 0 \\
 -1 & 0 & -1 & 0 & 0 & 1 \\
 -1 & -1 & 0 & 0 & 1 & 0 \\
 2&  0 & 0 & 0 & -1 & -1 \\
 0 & 0 & 2 & -1 & 0 & -1 \\
 0 & 2 & 0 & -1 & -1 & 0 \\
 \end{array}\right]
 \end{equation}
 with the associated decomposed graphs given in Figure \ref{fig:afterround2}.

 \begin{figure}[h!]
 \begin{center}
 \resizebox{.7\textwidth}{!}{
\begin{minipage}{.58\textwidth}
\begin{tikzpicture}[line cap=round,line join=round,>=triangle 45,x=1.0cm,y=1.0cm]
\draw [line width=.8pt] (1.,3.)-- (3.,3.);
\draw [line width=.8pt] (3.,3.)-- (2.,1.2679491924311226);
\draw [line width=.8pt] (2.,1.2679491924311226)-- (1.,3.);
\draw [line width=.8pt] (-0.9543812530264946,4.15071705839042)-- (4.950264550997168,4.157751888343405);
\draw [line width=.8pt] (4.950264550997168,4.157751888343405)-- (2.,-1.);
\draw [line width=.8pt] (2.,-1.)-- (-0.9543812530264946,4.15071705839042);
\draw [->,shift={(4.982142282112744,1.8825401936775616)},line width=.8pt]  plot[domain=1.5848062958529314:2.6282437728220827,variable=\t]({1.*2.2754350013316795*cos(\t r)+0.*2.2754350013316795*sin(\t r)},{0.*2.2754350013316795*cos(\t r)+1.*2.2754350013316795*sin(\t r)});
\draw [->,shift={(2.9950278882874226,5.229888446850309)},line width=.8pt]  plot[domain=4.71461873473831:5.68161664111629,variable=\t]({1.*2.229893990146475*cos(\t r)+0.*2.229893990146475*sin(\t r)},{0.*2.229893990146475*cos(\t r)+1.*2.229893990146475*sin(\t r)});
\draw [->,shift={(-0.8384002308339037,2.112677591828245)},line width=.8pt]  plot[domain=0.4496796966608705:1.5276431464586627,variable=\t]({1.*2.041336930732865*cos(\t r)+0.*2.041336930732865*sin(\t r)},{0.*2.041336930732865*cos(\t r)+1.*2.041336930732865*sin(\t r)});
\draw [->,shift={(1.012838435363215,5.256826814013395)},line width=.8pt]  plot[domain=3.6538078194803316:4.706700330818788,variable=\t]({1.*2.256863330791748*cos(\t r)+0.*2.256863330791748*sin(\t r)},{0.*2.256863330791748*cos(\t r)+1.*2.256863330791748*sin(\t r)});
\draw [->,shift={(3.9912401861237243,0.13397459621556207)},line width=.8pt]  plot[domain=2.6239155065362585:3.6592698006433286,variable=\t]({1.*2.291492060578934*cos(\t r)+0.*2.291492060578934*sin(\t r)},{0.*2.291492060578934*cos(\t r)+1.*2.291492060578934*sin(\t r)});
\draw [->,shift={(0.1683484396646991,0.13397459621556054)},line width=.8pt]  plot[domain=-0.5543450149577236:0.554345014957724,variable=\t]({1.*2.154262245721488*cos(\t r)+0.*2.154262245721488*sin(\t r)},{0.*2.154262245721488*cos(\t r)+1.*2.154262245721488*sin(\t r)});
\begin{scriptsize}
\draw [fill=blue] (3.,3.) circle (2.5pt);
\draw[color=blue] (2.89,3.3) node {$1$};

\draw [fill=blue] (1.,3.) circle (2.5pt);
\draw[color=blue] (0.86,2.79) node {$5$};

\draw [fill=blue] (2.,1.2679491924311226) circle (2.5pt);
\draw[color=blue] (2.3,1.37) node {$9$};

\draw [fill=blue] (-0.9543812530264946,4.15071705839042) circle (2.5pt);
\draw[color=blue] (-1.1,4.39) node {$17$};

\draw [fill=blue] (2.,-1.) circle (2.5pt);
\draw[color=blue] (1.98,-1.23) node {$15$};

\draw [fill=blue] (4.950264550997168,4.157751888343405) circle (2.5pt);
\draw[color=blue] (5.2,4.23) node {$13$};

\draw[color=black] (3.6,3.85) node {$2$};
\draw[color=black] (0.07,2.94) node {$2$};
\draw[color=black] (2.47,0.56) node {$2$};
\end{scriptsize}
\end{tikzpicture}

\begin{tikzpicture}[line cap=round,line join=round,>=triangle 45,x=1.0cm,y=1.0cm]
\draw [line width=.8pt] (1.,3.)-- (3.,3.);
\draw [line width=.8pt] (3.,3.)-- (2.,1.2679491924311226);
\draw [line width=.8pt] (2.,1.2679491924311226)-- (1.,3.);
\draw [line width=.8pt] (-0.9543812530264946,4.15071705839042)-- (4.950264550997168,4.157751888343405);
\draw [line width=.8pt] (4.950264550997168,4.157751888343405)-- (2.,-1.);
\draw [line width=.8pt] (2.,-1.)-- (-0.9543812530264946,4.15071705839042);
\draw [->,shift={(4.982142282112744,1.8825401936775616)},line width=.8pt]  plot[domain=1.5848062958529314:2.6282437728220827,variable=\t]({1.*2.2754350013316795*cos(\t r)+0.*2.2754350013316795*sin(\t r)},{0.*2.2754350013316795*cos(\t r)+1.*2.2754350013316795*sin(\t r)});
\draw [->,shift={(2.9950278882874226,5.229888446850309)},line width=.8pt]  plot[domain=4.71461873473831:5.68161664111629,variable=\t]({1.*2.229893990146475*cos(\t r)+0.*2.229893990146475*sin(\t r)},{0.*2.229893990146475*cos(\t r)+1.*2.229893990146475*sin(\t r)});
\draw [->,shift={(-0.8384002308339037,2.112677591828245)},line width=.8pt]  plot[domain=0.4496796966608705:1.5276431464586627,variable=\t]({1.*2.041336930732865*cos(\t r)+0.*2.041336930732865*sin(\t r)},{0.*2.041336930732865*cos(\t r)+1.*2.041336930732865*sin(\t r)});
\draw [->,shift={(1.012838435363215,5.256826814013395)},line width=.8pt]  plot[domain=3.6538078194803316:4.706700330818788,variable=\t]({1.*2.256863330791748*cos(\t r)+0.*2.256863330791748*sin(\t r)},{0.*2.256863330791748*cos(\t r)+1.*2.256863330791748*sin(\t r)});
\draw [->,shift={(3.9912401861237243,0.13397459621556207)},line width=.8pt]  plot[domain=2.6239155065362585:3.5592698006433286,variable=\t]({1.*2.291492060578934*cos(\t r)+0.*2.291492060578934*sin(\t r)},{0.*2.291492060578934*cos(\t r)+1.*2.291492060578934*sin(\t r)});
\draw [->,shift={(0.1683484396646991,0.13397459621556054)},line width=.8pt]  plot[domain=-0.5543450149577236:0.554345014957724,variable=\t]({1.*2.154262245721488*cos(\t r)+0.*2.154262245721488*sin(\t r)},{0.*2.154262245721488*cos(\t r)+1.*2.154262245721488*sin(\t r)});
\begin{scriptsize}
\draw [fill=blue] (3.,3.) circle (2.5pt);
\draw[color=blue] (2.89,3.3) node {$4$};

\draw [fill=blue] (1.,3.) circle (2.5pt);
\draw[color=blue] (0.86,2.79) node {$8$};

\draw [fill=blue] (2.,1.2679491924311226) circle (2.5pt);
\draw[color=blue] (2.3,1.37) node {$12$};

\draw [fill=blue] (-0.9543812530264946,4.15071705839042) circle (2.5pt);
\draw[color=blue] (-1.1,4.39) node {$14$};

\draw [fill=blue] (2.,-1.) circle (2.5pt);
\draw[color=blue] (1.98,-1.23) node {$18$};

\draw [fill=blue] (4.950264550997168,4.157751888343405) circle (2.5pt);
\draw[color=blue] (5.16,4.33) node {$16$};

\draw[color=black] (2.02,3.21) node {$-1$};
\draw[color=black] (2.6,2.11) node {$-1$};
\draw[color=black] (1.3,2.15) node {$-1$};
\draw[color=black] (2.,4.41) node {$-1$};
\draw[color=black] (3.74,1.35) node {$-1$};
\draw[color=black] (0.26,1.55) node {$-1$};
\draw[color=black] (3.6,4.03) node {$2$};
\draw[color=black] (0.07,3.14) node {$2$};
\draw[color=black] (2.47,0.56) node {$2$};
\end{scriptsize}
\end{tikzpicture}
 \end{minipage}
 \begin{minipage}{.4\textwidth}
 \begin{tikzpicture}[line cap=round,line join=round,>=triangle 45,x=1.0cm,y=1.0cm,scale = .8]
\draw [shift={(-0.155,-5.927684547533456)},line width=.8pt,->]  plot[domain=1.0471975511965976:2.0243951023931953,variable=\t]({1.*5.69*cos(\t r)+0.*5.69*sin(\t r)},{0.*5.69*cos(\t r)+1.*5.69*sin(\t r)});
\draw [shift={(-0.155,3.927684547533456)},line width=.8pt,-> ]  plot[domain=4.1887902047863905:5.235987755982988,variable=\t]({1.*5.69*cos(\t r)+0.*5.69*sin(\t r)},{0.*5.69*cos(\t r)+1.*5.69*sin(\t r)});
\draw [shift={(5.535,3.9276845475334534)},line width=.8pt,->]  plot[domain=3.1415926535897927:4.12879020478639,variable=\t]({1.*5.69*cos(\t r)+0.*5.69*sin(\t r)},{0.*5.69*cos(\t r)+1.*5.69*sin(\t r)});
\draw [shift={(-3.,-1.)},line width=.8pt,->]  plot[domain=0.:1.0471975511965974,variable=\t]({1.*5.69*cos(\t r)+0.*5.69*sin(\t r)},{0.*5.69*cos(\t r)+1.*5.69*sin(\t r)});
\draw [shift={(-5.845,3.9276845475334587)},line width=.8pt,->]  plot[domain=-1.0471975511965983:-0.06,variable=\t]({1.*5.69*cos(\t r)+0.*5.69*sin(\t r)},{0.*5.69*cos(\t r)+1.*5.69*sin(\t r)});
\draw [shift={(2.69,-1.)},line width=.8pt,->]  plot[domain=2.0943951023931953:3.141592653589793,variable=\t]({1.*5.69*cos(\t r)+0.*5.69*sin(\t r)},{0.*5.69*cos(\t r)+1.*5.69*sin(\t r)});
\begin{scriptsize}
\draw [fill=blue] (-3.,-1.) circle (2.5pt);
\draw[color=blue] (-3.38,-1.15) node {$7$};
\draw [fill=blue] (2.69,-1.) circle (2.5pt);
\draw[color=blue] (2.96,-1.07) node {$11$};
\draw [fill=blue] (-0.155,3.927684547533456) circle (2.5pt);
\draw[color=blue] (-0.22,4.37) node {$3$};
\draw[color=black] (-0.22,-0.37) node {$-i$};
\draw[color=black] (-0.16,-1.93) node {$i$};
\draw[color=black] (0.78,1.39) node {$-i$};
\draw[color=black] (2.1,2.15) node {$i$};
\draw[color=black] (-1.02,1.47) node {$-i$};
\draw[color=black] (-2.47,2.36) node {$i$};
\end{scriptsize}
\end{tikzpicture}

\begin{tikzpicture}[line cap=round,line join=round,>=triangle 45,x=1.0cm,y=1.0cm,scale = .8]
\draw [shift={(-0.155,-5.927684547533456)},line width=.8pt,->]  plot[domain=1.0471975511965976:2.0243951023931953,variable=\t]({1.*5.69*cos(\t r)+0.*5.69*sin(\t r)},{0.*5.69*cos(\t r)+1.*5.69*sin(\t r)});
\draw [shift={(-0.155,3.927684547533456)},line width=.8pt,-> ]  plot[domain=4.1887902047863905:5.235987755982988,variable=\t]({1.*5.69*cos(\t r)+0.*5.69*sin(\t r)},{0.*5.69*cos(\t r)+1.*5.69*sin(\t r)});
\draw [shift={(5.535,3.9276845475334534)},line width=.8pt,->]  plot[domain=3.1415926535897927:4.12879020478639,variable=\t]({1.*5.69*cos(\t r)+0.*5.69*sin(\t r)},{0.*5.69*cos(\t r)+1.*5.69*sin(\t r)});
\draw [shift={(-3.,-1.)},line width=.8pt,->]  plot[domain=0.:1.0471975511965974,variable=\t]({1.*5.69*cos(\t r)+0.*5.69*sin(\t r)},{0.*5.69*cos(\t r)+1.*5.69*sin(\t r)});
\draw [shift={(-5.845,3.9276845475334587)},line width=.8pt,->]  plot[domain=-1.0471975511965983:-0.06,variable=\t]({1.*5.69*cos(\t r)+0.*5.69*sin(\t r)},{0.*5.69*cos(\t r)+1.*5.69*sin(\t r)});
\draw [shift={(2.69,-1.)},line width=.8pt,->]  plot[domain=2.0943951023931953:3.141592653589793,variable=\t]({1.*5.69*cos(\t r)+0.*5.69*sin(\t r)},{0.*5.69*cos(\t r)+1.*5.69*sin(\t r)});
\begin{scriptsize}
\draw [fill=blue] (-3.,-1.) circle (2.5pt);
\draw[color=blue] (-3.38,-1.15) node {$10$};
\draw [fill=blue] (2.69,-1.) circle (2.5pt);
\draw[color=blue] (2.96,-1.07) node {$2$};
\draw [fill=blue] (-0.155,3.927684547533456) circle (2.5pt);
\draw[color=blue] (-0.22,4.37) node {$6$};
\draw[color=black] (-0.22,-0.37) node {$-i$};
\draw[color=black] (-0.16,-1.93) node {$i$};
\draw[color=black] (0.78,1.39) node {$-i$};
\draw[color=black] (2.1,2.15) node {$i$};
\draw[color=black] (-1.02,1.47) node {$-i$};
\draw[color=black] (-2.47,2.36) node {$i$};
\end{scriptsize}
\end{tikzpicture}

 \end{minipage}
 }
 \end{center}
 \caption{The decomposed graph $G$ after the second round using $\psi_0 = (1,4,7,10)$ $(2,5,8,11)(3,6,9,12)(13,16)(14,17)(15,18)$.The weights of unidirectional and bidirectional edges are equal to one unless otherwise stated. }\label{fig:afterround2}
 \end{figure}
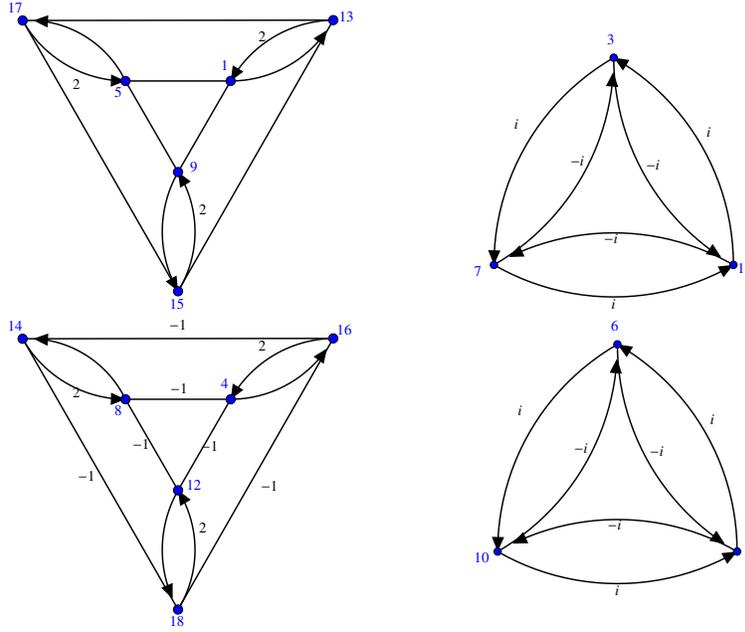


\noindent\emph{Step C}: Now we have exhausted the decompositions that can be accomplished using $p_0 = 2$.  We move to the prime $p_1 = 3$, and notice that
\[
1 = 1 \cdot 4 + (-1)\cdot 3.
\]
Thus we now use the automorphism
\[
\phi_{1}= \tp = \phi_0^{(1+3)}= (1,5,9)(2,6,10)(3,7,11)(4,8,12)(13,17,15)(14,18,16).
\]
\begin{center}
\emph{Round II}
\end{center}
\noindent\emph{Step A}: Here $\ell_2=1$ and $\psi_1=\phi_1^1$. We begin with the matrix $M_0 $ equal to $M_2$ from the previous round. \\
\noindent \emph{Step B}: On this step we only need to run through the above algorithm once since the order of $\phi_1$ is three. It is worth mentioning that during this step we do not need to be worried about how we choose the transversal for this decomposition because this is the final step and the transversal only needs to be carefully chosen to guarantee that the resulting decomposed graph contains a symmetry for the next round. We choose the transversal $\cT_0 = \{1,2,3,4,13,14\}$, thus
\[
\cT_0 = \{1,13,4,14, 2, 3\}, \ \cT_1 = \{5,17,8,18,6,7\}, \text{ and }
\cT_2 = \{9,15,12,16, 10,11\}.
\]  The relevant block matrices for this decomposition are
\[
\begin{split}
C_0 = & \left[\begin{array}{rr|rr|r|r}
0 &  1 & 0 & 0 & 0 & 0 \\
2 &  0 & 0 & 0 & 0 & 0\\\hline
0 &  0 & 0 & 0 & 0 & 0 \\
0 &  0 & 0 & 0 & 0 & 0\\\hline
 0 & 0& 0 & 0& 0 & 0\\\hline
  0 & 0& 0 & 0& 0 & 0
\end{array}\right], \
C_1 = \left[\begin{array}{rr|rr|r|r}
1 & 0 & 0 & 0 & 0 & 0 \\
0 & 1 & 0 & 0 & 0 & 0 \\\hline
0 & 0 & -1 & 0 & 0 & 0 \\
0 & 0 & 2 & -1 & 0 & 0 \\\hline
0  & 0 & 0 & 0 & i & 0\\\hline
0  & 0 & 0 & 0& 0 & i \\
\end{array}\right], \\&
\qquad C_2 = \left[\begin{array}{rr|rr|r|r}
1 & 0 & 0 & 0 & 0 & 0 \\
0 & 1 & 0 & 0 & 0 & 0 \\\hline
0 & 0 & -1 & 1& 0 & 0  \\
0 & 0 & 0 & -1& 0 & 0  \\\hline
0 & 0 & 0 & 0& -i & 0  \\\hline
0 & 0 & 0 & 0& 0 & -i  \\
\end{array}\right].
\end{split}
\]
So, setting $\omega = e^{\frac{2 \pi i }{3}}$ the final matrix decomposition is $M_1 = \tM_1 \oplus \hM_1$ with
\[
 \begin{array}{c}
\tM_1 =\left[
\begin{array}{rr}2 & 1 \\ 2 & 2\end{array}\right] \oplus
\left[ \begin{array}{rr} -2 & 1 \\ 2 & -2 \end{array}\right]  \oplus [0] \oplus [ 0] \\[4mm]
\hM_1 =
 \left[\begin{array}{rr}-1& 1 \\ 2 & -1\end{array}\right]\oplus
\left[ \begin{array}{rr} 1 & \omega^2 \\ 2\omega &1 \end{array} \right] \oplus [ \sqrt{3}]\oplus[ \sqrt{3} ]\oplus\cdots \\[4mm]
 \qquad \qquad \left[\begin{array}{rr}-1& 1 \\ 2 & -1\end{array}\right]\oplus
\left[\begin{array}{rr} 1 & \omega \\ 2\omega^2 &1 \end{array} \right] \oplus[- \sqrt{3}]\oplus[ -\sqrt{3} ]
\end{array}
\]


\noindent \emph{Step C}: There is no need to find $\phi_2$ since we cannot decompose this matrix any further.\\[2mm]

\noindent Notice that the first $2 \times 2$ matrix appearing in the presentation of $\tM_1$ above 
 is precisely the divisor matrix associated with the original automorphism $\phi$.

\end{example}


\section{Spectral Radius of the divisor Matrix}\label{sec:extapp}
One particularly useful spectral property is the spectral radius associated with the graph structure $G$ of a network. The \emph{spectral radius}\index{spectral radius} of a matrix $M$ associated with $G$ is given by
\[\rho(M)=\max\{|\lambda|:\lambda\in\sigma(M)\}.\]
The spectral radius $\rho(M)$ of a network, {or more generally a dynamical system}, is particularly important for studying the system's dynamics. For instance, the matrix $M$ associated with a network may be a global or local linearization of the system of equations that govern the network's dynamics. If the network's dynamics are modeled by a discrete-time system, then stability of the system is guaranteed if $\rho(M)<1$ and local instability results when $\rho(M)>1$ \cite{HA03}.

From the theory of equitable partitions it follows that $\sigma(M_{\phi})\subset\sigma(M)$. In fact, it was shown in \cite{FSSW} that the spectral radius $\rho(M)$ of $M$ is an eigenvalue of $M_{\phi}$ if $\phi$ is separable and the matrix $M$ is nonnegative and irreducible. 
In this section we will generalize the result to to show that the spectral radius $\rho(M)$ of an automorphism compatible matrix $M$ is the same as the spectral radius of the divisor matrix $M_\phi$ for any automorphism $\phi$ if $M$ is both nonnegative and irreducible.

\begin{prop}\label{lem:3}\textbf{(Spectral Radius of a General Equitable Partition)}
Let $\phi$ be any automorphism of a graph $G$ with $M$ an automorphism compatible matrix. If $M$ is nonnegative and irreducible, then $\rho(M)=\rho(M_{\phi})$.
\end{prop}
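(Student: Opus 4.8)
The plan is to exploit the Perron--Frobenius theorem together with the fact, established in Theorem~\ref{thm:gendecomp}, that $\sigma(M_\phi)\subseteq\sigma(M)$. Since $M$ is nonnegative and irreducible, Perron--Frobenius guarantees that $\rho(M)$ is a (simple) eigenvalue of $M$, called the Perron eigenvalue, and that it has a positive eigenvector $v$, i.e.\ $Mv=\rho(M)v$ with $v>0$ entrywise. Because $\sigma(M_\phi)\subseteq\sigma(M)$, every eigenvalue of $M_\phi$ is an eigenvalue of $M$, hence has modulus at most $\rho(M)$; this gives the easy inequality $\rho(M_\phi)\le\rho(M)$. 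The substance of the argument is the reverse inequality, for which it suffices to produce an eigenvector of $M_\phi$ with eigenvalue $\rho(M)$.

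First I would recall how $M_\phi$ acts. Writing the orbits of $\phi$ as $V_1,\dots,V_k$, the divisor matrix satisfies $(M_\phi)_{ab}=\sum_{t\in V_b}M_{st}$ for any $s\in V_a$ (Definition~\ref{def:EP}); the content of automorphism compatibility is precisely that this sum does not depend on the choice of $s\in V_a$, so $M_\phi$ is well defined. The natural candidate eigenvector is obtained by averaging the Perron vector $v$ over orbits: set $w_a = \frac{1}{|V_a|}\sum_{s\in V_a} v_s$, or equivalently (and more conveniently) $w_a = v_{s}$ for \emph{any} $s\in V_a$ --- and here is the key point I must justify: the Perron vector $v$ is constant on the orbits of $\phi$. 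This follows from uniqueness of the Perron vector: if $P_\phi$ is the permutation matrix of $\phi$, then since $M$ is automorphism compatible we have $P_\phi M P_\phi^{-1}=M$, so $P_\phi v$ is also a positive eigenvector of $M$ for $\rho(M)$; by simplicity of the Perron eigenvalue, $P_\phi v = v$, i.e.\ $v$ is $\phi$-invariant, hence constant on each orbit $V_a$. Define $w\in\mathbb{R}^k$ by $w_a = v_s$ for $s\in V_a$; then $w>0$.

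Next I would verify $M_\phi w = \rho(M) w$ by a direct computation: for any $a$, pick $s\in V_a$, and then
\[
(M_\phi w)_a = \sum_{b=1}^{k}(M_\phi)_{ab}w_b = \sum_{b=1}^{k}\Big(\sum_{t\in V_b}M_{st}\Big)w_b = \sum_{b=1}^{k}\sum_{t\in V_b}M_{st}v_t = \sum_{t=1}^{n}M_{st}v_t = (Mv)_s = \rho(M)v_s = \rho(M)w_a,
\]
using that $w_b=v_t$ for all $t\in V_b$. Thus $\rho(M)\in\sigma(M_\phi)$, whence $\rho(M_\phi)\ge\rho(M)$. Combined with the easy inequality above, $\rho(M_\phi)=\rho(M)$, as claimed.

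The main obstacle --- really the only nontrivial step --- is the claim that the Perron eigenvector is constant on orbits of $\phi$; everything else is bookkeeping. That step rests on two facts that I would make sure to state cleanly: (i) automorphism compatibility of $M$ is exactly the statement $P_\phi M = M P_\phi$, so $\phi$ commutes with $M$ as a linear map; and (ii) irreducibility and nonnegativity of $M$ give, via Perron--Frobenius, a \emph{one-dimensional} Perron eigenspace spanned by a strictly positive vector, which forces any $M$-equivariant symmetry fixing the Perron eigenspace to fix its positive generator. One should note that no assumption on $\phi$ beyond being an automorphism is needed, so this genuinely extends the basic/separable cases of \cite{FSSW}; and indeed the argument never invokes the equitable \emph{decomposition} machinery at all, only $\sigma(M_\phi)\subseteq\sigma(M)$ (which for the orbit partition is classical) together with Perron--Frobenius.
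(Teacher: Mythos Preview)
Your proof is correct, and it takes a genuinely different route from the paper's. The paper proves the result inductively through the decomposition algorithm: at each stage of the prime-power decomposition it shows that $\rho(M)=\rho(\tilde M)$ by bounding $\rho(B_j)\le\rho(B_0)\le\rho(\tilde M)$ (using Lemma~\ref{lem:bc} on block-circulant matrices and comparison results for nonnegative matrices), and then checks that nonnegativity and irreducibility of $\tilde M$ are preserved so that the argument can be iterated all the way down to $M_\phi$. Your argument bypasses the decomposition entirely: you use Perron--Frobenius once on $M$, exploit the commutation $P_\phi M = MP_\phi$ to see that the Perron vector is $\phi$-invariant (hence constant on orbits), and read off directly that the induced orbit vector is a positive eigenvector of $M_\phi$ with eigenvalue $\rho(M)$. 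One small point worth making explicit: simplicity of the Perron eigenvalue gives $P_\phi v = c v$ for some scalar $c$, and you should note that $c=1$ because $P_\phi$ is a permutation (e.g.\ it preserves the $\ell^1$-norm, or the maximum entry).

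What each approach buys: yours is shorter, more conceptual, and independent of the paper's algorithm --- it really only needs that the orbit partition is equitable and that $M$ is automorphism compatible, so it would apply verbatim to any automorphism-induced equitable partition without ever mentioning prime powers. The paper's approach, by contrast, is tied to the step-by-step decomposition and, as a byproduct, yields the finer information in \eqref{eq:rho} that each intermediate block $B_j$ already satisfies $\rho(B_j)\le\rho(\tilde M)$; it also establishes along the way that every intermediate $\tilde M$ remains nonnegative and irreducible, which may be of independent interest for the algorithmic viewpoint of Sections~\ref{sec:pp}--\ref{GED}.
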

Before we can prove the proposition we need the following lemma.
\begin{lem}\label{lem:bc}
If an irreducible, nonnegative matrix $A$ has block circulant form
$$A=\left[\begin{matrix}
A_1 & A_2 & A_3 & \dots & A_n \\
A_n & A_1 & A_2 & \dots & A_{n-1}\\
A_{n-1} & A_n & A_1 & \dots & A_{n-2}\\
\vdots & \vdots & \vdots & \ddots & \vdots\\
A_2 & A_3 & A_4 & \dots & A_1\end{matrix}\right]\text{ and   } B=\sum_{m=1}^n A_m, $$
then $\rho(A)=\rho(B)$.
\end{lem}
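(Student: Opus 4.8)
The plan is to diagonalize the block-circulant structure using roots of unity, exactly as in the proof of Lemma~\ref{lem:Dallas}, and then exploit Perron--Frobenius theory. Let $\omega = e^{2\pi i/n}$ and form the block-Vandermonde matrix $V$ whose $(j,k)$ block is $\omega^{jk} I_r$ (for $j,k = 0, 1, \ldots, n-1$), where $r$ is the common block size. A direct computation, identical to the one carried out for $S^*CS$ in Lemma~\ref{lem:Dallas}, shows that $\frac{1}{n}V^* A V$ is block diagonal with diagonal blocks $B^{(0)}, B^{(1)}, \ldots, B^{(n-1)}$ where $B^{(k)} = \sum_{m=1}^{n} \omega^{k(m-1)} A_m$. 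In particular $B^{(0)} = \sum_{m=1}^n A_m = B$, and $\sigma(A) = \bigcup_{k=0}^{n-1}\sigma(B^{(k)})$. Hence $\rho(A) = \max_k \rho(B^{(k)})$, and since $B^{(0)} = B$ appears among these blocks we immediately get $\rho(A) \geq \rho(B)$.

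The heart of the matter is the reverse inequality $\rho(B^{(k)}) \leq \rho(B)$ for every $k$. For this I would compare entrywise: since $|\omega^{k(m-1)}| = 1$ and each $A_m$ is nonnegative, the triangle inequality gives $|B^{(k)}_{s,t}| \leq \sum_{m=1}^n (A_m)_{s,t} = B_{s,t}$, i.e. $B^{(k)}$ is entrywise dominated in modulus by $B$. A standard consequence of Perron--Frobenius theory (e.g. a corollary of the fact that for nonnegative $P \geq |Q|$ entrywise one has $\rho(Q) \leq \rho(P)$; see Horn--Johnson, \emph{Matrix Analysis}, Theorem 8.1.18) then yields $\rho(B^{(k)}) \leq \rho(B)$. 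Combining, $\rho(A) = \rho(B^{(0)}) = \rho(B)$.

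The one place requiring care is where the irreducibility hypothesis on $A$ is actually used. Irreducibility of $A$ guarantees $\rho(A) > 0$ (unless $A$ is the $1\times 1$ zero matrix, a trivial degenerate case) and that $\rho(A)$ is a genuine eigenvalue of $A$ with a positive eigenvector; more to the point, it is needed to ensure $B$ itself is irreducible (or at least that $\rho(B) > 0$), so that the statement ``$\rho(M) = \rho(M_\phi)$'' in Proposition~\ref{lem:3} is nonvacuous and $\rho(B)$ is attained. I expect the main obstacle to be purely expository rather than mathematical: stating cleanly the entrywise-domination comparison for spectral radii and pinning down exactly which form of the Perron--Frobenius comparison theorem is invoked, so that the argument does not secretly require $B^{(k)}$ to be nonnegative (it is complex in general). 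The block-circulant diagonalization itself is routine and can be cited almost verbatim from the computation already done in Lemma~\ref{lem:Dallas}.
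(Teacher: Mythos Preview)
Your argument is correct, but it proceeds quite differently from the paper's own proof. The paper never block-diagonalizes $A$; instead it applies Perron--Frobenius directly to $B$ to obtain a positive Perron eigenvector $\mathbf{v}$, stacks $n$ copies of $\mathbf{v}$ to form $\mathbf{w}=\mathbf{v}\oplus\cdots\oplus\mathbf{v}$, checks by hand that $A\mathbf{w}=\rho(B)\mathbf{w}$, and then invokes the uniqueness of the positive eigenvector of an irreducible nonnegative matrix to conclude $\rho(A)=\rho(B)$. Your route---full block-Fourier diagonalization followed by the entrywise comparison $|B^{(k)}|\le B$ and Theorem~8.1.18 of Horn--Johnson---is the same mechanism the paper uses \emph{later}, inside the proof of Proposition~\ref{lem:3}, to bound $\rho(B_j)\le\rho(B_0)$. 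So you have in effect folded that later step into the lemma itself. One payoff of your approach is that irreducibility of $A$ is not actually needed for the conclusion $\rho(A)=\rho(B)$: the domination inequality $\rho(Q)\le\rho(P)$ for $|Q|\le P$ requires only that $P$ be nonnegative, so your proof establishes the lemma for all nonnegative block-circulant $A$. The paper's eigenvector argument, by contrast, leans on irreducibility of both $A$ and $B$ to invoke the full Perron--Frobenius uniqueness statement, but is shorter and avoids writing down the diagonalization explicitly.
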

\begin{proof}
Because A is nonnegative and irreducible, then $B$ must also be nonnegative and irreducible.  Thus the Perron-Frobenius Theorem guarantees that the spectral radius of B, $\rho(B)$, is a positive eigenvalue of $B$.  It also guarantees that  the eigenvector $\mathbf{v}$ associated to $\rho(B)$ can be chosen to have all positive entries.  Now consider the vector $\mathbf{w}=\mathbf{v}\oplus\mathbf{v}\oplus\dots \oplus\mathbf{v}$ (a total of $m$ $\mathbf{v}$'s in the direct sum).  We can see that $\mathbf{w}$ is an eigenvector of $A$ since $$A\mathbf{w}=\left[\begin{matrix}
A_1 & A_2 & A_3 & \dots & A_n \\
A_n & A_1 & A_2 & \dots & A_{n-1}\\
A_{n-1} & A_n & A_1 & \dots & A_{n-2}\\
\vdots & \vdots & \vdots & \ddots & \vdots\\
A_2 & A_3 & A_4 & \dots & A_1\end{matrix}\right]\left[\begin{matrix} \mathbf{v} \\ \mathbf{v}\\ \mathbf{v}\\ \vdots \\ \mathbf{v}\end{matrix}\right]=\left[\begin{matrix} \sum A_m \mathbf{v} \\\sum A_m  \mathbf{v}\\ \sum A_m \mathbf{v}\\ \vdots \\ \sum A_m \mathbf{v}\end{matrix}\right]=\left[\begin{matrix} \rho(N) \mathbf{v} \\\rho(N)  \mathbf{v}\\\rho(N) \mathbf{v}\\ \vdots \\ \rho(N) \mathbf{v}\end{matrix}\right]=\rho(N) \mathbf{w}.$$  Thus $\mathbf{w}$ is an eigenvector of $A$ with only positive entries and with eigenvalue $\rho(B)$.  Because $A$ is irreducible and nonnegative, the Perron-Frobenius Theorem tells us the only eigenvector of $A$ that is all positive must correspond the the largest eigenvalue, which is the spectral radius.  Therefore we conclude that $\rho(B)=\rho(A).$
\end{proof}
Now we prove Proposition \ref{lem:3}.
\begin{proof}

Suppose we have the matrix $M$ which is irreducible and nonnegative with an associated automorphism $\phi$. Using the process outlined in the algorithm found in Section \ref{GED}, we can decompose $M$ through a process of similarity transformations.  Using the notation found in \ref{lem:Dallas},  we will first show after the similarity transformation in this lemma that $\rho(M)=\rho(\tilde{M})$.  To equitably decompose a matrix completely, we repeatedly do this similarity transformation. At the final step, $\tilde{M}=M_\phi$, the divisor matrix associated with $\phi$.

In order to perform the similarity transformation, we first need to reorder the rows and columns of $M$ as prescribed in Proposition \ref{prop:primepower} and construct the $T$ matrix. From Lemma \ref{lem:Dallas}, we have

 \[
T^{-1} M T = \tilde{M}
\oplus B_1 \oplus B_2 \oplus \cdots \oplus B_{p^N-p^{N-1}},
\]
where $\tilde{M}=\left[\begin{array}{rr} F & kH \\ L & B_0 \end{array}\right]$. First we will prove the claim that $\rho(\tilde{M})\geq\rho(B_j)$ for $1\leq j\leq p^{N}-p^{N-1}.$

To begin, we use Corollary 8.1.20 in \cite{Horn85} which states for a nonnegative matrix $Q$, if $P$ is a principal submatrix of $Q$ then $\rho(P)\leq\rho(Q)$. Since $M$ is nonnegative, and $B_0=\sum_{m=0}^{p-1}D_m$, we know $\tilde{M}$ is nonnegative.  Because $B_0$ is a principal submatrix of $\tilde{M}$, using the corollary we conclude that $\rho(B_0)\leq\rho(\tilde{M})$.

Next we must show that $\rho(B_j)\leq\rho(B_0)$ for $1\leq j \leq p^N-p^{N-1}$.
Because
$$\left| {{B_j}} \right| = \left| {\sum\limits_{m = 0}^{p^N - 1} {{{\left( {{\omega ^{m\gamma_j}}} \right)}}{C_m}} } \right| \leq \sum\limits_{m = 0}^{p^N - 1} {\left| {\left( {{\omega ^{m\gamma_j}}} \right){C_m}} \right|}  = \sum\limits_{m = 0}^{p^N - 1} {{C_m}}, $$
we can use Theorem 8.1.18 in \cite{Horn85} to conclude that $$\rho(B_j)\leq\rho\left(\sum_{m=1}^{p^N-1}C_m\right)$$ for all $1\leq j\leq p^N-p^{N-1}$. We note that $$B_0=\sum_{i=0}^{p-1}D_i=
\left[\begin{matrix}
\sum_{t=0}^{p-1}C_{tp^{N-1}} & \sum_{t=0}^{p-1}C_{tp^{N-1}+1} & \dots & \sum_{t=0}^{p-1}C_{tp^{N-1}+p^N-1}\\
\sum_{t=0}^{p-1}C_{tp^{N-1}+p^N-1} & \sum_{t=0}^{p-1}C_{tp^{N-1}} & \dots & \sum_{t=0}^{p-1}C_{tp^{N-1}+p^N-2}\\
\vdots & \vdots & \ddots & \vdots\\
\sum_{t=0}^{p-1}C_{tp^{N-1}+1} & \sum_{t=0}^{p-1}C_{tp^{N-1}+2} & \dots & \sum_{t=0}^{p-1}C_{tp^{N-1}},
\end{matrix}\right]$$ which has block-circulant form.  Now we apply Lemma \ref{lem:bc}, which shows that $\rho(B_0)=\rho\left(\sum C_m\right)$. Therefore,
\begin{equation}\label{eq:rho}
\rho(B_j)\leq\rho(B_0)\leq\rho(\tilde{M}) \ \text{for all} \ 1\leq j\leq p^N-p^{N-1},
\end{equation}
which verifies our claim.

Using this claim and the fact that $\sigma(M)=\sigma(\tilde{M})\cup\sigma(B_1)\cup\dots\cup\sigma(B_{p^N-p^{N-1}})$ we can immediately conclude that $\rho(M)=\rho(\tilde{M})$.

The above argument is for a single step in the prime-power decomposition. To completely equitably decompose a matrix we are required to do a sequence of similarity transforms as described in section \ref{GED}.  Each similarity transform breaks the matrix into $M_i=\tilde{M}_i\oplus \hat{M}_i$ in the notation of Theorem \ref{thm:fullprimedecomp}. We just showed above that largest eigenvalue of $M_i$ is also the largest eigenvalue of $\tilde{M_i}$. However, in order to apply the above argument on each $M_i$, we just need to prove that if $M$ is nonnegative and irreducible then $\tilde{M}$ is also irreducible and nonnegative. If $M$ is nonnegative, $\tilde{M}$ is built from elements of $M$ and sums of elements from $M$ (see equation \ref{eq:mtilde}), thus $\tilde{M}$ is also nonnegative.  Also if $M$ is irreducible, then we claim $\tilde{M}$ must also be irreducible. This fact is proven in the proof of Proposition 4.3 in \cite{FSSW}.

If $\phi$ has order $p_1^{N_1} p_2^{N_2}\dots p_h^{N_h}$, then Proposition \ref{prop:dd} shows that it is possible to decompose the matrix using a sequence of automorphisms that induce a sequence of equitable decompositions on $M$. By induction each subsequent decomposition results in a nonnegative divisor matrix $(\dots(M_{\psi_0})_{\psi_1}\dots)_{\psi_i}$ for $i\leq h$ with the same spectral radius $r=\rho(M)$ implying that $\rho(M)$ must be the largest eigenvalue for $M_\phi$ for any $\phi\in Aut(G).$
\end{proof}

It is worth noting that many matrices typically associated with real networks are both nonnegative and irreducible. This includes the adjacency matrix as well as other weighted matrices \cite{Newman10}; although, there are some notable exceptions, including Laplacian matrices. Moreover, when analyzing the stability of a network, a linearization $M$ of the network's dynamics inherits the symmetries of the network's structure. Hence, if a symmetry of the network's structure is known then this symmetry can be used to decompose $M$ into a smaller divisor matrix $M_{\phi}$. As $M$ and $M_{\phi}$ have the same spectral radius, under the conditions stated in Proposition \ref{lem:3}, then one can use the smaller matrix $M_{\phi}$ to either calculate or estimate the spectral radius of the original unreduced network as is demonstrated in the following example.

\begin{example}
Returning to Example \ref{ex:18}, we can calculate that the eigenvalues of the graph's adjacency matrix $M$ in this example are
\[
\sigma(M)=\{ -2 \pm \sqrt{2}, 2 \pm \sqrt{2}, \pm1 \pm \sqrt{2}, 0 \}
\]
not including multiplicities. Similarly, we can compute that the eigenvalues of $M_{\phi}$ are
\[
\sigma(M_\phi)=\{2 + \sqrt{2}, 2 - \sqrt{2}\}.
\]
Thus, $\rho(M)=\rho(M_\phi)=2 + \sqrt{2}$.
 \end{example}
\section{Conclusion}\label{sec:conc}

The theory introduced in this paper extends the previous theory of equitable decompositions allowing one to decompose an automorphism compatible matrix over any of the associated graph's automorphisms. The result is a number of smaller matrices (or, equivalently, graphs) whose collective eigenvalues are the same as those associated with the original matrix (or graph). We note that the only restriction on equitably decomposing a matrix over an automorphism is that the matrix is automorphism compatible. That is, the matrix needs to respect the structure of the graph's group of automorphisms. Automorphism compatible matrices include the graph's weighted adjacency matrix, various Laplacian matrices, distance matrix, etc. Hence, a large number of matrices that are typically associated with a graph can be decomposed over any of the graph's automorphisms.

With this in mind, it is worth mentioning that although an equitable decomposition can be performed with respect to any automorphism there are equitable partitions that do not correspond to any graph automorphism. It is currently unknown whether there are any classes of matrices that can be decomposed with respect to these nonautomorphism based partitions.

Additionally, there are a few open questions regarding the algorithms that are introduced in this paper. For instance, in each of the algorithms given here there is a certain amount of freedom in how we perform an equitable decomposition (i.e., how we choose transversals at different stages in these decompositions). It is unknown to what extent the resulting equitable decomposition depends on the choice of transversals. It is likewise unknown how these different choices affect the computational complexity of these algorithms.

Finally, in a previous paper on equitable decompositions \cite{FSSW} it was shown that the eigenvalue approximation of Gershgorin improves as a matrix is equitably decomposed so long as the matrix is decomposed over either a basic or separable automorphism. It is an open question whether the same is true for the general equitable decompositions introduced here. Moreover, it is unknown if the related eigenvalue approximations of Brauer, Brualdi, and Varga improve under the process of equitable decomposition (see, for instance, \cite{Horn85,Varga}).

\section*{Acknowledgments}
This work was partially supported by the DoD grant HDTRA1-15- 0049.}

\bibliography{references}{}
\bibliographystyle{unsrt} 

\end{document}